      \newtheorem{theorem}{Theorem}[section]
      \newtheorem{remark}[theorem]{Remark}
       \newtheorem{corollary}[theorem]{Corollary}
      \newtheorem{lemma}[theorem]{Lemma}
      \newtheorem{convention}[theorem]{Convention}
      \def\N{{\mathbb N}}
      \def\C{{\mathbb C}}
      \def\cK{\mathcal K}
      \def\cM{\mathcal M}
      \def\cA{\mathcal A}
      \def\cB{\mathcal B}
      \def\cC{\mathcal C}
      \def\cE{\mathcal E}
      \def\cS{\mathcal S}
      \def\cK{\mathcal K}
      \def\tti{\mathtt i}
      \def\bb1{\mathbb 1}
      \def\fT{\mathfrak T}
\newcommand{\df}[1]{{\bf{#1}}{\index{#1}}}
\newcommand{\trace}{\operatorname{trace}}
\newcommand{\range}{\operatorname{Range}}
\title[$C^*$-extreme UEB maps]{$C^*$-extreme entanglement breaking maps on operator systems}
\author[S. Balasubramanian]{Sriram Balasubramanian${}^*$}
\address{Department of Mathematics\\
IIT Madras, Chennai - 600036, India.}
\email{bsriram@iitm.ac.in, bsriram80@yahoo.co.in}
\thanks{${}^*$ Supported by the grant MTR/2018/000113 from the Department of Science and Technology (DST), Govt. of India.}
\author[N. Hotwani]{Neha Hotwani${}^1$}
\address{Department of Mathematics\\
IIT Madras, Chennai - 600036, India.}
\email{ma18d016@smail.iitm.ac.in}
\thanks{${}^1$ Supported by the fellowship 0203/16(8)/2018-R\&D-II from the National Board for Higher Mathematics (NBHM),
 Govt. of India.}
\subjclass[2020]{81P40, 47L07 (Primary); 15B48, 81R15, 81P42, 81P45 (Secondary)}
\keywords{Operator systems, Entanglement breaking maps, Maximal, Dilation, Extension, Mapping cone, $C^*$-convexity, $C^*$-extreme, Extremal, Krein-Milman, Schmidt number, EB rank, Choi rank.}
\numberwithin{equation}{section}
\begin{document}

\begin{abstract}
Let $\cE$ denote the set of all unital entanglement breaking (UEB) linear maps 
defined on an operator system $\cS \subset M_d$ and, 
mapping into $M_n$. As it 
turns out, the set $\cE$ is not only convex in the classical 
sense but also in a quantum sense, namely it is 
$C^*$-convex. 
The main objective of this article is to describe the 
$C^*$-extreme points of this set $\cE$. By observing that every 
EB map defined on the operator system $\cS$ 
dilates to a positive map with 
commutative range and also extends to an EB map on $M_d$, we
 show that the $C^*$-extreme points of the set $\cE$ are precisely the 
UEB maps that are maximal in the sense 
of Arveson (\cite{A} and \cite{A69}) and that they are also 
exactly the linear extreme points of the set $\cE$ with commutative 
range. We also determine 
their explicit structure, thereby obtaining operator system 
generalizations of the analogous structure theorem and 
the Krein-Milman type theorem given in \cite{BDMS}. As a 
consequence,  
 we show that $C^*$-extreme (UEB) maps in $\cE$ 
extend to $C^*$-extreme UEB maps on the full algebra. 
Finally, we obtain an improved version of the main result in 
\cite{BDMS}, which contains various characterizations of 
$C^*$-extreme UEB maps between the algebras 
$M_d$ and $M_n$. 
\end{abstract}

\maketitle

%

\section{Introduction}
\label{sec:intro}
The notions of positivity and convexity are fundamental to Mathematical analysis and 
in particular, to the theory of $C^*$-algebras. Among positive maps between $C^*$-algebras, the ones 
that are completely so, are of considerable interest. 
The study of completely positive maps was initiated by Stinespring and Arveson
 in the seminal papers \cite{S}, \cite{A69} and \cite{A72}.
 Among various 
results of significant importance in \cite{A69}, of 
particular interest to us is 
an abstract characterization of the (linear) extreme points of the convex set of completely 
positive maps between a $C^*$-algebra $\mathcal A$ and $B(H)$ for some 
Hilbert space $H$, in terms of the (minimal) Stinespring dilation. 
As important as classical convexity is, it still has some limitations in the 
non-commutative setting. Two "non-commutative" convexity 
notions that have gathered significant attention recently are matrix-convexity 
and $C^*$-convexity, the former introduced and studied by Webster
 and Winkler in \cite{W} and \cite{WW} and the latter 
 by Hopenwasser, Loebl, Moore and Paulsen in \cite{LP} and \cite{HMP}. 
Our main focus in this article is on the latter. Although these notions appear to 
be similar, they are vastly different as 
was pointed out by Farenick in \cite{F}. Farenick and Morenz 
 also obtained a complete characterization of $C^*$-extreme 
unital completely positive maps between $C^*$-algebras  
 in \cite{FM93} and \cite{FM97}. Further contributions on this and 
related topics can be found in \cite{BBK} and \cite{BK}. 

Our main objects of focus in this article are entanglement breaking (EB) maps. These maps 
have drawn considerable attention recently and are particularly sought after 
in quantum information theory. In the finite dimensional setting, an important aspect of the
 set of unital entanglement breaking (UEB) maps is that it is $C^*$-convex. This 
 warrants the study of $C^*$-extreme UEB maps. 
A complete description of such maps between matrix algebras was obtained in \cite{BDMS}.
The purpose of this article is four fold. Firstly, we obtain the explicit structure of 
 UEB maps defined on an operator system of matrices 
 that are maximal with respect to the dilation order (see \cite{A}) on the set of 
UEB maps (See Theorem \ref{thm:max-iff-stdform}). As a consequence 
we show that every UEB map on such an operator system dilates to a maximal UEB map
 (See Theorem \ref{thm:maxdil-of-UEB}). 
Secondly, we show that UEB maps on an operator system of matrices 
that are maximal with respect to the dilation order on the set of 
UEB maps are precisely the $C^*$-extreme UEB maps and that they are also exactly the linear 
extreme UEB maps with commutative range (See Theorem \ref{thm:max-iff-C*ext}).
As a consequence, we obtain an operator system generalization 
of a characterization of $C^*$-extreme UEB maps
as well as the Krein-Milman type theorem for UEB maps 
given in \cite{BDMS}. (See Corollary \ref{cor:kre-mil-for-ueb}).
It is to be noted that the partial order used for obtaining the abstract characterization of 
$C^*$-extreme UEB maps 
in \cite{BDMS} is the usual partial order (and not the dilation 
order) on the set of UEB maps, 
 i.e., for UEB maps $\Phi$ and $\Psi$, $\Phi \le \Psi$ if and only
 if $\Psi - \Phi$ is a UEB map.
Thirdly, we show that a $C^*$-extreme UEB map defined 
on an operator system of matrices extends to a $C^*$-extreme 
UEB map on the full matrix algebra (See Corollary \ref{cor:extn}). 
Finally, we obtain an improved version of the main result Theorem 5.3 in 
\cite{BDMS} (See Theorem \ref{thm:improv}). To help us prove the above 
mentioned assertions, we make use of the following key observations namely, 
an EB map defined on an operator system of matrices 
dilates to a positive map with commutative range (See Theorem \ref{thm:eb}) and also 
has an EB extension to the full matrix algebra (See Theorem \ref{thm:eb-extn}). 
We conclude the article with an example of a $C^*$-extreme UEB map on an operator 
system (See Section \ref{s:eg}).

Before we explicitly state our main observations and results, we introduce some notations 
and definitions. 
Throughout this article, $L$ and $K$ will denote separable complex Hilbert spaces and $B(L)$ will 
denote the C*-algebra of bounded linear maps defined on $L$. An operator system $\cM \subset B(L)$ is a 
self adjoint subspace containing the identity operator $I_L$.   
A linear map $\Phi: \cM \to B(K)$ is said to be {\bf unital} if $\Phi(I_L) = I_K$, {\bf positive} if $\Phi(A)$ is
 a positive operator in $B(K)$ (in this case we write $\Phi(A) \succeq 0$ or $\Phi(A) \in B(K)^+$), 
whenever $A$ is a positive operator in $\cM$.  A unital positive linear functional on $\cM$ is
 called a {\bf state}. The notation $M_k$ will denote the space of all $ k \times k$ complex
 matrices and $M_k(\cM)$, the operator system of all $k \times k$ matrices with entries
 from $\cM$. For a given linear map $\Phi: \cM \to B(K)$ and $k\in \N$, the {\bf k$^{th}$-ampliation} 
 $\Phi_k: M_k(\cM) \to M_k(B(K))$ is defined by $\Phi_k([A_{i,j}]) = [\Phi(A_{i,j})]$.
 Under the identification of $M_k(\cM)$ with $M_k \otimes \cM $, one sees that
 $\Phi_k := \tti_k \otimes \Phi$, where $\tti_k$ is the identity operator on $M_k$. 
More specifically, $\Phi_k: M_k \otimes\cM \to M_k \otimes B(K)$ is the linear map 
determined by $\Phi_k(X \otimes A)=X \otimes \Phi(A).$ The linear map 
$\Phi:\cM \rightarrow B(K)$ is said to be \textbf{completely positive (CP)} if 
 $\Phi_k$ is positive for all $k\in \N$. 

Next we define the entanglement 
breaking property of linear maps. There are a number of competing 
definitions for the notion of an entanglement breaking map $\Phi:\cM \to B(K)$ 
that agree with the usual notion in the case 
 that $\cM = B(L)$ and $K$ is a finite dimensional Hilbert space. 
In any case, they all reference the cone of (separable) matrices, 
\[
M_k^+ \otimes B(K)^+ := \left \{ \sum_{m=1}^{\ell}A_m \otimes B_m \,:\,\ell \in \N, 
 A_m \in M_k^+, B_m \in B(K)^+ \right\}.
\]

Since we are mainly interested in the case where $\cM \subset B(L)$ is an operator system and, $K$ and $L$ are finite dimensional Hilbert spaces, 
inspired by the various notions of separability introduced in \cite{CH}, 
 we will say that the linear map $\Phi:\cM\to B(K)$ is \textbf{entanglement breaking (EB)} if
 $(\tti_k \otimes \Phi)(X) \in \overline{M_k^+ \otimes B(K)^+}$, for all 
$k\in \N$ and $X \in (M_k \otimes \cM)^+$, where the closure is with respect to 
the norm topology on $B(\C^k \otimes K)$.   Evidently, when $K$ is finite dimensional,
 $M_k^+\otimes B(K)^+$ is already norm closed.  The abbreviations {\bf UCP} and {\bf UEB} will 
be used for "unital completely positive" and "unital entanglement breaking" respectively.
The collection of all UEB maps mapping $\cM$ to $B(K)$ will be denoted by UEB($\cM, B(K))$. Finally, since we work mainly in the finite dimensional setting, we make the following notational 
conventions. 
\begin{convention}
\label{assump:main}
 Throughout $E = \C^d$ and $H = \C^n$ and 
$\cS \subset B(E)$ is an operator system. 
\end{convention}

Given a linear map $\Phi:\cS \rightarrow B(H)$, the \df{dual functional} $s_\Phi: B(H) \otimes \cS \to \C$ 
associated to it, is the linear mapping determined by 
\[s_\Phi(X \otimes A)= \trace(\Phi(A)X^t),
\]
where $t$ is the transpose operator in $B(H)$ induced by a fixed orthonormal basis of $H$. 
 It is easily seen that this
 definition of $s_{\Phi}$ is independent of the choice of the orthonormal basis and hence of the transpose 
operator induced by it and that the correspondence $\Phi \mapsto s_{\Phi}$ is bijective.  Please 
see  \cite{St}, \cite{B} for more details.

The \textbf{Choi matrix} $C_\Phi$ associated with the linear map $\Phi:B(E) \to B(H)$ is defined as 
$[\Phi(e_ie_j^*)]_{i,j=1}^d= \sum_{i,j=1}^d e_ie_j^* \otimes \Phi(e_ie_j^*)\in B(E) \otimes B(H)$,
 where $\{e_1, \dots, e_d\}$ is the standard orthonormal basis of $E$. 
One of the many significant 
applications of the Choi-matrix is the following well-known characterization of CP maps due 
to Choi.

\begin{theorem} {(\cite[Theorem 1]{C}, \cite[Theorem 4.1.8]{St})}
\label{thm:cpmaps-choikraus}
Let $\Phi: B(E) \rightarrow B(H)$ be a linear map. The  
following statements are equivalent. 
\begin{itemize}
\item[(i)] $\Phi$ is CP.
\item[(ii)] $\Phi(X) = \sum_{k=1}^\ell V_k^*XV_k$,  for some linear maps $V_k:H \to E$.
\item[(iii)] $C_{\Phi} \in (B(E) \otimes B(H))^+.$
\end{itemize}
\end{theorem}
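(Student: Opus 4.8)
The plan is to prove Choi's theorem by establishing the cycle of implications $(i) \Rightarrow (iii) \Rightarrow (ii) \Rightarrow (i)$, which is the most economical route since each arrow is natural.

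For $(i) \Rightarrow (iii)$, I would observe that the Choi matrix $C_\Phi = \Phi_d(P)$ where $P = \sum_{i,j} e_i e_j^* \otimes e_i e_j^* = \left(\sum_i e_i \otimes e_i\right)\left(\sum_j e_j \otimes e_j\right)^*$ is (a scalar multiple of) a rank-one positive operator in $M_d(B(E))$, namely $P = vv^*$ with $v = \sum_i e_i \otimes e_i$. Since $\Phi$ is CP, the ampliation $\Phi_d$ is positive, so $C_\Phi = \Phi_d(P) \succeq 0$, giving $(iii)$ immediately.

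For $(iii) \Rightarrow (ii)$, the key step is a spectral decomposition: since $C_\Phi \in (B(E)\otimes B(H))^+$, I can write $C_\Phi = \sum_{k=1}^{\ell} \eta_k \eta_k^*$ for vectors $\eta_k \in E \otimes H$ (taking $\ell$ to be the rank and $\eta_k$ the scaled eigenvectors). Each vector $\eta_k \in \C^d \otimes \C^n$ can be encoded as a linear map $V_k : H \to E$ via the canonical correspondence between $E \otimes H$ and linear maps $H \to E$ (equivalently $d \times n$ matrices), so that $\eta_k = \sum_{i} e_i \otimes (V_k^* \text{-image})$; I would then recover $\Phi(e_i e_j^*) = \sum_k V_k^* e_i e_j^* V_k$ by reading off the $(i,j)$ block of $C_\Phi$, and extend by linearity to obtain $\Phi(X) = \sum_k V_k^* X V_k$ for all $X \in B(E)$.

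The implication $(ii) \Rightarrow (i)$ is the routine direction: maps of the form $X \mapsto V^* X V$ are easily checked to be CP (their ampliations send a positive $[A_{p,q}]$ to $(\tti_k \otimes V)^* [A_{p,q}] (\tti_k \otimes V) \succeq 0$), and sums of CP maps are CP. The main obstacle, such as it is, lies in $(iii) \Rightarrow (ii)$: one must set up the bookkeeping between vectors in $E \otimes H$ and the operators $V_k$ correctly so that the block structure of $C_\Phi$ reproduces $\Phi$ on the rank-one generators $e_i e_j^*$, which span $B(E)$. Once that index-matching is pinned down the rest is formal. Since this is a cited classical result I would not belabor the computation, but would make the vector-to-operator correspondence explicit enough that the reader can verify $\Phi(e_i e_j^*) = \sum_k V_k^* e_i e_j^* V_k$.
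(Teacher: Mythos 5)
Your argument is correct and is the standard proof of Choi's theorem: $(i)\Rightarrow(iii)$ via $C_\Phi=\Phi_d(vv^*)$ with $v=\sum_i e_i\otimes e_i$, $(iii)\Rightarrow(ii)$ via a rank-one decomposition of $C_\Phi$ and the vector-to-operator correspondence (which, once pinned down as you indicate, gives $\Phi(e_ie_j^*)=\sum_k V_k^*e_ie_j^*V_k$ on the $(i,j)$ block), and $(ii)\Rightarrow(i)$ by direct verification. The paper itself supplies no proof of this statement --- it is quoted as a classical result from Choi and St\o rmer --- and your route is essentially the one found in those cited sources, so there is nothing to reconcile.
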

The formula  for $\Phi$ given in statement $(ii)$ above is called a 
{\bf Choi-Kraus decomposition} of $\Phi$ and the matrices coefficients 
$V_k$ are called {\bf Choi-Kraus operators/coefficients}. The minimum 
 number of Choi-Kraus operators required to represent $\Phi$ in the form of a Choi-Kraus 
decomposition is known as the \textbf{Choi-rank of $\Phi$}.
 The \df{Schmidt rank of a vector 
$\xi \in 
E \otimes H$}, denoted by $SR(\xi)$, is the smallest natural number $k$ such that 
$\xi = \sum_{i=1}^k x_i \otimes y_i 
\in E \otimes H$. Given a completely positive map $\Phi:B(E) \rightarrow B(H)$, 
by Theorem \ref{thm:cpmaps-choikraus} the Choi matrix $C_\Phi \in 
(B(E) \otimes B(H))^+$ and hence has a spectral decomposition of the form 
 $C_\Phi = \sum_{i=1}^m \xi_i\xi_i^*$, where $\xi_i \in E \otimes H$. Let 
$\Pi_\Phi$ denote the set of all spectral decompositions of $C_\Phi$.
The \df{Schmidt number of the Choi matrix $C_{\Phi} \in (B(E) \otimes B(H))^+$} is 
 denoted by $SN(C_\Phi)$ and  is 
defined as 
\[
SN(C_\Phi):= \min_{\Gamma \in \Pi_\Phi} \left \{ \max_{1 \le j \le m} \left \{ SR(\xi_j)\,  \middle \vert \, \Gamma = \sum_{i=1}^m \xi_i\xi_i^* \in \Pi_\Phi \right \} \right\}.
\]
See \cite{TH} for more details.

In this article, our main focus will be on EB maps. 
The following are a few well-known characterizations of EB maps 
that will be used in the sequel. 

\begin{theorem}(\cite[Theorem 4]{HSR}, \cite{TH})
\label{thm:ebmaps}
Let $\Phi:B(E) \rightarrow B(H)$ be a linear map. 
The following statements are equivalent. 
\begin{enumerate}[(i)]
\item \label{i:ebmaps1} $\Phi$ is EB.
\item \label{i:ebmaps2} $\Phi(X) = \sum_{j=1}^m \phi_j(X)R_j$, where the $\phi_j$'s are states defined on $B(E)$ and the $R_j$'s are positive operators in $B(H)$.
\item \label{i:ebmaps3} $\Phi(X) = \sum_{k=1}^\ell V_k^*XV_k$, where each $V_k:H \to E$ is a linear map of rank one.
\item $C_{\Phi} \in B(E)^+ \otimes B(H)^+$. (i.e., $C_{\Phi}$ is separable.)
\item $SN(C_{\Phi}) = 1$.
\end{enumerate}
\end{theorem}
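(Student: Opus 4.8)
The plan is to treat the Choi matrix $C_\Phi$ as the hub and close the cycle $\mathrm{(i)}\Rightarrow\mathrm{(iv)}\Rightarrow\mathrm{(iii)}\Rightarrow\mathrm{(ii)}\Rightarrow\mathrm{(i)}$, handling $\mathrm{(iv)}\Leftrightarrow\mathrm{(v)}$ separately since it merely unwinds the definition of Schmidt number. The single computation driving most of the argument is the following: if $\Phi(X)=\sum_k V_k^*XV_k$, then setting $\xi_k=\sum_i e_i\otimes V_k^*e_i\in E\otimes H$ one finds $C_\Phi=\sum_k\xi_k\xi_k^*$. Moreover $V_k$ has rank one if and only if $\xi_k$ is a product vector (equivalently $SR(\xi_k)=1$): when $V_k=\eta_k\zeta_k^*$ the vector $V_k^*e_i$ is a scalar multiple of $\zeta_k$, so $\xi_k=w_k\otimes\zeta_k$ for a suitable $w_k\in E$. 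Since $\Phi\mapsto C_\Phi$ is a bijection (Theorem \ref{thm:cpmaps-choikraus}), this identity can be read in either direction, and that flexibility is what I would exploit repeatedly.

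For $\mathrm{(i)}\Rightarrow\mathrm{(iv)}$ I would apply the entanglement breaking hypothesis to the positive operator $\omega=\sum_{i,j}e_ie_j^*\otimes e_ie_j^*=\xi\xi^*$, where $\xi=\sum_i e_i\otimes e_i$. This $\omega$ lies in $(M_d\otimes B(E))^+$, and since $(\tti_d\otimes\Phi)(\omega)=C_\Phi$ while $M_d^+\otimes B(H)^+$ is already norm closed in finite dimensions, the EB condition forces $C_\Phi\in B(E)^+\otimes B(H)^+$. This is the only place the abstract definition enters, and notably it needs only the single ampliation $k=d$. Conversely, for $\mathrm{(iv)}\Rightarrow\mathrm{(iii)}$ I would take a separable decomposition $C_\Phi=\sum_k\xi_k\xi_k^*$ into product vectors $\xi_k=w_k\otimes\zeta_k$ and choose rank-one operators $V_k:H\to E$ with $\xi_k=\sum_i e_i\otimes V_k^*e_i$ (possible exactly because each $\xi_k$ is a product vector). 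By the identity above the map $X\mapsto\sum_k V_k^*XV_k$ has Choi matrix $\sum_k\xi_k\xi_k^*=C_\Phi$, and injectivity of $\Phi\mapsto C_\Phi$ identifies it with $\Phi$.

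The remaining links are routine rewritings. For $\mathrm{(iii)}\Rightarrow\mathrm{(ii)}$, writing each rank-one $V_k=\eta_k\zeta_k^*$ gives $V_k^*XV_k=(\eta_k^*X\eta_k)\,\zeta_k\zeta_k^*$, so the vector state $\phi_k(X)=\|\eta_k\|^{-2}\eta_k^*X\eta_k$ together with the positive operator $R_k=\|\eta_k\|^2\zeta_k\zeta_k^*$ yields the form in $\mathrm{(ii)}$. For $\mathrm{(ii)}\Rightarrow\mathrm{(i)}$, linearity gives for any $Z\in(M_k\otimes B(E))^+$ the expression $(\tti_k\otimes\Phi)(Z)=\sum_j(\tti_k\otimes\phi_j)(Z)\otimes R_j$; each $\phi_j$ is a state and hence completely positive (positive functionals into $\C$ are automatically CP), so $(\tti_k\otimes\phi_j)(Z)\in M_k^+$ and the right-hand side lies in $M_k^+\otimes B(H)^+$. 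This is the step that discharges the universal quantifier over $k$ in the definition, and it does so cheaply precisely because states are CP. Finally $\mathrm{(iv)}\Leftrightarrow\mathrm{(v)}$ holds because $SN(C_\Phi)=1$ says exactly that $C_\Phi$ admits a decomposition into Schmidt-rank-one (product) vectors, which is separability.

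I expect no serious obstacle, since this is a finite-dimensional statement; the only genuine care lies in the correspondence between rank-one Choi--Kraus operators and product vectors in $C_\Phi$ (including the harmless conjugations hidden in $w_k$ versus $\eta_k$) and in invoking the injectivity of $\Phi\mapsto C_\Phi$ to reconstruct $\Phi$ in $\mathrm{(iv)}\Rightarrow\mathrm{(iii)}$. Conceptually the heart of the theorem is the pair $\mathrm{(i)}\Rightarrow\mathrm{(iv)}$ and $\mathrm{(ii)}\Rightarrow\mathrm{(i)}$: the former shows that testing against the single maximally entangled projection already detects the entanglement breaking property, while the latter shows that once the Choi matrix is separable the entire family of ampliations is controlled for free.
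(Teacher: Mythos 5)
The paper offers no proof of this theorem; it is quoted directly from the cited references [HSR] and [TH]. Your argument is correct and is essentially the standard proof from those sources: the Choi--Jamio\l{}kowski correspondence $\Phi\mapsto C_\Phi$ together with the observation that rank-one Choi--Kraus operators correspond exactly to Schmidt-rank-one vectors in a decomposition $C_\Phi=\sum_k\xi_k\xi_k^*$, with the maximally entangled projection witnessing $\mathrm{(i)}\Rightarrow\mathrm{(iv)}$ and complete positivity of states discharging the quantifier over $k$ in $\mathrm{(ii)}\Rightarrow\mathrm{(i)}$.
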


The formula for $\Phi$ given in statement \eqref{i:ebmaps2} of the above theorem is called a 
{\bf Holevo form of $\Phi$}. The \textbf{EB-rank of $\Phi$} is defined as the
minimum number of Choi-Kraus operators of rank one,
 required to represent $\Phi$ as in statement \eqref{i:ebmaps3} of Theorem \ref{thm:ebmaps}.
Please see \cite{PPPR} for more details.

"Extremal" UEB maps are of  utmost importance to us.  Let $\cS \subset B(E)$ be an operator system. 
Here we mainly consider two notions of extreme points of UEB($\cS, B(H))$. 
The UEB map $\Phi: \cS \to B(H)$ is said to be a {\bf linear extreme point} of UEB($\cS, B(H))$ if 
\[
\Phi= \sum_{i=1}^k t_i \Phi_i
\]
 for some UEB maps $\Phi_i: \cS \to B(H)$ and $t_i \in (0,1)$ satisfying $\sum_{i=1}^k t_i=1$,
 then $\Phi_i=\Phi$ for all $i$. Linear extreme UCP maps betweeen $C^*$-algebras were characterized by Arveson in \cite{A69}.
 
The notion of $C^*$-convexity and $C^*$-extreme points were first introduced and studied in \cite{LP} and 
\cite{HMP} respectively. $C^*$-extreme UCP maps were extensively studied in \cite{FM93}, \cite{FM97}, \cite{FZ} and \cite{Z}. 
In \cite{BDMS}, the authors consider $C^*$-extreme UEB maps between matrix algebras. 
Along the same lines, we define $C^*$-extreme points in UEB($\cS, B(H))$.

Let $\Phi_1, \Phi_2,\dots,\Phi_k \in \text{UEB}(\cS, B(H))$. A UEB map $\Phi:\cS \rightarrow B(H)$ is 
said to be a \df{$C^*$-convex combination} of the UEB maps $\Phi_i$, if there exists 
$T_1,\dots,T_k \in B(H)$ such that $\sum_{i=1}^k T_i^*T_i = I_H$ and 
$\Phi(X)= \sum_{i=1}^k T_i^* \Phi_i(X) T_i$ for every $X \in \cS$. The $T_i$'s will be 
referred to as the \df{coefficients} of this $C^*$-convex combination. If the 
coefficients, i.e., the $T_i$'s are positive, then this $C^*$-convex combination will be called \df{a 
positive $C^*$-convex combination}.


A UEB map $\Phi: \cS \to B(H)$ is a \df{proper $C^*$-convex combination} of the UEB maps $\Phi_i$, 
$1 \le i \le k$, if there exists invertible operators $T_i \in B(H)$ such that 
$\sum_{i=1}^k T_i^*T_i = I_H$ and 
\begin{equation}
\label{eq:C*-convex combi}
\Phi(X)= \sum_{i=1}^k T_i^* \Phi_i(X) T_i,
\end{equation}
for all $X\in \cS.$ 
This $C^*$-convex combination is 
\df{trivial} if each $\Phi_i$ is unitarily equivalent to $\Phi;$ that is, 
there exist unitary operators $U_i$ such that  $\Phi_i(X)=U_i^*\Phi(X) U_i.$ 
The UEB map $\Phi: \cS \to B(H)$ is said to be a {\bf $C^*$-extreme point} of 
UEB($\cS, B(H))$ if, every representation of  $\Phi$ 
as a proper $C^*$-convex combination is trivial. 

 The {\bf $C^*$-convex hull} of the set $\cK \subset \text{UEB}(\cS, B(H))$ is 
the set of all $C^*$-convex combinations of 
elements of $\cK$. The set $\cK$ is said to be {\bf $C^*$-convex} if it 
equals its $C^*$-convex hull.


%

\begin{remark}
The set $\text{UEB}(\cS, B(H))$ is C*-convex. This can be seen as follows. 
For $1 \le i \le k$, let $\Phi_i \in$$\text{UEB}(\cS, B(H))$, $T_i \in B(H)$ 
be such that $\sum_{i=1}^k T_i^*T_i = I_H$ and $\Phi$ be as in 
equation \eqref{eq:C*-convex combi}. Indeed $\Phi$ is unital.  
 Define $\Gamma_i:\cS \rightarrow B(H)$ by $\Gamma_i(X) = T_i^*\Phi_i(X)T_i$.  
Since each $\Phi_i$ is a UEB map, 
for each $m \in \N$ and $Z \in (M_m \otimes \cS)^+$, it is the case that 
$(\tti_m \otimes \Phi_i)(Z) \in M_m^+ \otimes B(H)^+.$ 
Let $(\tti_m \otimes \Phi_i)(Z) := \sum_{j=1}^{\ell_i} A_{i,j} \otimes B_{i,j} 
\in M_m^+ \otimes B(H)^+$. 
It follows that 
$(\tti_m \otimes \Gamma_i)(Z) 
= 
\sum_{j=1}^{\ell_i} A_{i,j} \otimes T_i^* B_{i,j} T_i  \in M_m^+ \otimes B(H)^+.
$
Thus $(\tti_m \otimes \Phi)(Z) = \sum_{i=1}^k (\tti_m \otimes \Gamma_i)(Z) \in 
M_m^+ \otimes B(H)^+$ and $\Phi \in$ $\text{UEB}(\cS, B(H))$. 
\end{remark}

\begin{remark} 
\label{rem:cstar-ext-is-linearext}
Given our finite dimensionality assumptions, it is a standard observation that
 $C^*$-extreme UEB maps from $\cS \subset B(E)$ mapping into $B(H)$ are 
also linear extreme UEB maps. Please see \cite[Theorem 2.2.2]{Z} for a 
proof of this fact for UCP maps. However, not every linear extreme UEB map 
is $C^*$-extreme, as an example in \cite{HSR} shows. See also 
example 5.7 in \cite{BDMS}.
\end{remark}

%

With the above given definitions and remarks, we proceed to 
state our main observations and results. 
\subsection{EB maps - Extension and Structure} A well-known extension theorem for CP maps
due to  Arveson says that every CP map defined on an operator system in a $C^*$-algebra 
mapping into $B(K)$ for some Hilbert space $K$, has a CP extension (See \cite[Theorem 7.5]{P}). 
Our first main observation in this article is the following analogous extension theorem for EB maps. 

\begin{theorem}
\label{thm:eb-extn}
Let $\cS \subset B(E)$ be an operator system and $\phi: \cS \to B(H)$ be an EB map. There exists an EB map $\Phi: B(E) \to B(H)$ such that $\Phi\big|_\cS= \phi$.
\end{theorem}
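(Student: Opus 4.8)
The plan is to reduce the extension problem to a concrete Holevo-type decomposition of $\phi$ over $\cS$, and then to extend the ingredients of that decomposition one tensor factor at a time. It is worth noting at the outset that the obvious first attempt fails: applying Arveson's extension theorem to $\phi$ (which is in particular CP, since the separable cone sits inside the positive cone) produces a completely positive extension to $B(E)$, but a CP map need not be EB, so this gives nothing. The extension must be built so as to preserve the entanglement breaking property, and the Holevo form is precisely the structure that makes this transparent. Accordingly, I would first establish that every EB map $\phi:\cS\to B(H)$ admits a decomposition
\[
\phi(A)=\sum_{j=1}^{m}\psi_j(A)\,R_j,\qquad A\in\cS,
\]
where each $\psi_j$ is a state on $\cS$ and each $R_j\in B(H)^+$.

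Granting this decomposition, the remainder is routine. Each state $\psi_j$ on the operator system $\cS\subset B(E)$ extends to a state $\widetilde{\psi}_j$ on the full algebra $B(E)$ (a state is a UCP map into $\C=B(\C)$, so Arveson's extension theorem applies; equivalently one invokes the Krein extension of positive functionals). I then define $\Phi:B(E)\to B(H)$ by $\Phi(X)=\sum_{j}\widetilde{\psi}_j(X)R_j$. Restricting to $\cS$ gives $\Phi\big|_\cS=\sum_j\psi_j(\cdot)R_j=\phi$, and $\Phi$ is EB directly by the Holevo-form criterion, namely the implication $\eqref{i:ebmaps2}\Rightarrow\eqref{i:ebmaps1}$ of Theorem \ref{thm:ebmaps}, since the $\widetilde{\psi}_j$ are genuine states on $B(E)$ and the $R_j$ are positive. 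Finite-dimensionality guarantees that only finitely many terms occur.

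The main obstacle is therefore the decomposition in the first step; this is where the content lies. The difficulty is that the definition of EB only constrains the ampliations $\tti_k\otimes\phi$ on \emph{positive} elements of $M_k(\cS)$, whereas a Holevo decomposition is genuinely stronger: it encodes nonnegativity of $\phi$ against all block-positive (entanglement-witness) data, not merely positive data, and so it cannot be read off from the definition by a direct manipulation. I would obtain it from the commutative-range dilation of EB maps recorded in Theorem \ref{thm:eb}: writing $\phi(A)=V^*\rho(A)V$ with $V:H\to\widehat H$ an isometry and $\rho:\cS\to B(\widehat H)$ a positive map whose range is a commutative $C^*$-algebra, I would simultaneously diagonalise $\rho(\cS)$ as $\rho(A)=\sum_j f_j(A)E_j$ with $\{E_j\}$ mutually orthogonal projections. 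Positivity of $\rho$ forces each coefficient functional $f_j$ to be positive, hence (after normalisation) a state $\psi_j$ on $\cS$, and then $\phi(A)=\sum_j f_j(A)\,V^*E_jV=\sum_j\psi_j(A)R_j$ with $R_j=V^*E_jV\succeq 0$, which is exactly the required form.

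Finally, I would remark that the argument can be phrased dually and that this dual form clarifies the obstacle. Via the bijection $\phi\mapsto s_\phi$, one checks that $\phi$ is EB precisely when $s_\phi$ is nonnegative on the cone in $B(H)\otimes\cS$ that is dual to the separable cone (the entanglement witnesses); a Hahn--Banach separation argument then shows that the restriction map, sending an EB map on $B(E)$ to its restriction to $\cS$, has image containing every EB map on $\cS$, which is the assertion of the theorem. This reformulation ultimately rests on the same positivity-against-witnesses fact that the commutative dilation supplies, so it does not circumvent the main obstacle but only repackages it.
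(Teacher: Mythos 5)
Your second step is fine: once $\phi$ is known to have a Holevo form $\phi(A)=\sum_j\psi_j(A)R_j$ over $\cS$, extending each state $\psi_j$ to a state on $B(E)$ by Krein/Arveson and reassembling does produce an EB extension, by the implication \eqref{i:ebmaps2}$\Rightarrow$\eqref{i:ebmaps1} of Theorem~\ref{thm:ebmaps}. The problem is your first step, and the gap is circularity. You correctly observe that the Holevo form on an operator system cannot be read off from the definition of EB (for $\cS\subsetneq B(E)$ there is no Choi matrix to appeal to), and you propose to obtain it from the commutative-range dilation of Theorem~\ref{thm:eb}. But in this paper Theorem~\ref{thm:eb} is not independent of the statement you are proving: its proof goes through Lemma~\ref{lem:eb-map-factors}, whose very first line invokes Theorem~\ref{thm:eb-extn} to extend $\phi$ to $B(E)$ and then reads off the Holevo form there. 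Likewise Corollary~\ref{cor:ebmaps-on-S} (the Holevo form on $\cS$) is stated as a consequence of Theorem~\ref{thm:eb-extn}. So the decomposition you need as input is, in the paper's logical order, downstream of the theorem you are trying to prove, and you supply no independent derivation of it.

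The route that actually avoids the circle is the one you relegate to a closing remark and dismiss as a repackaging: the duality argument. The paper's proof runs entirely through it. One shows (Lemma~\ref{lem:mappingcone}, using St{\o}rmer's result that the dual functional $s_\phi$ of an EB map is nonnegative on the relevant witness cone) that $\phi$ is $\cC$-positive for the symmetric mapping cone $\cC=\mathrm{EB}(B(H))$; St{\o}rmer's approximation theorem then yields maps $\phi_j=\sum_i\alpha_i^{(j)}\circ\psi_i^{(j)}$ converging to $\phi$, with $\alpha_i^{(j)}\in\cC$ and $\psi_i^{(j)}$ merely CP on $\cS$; the CP factors are extended by Arveson, the compositions are EB on all of $B(E)$, and a compactness argument upgrades the approximate extensions to an exact one. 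None of this requires knowing the Holevo form on $\cS$ in advance; indeed the Holevo form on $\cS$ and the commutative dilation are then harvested as corollaries. To repair your proposal you would need either to develop that separation/approximation argument in full, or to give a proof of the commutative-range dilation (or of Corollary~\ref{cor:ebmaps-on-S}) that does not pass through an extension to $B(E)$; as written, the main obstacle you identify is precisely the one left unresolved.
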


A proof of Theorem \ref{thm:eb-extn} is given in Section \ref{S:extn}. As an immediate consequence of the above theorem, one obtains the following operator system version of Theorem \ref{thm:ebmaps}.

\begin{corollary}
\label{cor:ebmaps-on-S}
The equivalence of statements \eqref{i:ebmaps1}, \eqref{i:ebmaps2} and \eqref{i:ebmaps3} of Theorem~\ref{thm:ebmaps} holds even for UEB maps defined on operator systems. More precisely, if $\cS \subset B(E)$ is an operator system and 
$\Phi:\cS \rightarrow B(H)$ is  a linear map, then the following statements are equivalent.
\begin{enumerate}
\item[(i)]$\Phi$ is EB.
\item[(ii)] $\Phi$ can be written in the Holevo form, i.e., $\Phi(X) = \sum_{j=1}^m \phi_j(X)R_j$,
 where the $\phi_j$'s are states defined on the operator system $\cS$ and the $R_j$'s are positive operators in $B(H)$.
\label{Choi-Krauss form}
\item[(iii)] $\Phi$ has a Choi-Kraus decomposition with rank-one Choi-Kraus coefficients, i.e., there exist linear maps $V_k:H \to E$ of rank one such that $\Phi(X) = \sum_{k=1}^\ell V_k^*XV_k$, 
for all $X \in \cS$.
\end{enumerate}
\end{corollary}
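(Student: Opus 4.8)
The plan is to establish the cycle of implications (i) $\Rightarrow$ (iii) $\Rightarrow$ (ii) $\Rightarrow$ (i), arranged so that the one genuinely nontrivial step, namely (i) $\Rightarrow$ (iii), is reduced to the matrix-algebra characterization in Theorem~\ref{thm:ebmaps} via the extension result Theorem~\ref{thm:eb-extn}. The remaining two implications are elementary and require only direct manipulation of the Holevo and rank-one Choi--Kraus forms together with the definition of the EB property for operator systems. I expect the extension theorem to do essentially all of the real work; once a map on $\cS$ is known to extend to an EB map on the full algebra $B(E)$, the structure on $\cS$ is inherited by restriction, and everything else is bookkeeping.

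For (i) $\Rightarrow$ (iii), I would first apply Theorem~\ref{thm:eb-extn} to obtain an EB map $\widetilde\Phi : B(E) \to B(H)$ with $\widetilde\Phi\big|_{\cS} = \Phi$. Since $\widetilde\Phi$ is now an EB map defined on the full algebra $B(E)$, the equivalence of \eqref{i:ebmaps1} and \eqref{i:ebmaps3} in Theorem~\ref{thm:ebmaps} yields rank-one linear maps $V_k : H \to E$ with $\widetilde\Phi(X) = \sum_{k=1}^{\ell} V_k^* X V_k$ for all $X \in B(E)$. Restricting this identity to $X \in \cS$ gives exactly the decomposition in statement (iii), completing this direction.

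For (iii) $\Rightarrow$ (ii), I would unpack a rank-one coefficient as $V_k = x_k y_k^*$ with $x_k \in E$ and $y_k \in H$, so that for $X \in \cS$ one computes $V_k^* X V_k = (x_k^* X x_k)\, y_k y_k^* = \langle X x_k, x_k\rangle\, y_k y_k^*$. Discarding any $k$ with $x_k = 0$ and setting $\phi_k(X) := \|x_k\|^{-2}\langle X x_k, x_k\rangle$ and $R_k := \|x_k\|^2\, y_k y_k^*$, each $\phi_k$ is a state on $\cS$ (it is positive and $\phi_k(I_E) = 1$) and each $R_k \succeq 0$, so $\Phi(X) = \sum_k \phi_k(X) R_k$ is a Holevo form. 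Finally, for (ii) $\Rightarrow$ (i), I would verify the EB condition directly from the definition: each state $\phi_j$ on $\cS$ is automatically completely positive, being scalar-valued, so for any $k \in \N$ and any $X \in (M_k \otimes \cS)^+$ one has $(\tti_k \otimes \phi_j)(X) \in M_k^+$. Consequently
\[
(\tti_k \otimes \Phi)(X) = \sum_{j=1}^m (\tti_k \otimes \phi_j)(X) \otimes R_j \in M_k^+ \otimes B(H)^+,
\]
which is precisely the assertion that $\Phi$ is EB. This closes the cycle, and the only place where a result beyond routine computation is invoked is Theorem~\ref{thm:eb-extn}, as anticipated.
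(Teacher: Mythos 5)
Your proof is correct and follows exactly the route the paper intends: the paper states this corollary as an ``immediate consequence'' of Theorem~\ref{thm:eb-extn}, and your argument---extend via Theorem~\ref{thm:eb-extn}, invoke Theorem~\ref{thm:ebmaps} on the full algebra, restrict, and close the cycle with the routine Holevo/rank-one computations and the fact that states on operator systems are completely positive---is precisely the intended filling-in of that claim.
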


It is a well-known result due to Stinespring (\cite[Theorem 4.1]{P}) that a UCP map defined on a C*-algebra $\cA$
mapping into $B(K)$ for some Hilbert space $K$, dilates to a representation. In this article we point out a similar structure theorem for EB maps (for our finite dimensional setting). The following characterization of an EB map in terms of a dilation, is our second main observation 
in this article. 

\begin{theorem}
\label{thm:eb}
Let $\cS \subset B(E)$ be an operator system and $\Phi:\cS \rightarrow B(H)$ a linear map. The following statements are equivalent.
\begin{itemize}
\item[(i)] $\Phi$ is a (unital) EB map.
\item[(ii)] $\Phi(X) = V^* \Gamma(X) V$ for all $X\in \cS$, where $V:H \rightarrow K$ is an isometry for some  finite dimensional Hilbert space $K$ 
 and $\Gamma:\cS \rightarrow B(K)$ is a (unital) positive map with commutative range.
\item[(iii)] $\Phi$  is the compression of a (unital) EB map with commutative range contained in $B(K)$ for some finite dimensional Hilbert space $K$. 
\end{itemize}
\end{theorem}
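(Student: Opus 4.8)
The plan is to establish the cycle $(i)\Rightarrow(ii)\Rightarrow(iii)\Rightarrow(i)$. The engine of the argument is the Holevo form furnished by Corollary~\ref{cor:ebmaps-on-S}, together with one elementary lemma that I would record first: \emph{a positive map $\Gamma:\cS\to B(K)$ with commutative range (with $K$ finite dimensional) is automatically EB.} To see this, note that a positive map is self-adjoint, so $\Gamma$ carries self-adjoint elements to a commuting family of self-adjoint operators; simultaneously diagonalizing the finite-dimensional commutative $C^*$-algebra they generate produces a complete family of orthogonal projections $P_1,\dots,P_r$ on $K$ and linear functionals $\gamma_j$ with $\Gamma(X)=\sum_{j=1}^r\gamma_j(X)P_j$. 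Positivity of $\Gamma$ forces each $\gamma_j$ to be a positive functional, since $\gamma_j(X)$ is an eigenvalue of the positive operator $\Gamma(X)$ whenever $X\succeq0$; hence $\gamma_j$ is a nonnegative multiple of a state, and absorbing that multiple into $P_j$ exhibits $\Gamma$ in the Holevo form of Corollary~\ref{cor:ebmaps-on-S}, so $\Gamma$ is EB.

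For $(i)\Rightarrow(ii)$ I would feed the unital EB map $\Phi$ into Corollary~\ref{cor:ebmaps-on-S} to get $\Phi(X)=\sum_{j=1}^m\phi_j(X)R_j$ with $\phi_j$ states on $\cS$ and $R_j\succeq0$ in $B(H)$. Put $K=H^{(m)}=\bigoplus_{j=1}^m H$ and define $\Gamma:\cS\to B(K)$ by $\Gamma(X)=\bigoplus_{j=1}^m\phi_j(X)I_H$; this is positive, unital (as $\phi_j(I_E)=1$), and its range of scalar-block-diagonal operators is commutative. Defining $V:H\to K$ by $Vh=(R_1^{1/2}h,\dots,R_m^{1/2}h)$ gives $V^*\Gamma(X)V=\sum_j R_j^{1/2}\phi_j(X)R_j^{1/2}=\Phi(X)$ and $V^*V=\sum_j R_j=\Phi(I_E)=I_H$, so $V$ is an isometry. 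When $\Phi$ is not assumed unital one instead takes $\Gamma(X)=\bigoplus_j\phi_j(X)\,mR_j$ and lets $V$ be the column with every block equal to $m^{-1/2}I_H$, which keeps $V$ an isometry at the cost of a non-unital $\Gamma$.

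The implication $(ii)\Rightarrow(iii)$ is then immediate from the lemma: the positive, commutative-range map $\Gamma$ in $(ii)$ is EB, so $\Phi=V^*\Gamma(\cdot)V$ already displays $\Phi$ as the compression of an EB map with commutative range (unital exactly when $\Phi$ is). For $(iii)\Rightarrow(i)$ I would check directly that a compression of an EB map is EB. Writing $\Phi=V^*\Psi(\cdot)V$ with $\Psi$ EB and $V:H\to K$ an isometry, fix $k\in\N$ and $X\in(M_k\otimes\cS)^+$; since ampliation commutes with compression, $(\tti_k\otimes\Phi)(X)=(I\otimes V)^*\big((\tti_k\otimes\Psi)(X)\big)(I\otimes V)$, where $I$ is the identity on $\C^k$. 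The map $A\otimes B\mapsto A\otimes V^*BV$ is linear, norm continuous, and sends $M_k^+\otimes B(K)^+$ into $M_k^+\otimes B(H)^+$, hence it carries $\overline{M_k^+\otimes B(K)^+}$ into $\overline{M_k^+\otimes B(H)^+}$; as $(\tti_k\otimes\Psi)(X)$ lies in the former, $(\tti_k\otimes\Phi)(X)$ lies in the latter, and $\Phi$ is EB (and unital when $\Psi$ is).

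I expect the lemma to be the crux: one must be sure that commutativity of the range together with positivity really does yield the Holevo decomposition, i.e.\ that the simultaneous-diagonalization coefficients are genuine positive functionals rather than merely bounded ones, so that Corollary~\ref{cor:ebmaps-on-S} applies verbatim. The surrounding bookkeeping—that $V$ is an isometry precisely when $\Phi$ is unital, and that conjugation by $I\otimes V$ preserves the closed separable cone—is routine, and it is exactly here that finite dimensionality of $K$ is used, both to guarantee simultaneous diagonalizability and to control the limits implicit in the closure of $M_k^+\otimes B(H)^+$.
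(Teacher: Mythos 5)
Your proof is correct, and while it follows the same overall cycle $(i)\Rightarrow(ii)\Rightarrow(iii)\Rightarrow(i)$ with the Holevo form of Corollary~\ref{cor:ebmaps-on-S} as the engine, each leg is executed by a genuinely different and more self-contained route than the paper's. For $(i)\Rightarrow(ii)$ the paper (Lemma~\ref{lem:eb-map-factors}) factors $\Phi$ through $\ell^\infty_m$ and invokes Stinespring's theorem for the resulting UCP map on a commutative $C^*$-algebra, whereas you build the dilation explicitly as $\Gamma(X)=\bigoplus_j\phi_j(X)I_H$ on $H^{(m)}$ with $V h=(R_1^{1/2}h,\dots,R_m^{1/2}h)$ --- a hand-made Naimark dilation that also sidesteps the paper's slightly awkward non-unital bookkeeping (it appends $R_{m+1}=I_H-\Phi(I_E)$ after a ``WLOG $\|\Phi\|\le 1$''). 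For the key fact that a positive map with commutative range is EB, the paper (Lemma~\ref{lem:pos-to-eb}) extends $\Gamma$ to a CP map into a commutative $C^*$-algebra via a cited result of Heinosaari--Jivulescu--Reeb--Wolf and then adapts a lemma of Rahaman--Jaques--Paulsen; your direct simultaneous-diagonalization argument, reading off $\Gamma(X)=\sum_j\gamma_j(X)P_j$ with $\gamma_j$ positive because $\gamma_j(X)$ is an eigenvalue of $\Gamma(X)\succeq 0$, avoids both citations and is essentially the argument the paper only deploys later, in Lemma~\ref{lem:comm-range}, for unital maps. For $(iii)\Rightarrow(i)$ the paper compresses a Holevo form term by term, while you verify directly from the definition that conjugation by $I\otimes V$ carries $\overline{M_k^+\otimes B(K)^+}$ into $\overline{M_k^+\otimes B(H)^+}$; this is cleaner and, unlike the Holevo route, does not depend on Corollary~\ref{cor:ebmaps-on-S} at all. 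The one thing your approach still shares with the paper's is reliance on the extension theorem (Theorem~\ref{thm:eb-extn}) hiding inside Corollary~\ref{cor:ebmaps-on-S} to produce the Holevo form of $\Phi$ in the first step; your remaining steps, however, trade external machinery for elementary explicit constructions, which is a worthwhile gain in transparency. (One minor quibble: finite dimensionality of $K$ is not actually needed for the closure argument in $(iii)\Rightarrow(i)$, since conjugation by $I\otimes V$ is norm continuous regardless; it is needed where you say, for the simultaneous diagonalization.)
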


A proof of Theorem \ref{thm:eb} is given in Section \ref{S:Dila}. It relies mainly on the 
 observation that an entanglement breaking map between matrix algebras factors via the
 commutative C*-algebra $\ell^{\infty}_k$ for some $k$. See \cite{KMP} and \cite{JKPP}
 for more details. We anticipate that the above structure theorem for "EB" maps 
being compressions of "positive" maps with commutative range should hold for a much more 
general setting than is considered here (for instance, for strongly entanglement breaking 
maps in the infinite dimensional setting (See \cite{LD})). 

\begin{remark} 
Theorem \ref{thm:eb} can also be deduced from the well-known fact that a 
 Positive Operator Valued Measure (POVM) dilates to a Projection Valued 
Measure (PVM) (see \cite{Y}), which in turn can be deduced from Naimark's dilation Theorem 
(\cite[Theorem 4.6]{P}). 
\end{remark}

\subsection{Maximal UEB maps}

In \cite{A} and \cite{A72}, Arveson defined the notion of a maximal UCP dilation of a given UCP map on an operator system. He also showed that maximal UCP dilations always exist and that such maps are precisely the ones that satisfy a unique extension property.  Following \cite{A} and \cite{A72}, here we consider the UEB analog of maximal dilations (for our finite dimensional setting.)  
Let $\cS \subset B(E)$ be an operator system and let $\Phi: \cS \to B(H)$ be a UEB map. A linear map $\Psi: \cS \to B(K)$ is said to be a \textbf{UEB dilation} of $\Phi$ if $\Psi$ is a UEB map, $K$ is a separable Hilbert space, and there exists an isometry $V: H \to K$ such that 
 \begin{equation}
\label{eq:eb-eq}
\Phi(X)= V^*\Psi(X)V, 
\end{equation}
for all $X\in \cS$. In this case we write $\Phi \le \Psi$. Note that if $H$ is identified with $VH$, then "$\le$" is a partial order on 
the set of all UEB 
maps defined on $\cS$. The  UEB dilation $\Psi$ of $\Phi$ is said to be \df{trivial} if 
 \begin{equation}
 \label{eq:trivial-dil-eq}
 \Psi(X)V(x)=V(\Phi(X)(x)),
\end{equation}  for all $X\in \cS$ and $x\in H$. The UEB map $\Phi$ is  said to be {\bf maximal} if every UEB dilation of $\Phi$ is trivial.

\begin{remark}
 \label{rem:alt-defn}

The following observations are immediate from the above definition of maximality for UEB maps.
\begin{itemize}
\item [(i)] Since $VV^*$ is the projection onto $\range(V)$, it follows that the UEB dilation $\Psi$ is trivial if and only if $VH$ is an invariant subspace for $\Psi(X)$ for all $X \in \cS$. Also, since $\cS$ is an operator system and $\Psi(X^*)=\Psi(X)^*$ for all $X\in \cS,$ it follows that the 
UEB dilation $\Psi$ of $\Phi$ is trivial if and only if $VH$ is a reducing subspace for $\Psi(X)$ for every $X \in \cS$. 
\item[(ii)] Since $H$ is finite dimensional and $V:H \rightarrow K$ is an isometry, there is no loss of generality 
in assuming $K$ to be finite dimensional. (See Lemma \ref{lem:fin-max}) 
\end{itemize}
\end{remark}
The following characterization of maximal UEB maps is one of our main results in this article. 

\begin{theorem}
\label{thm:max-iff-stdform}
Let $\cS \subset B(E)$ be an operator system and $\Phi: \cS \to B(H)$ be a UEB map. The following statements are equivalent.
\begin{itemize}
\item[$(i)$] $\Phi$ is maximal.
\item[$(ii)$] $\Phi$ has the form 
\begin{equation}
\label{eq:stdform}
 \Phi(X)= \sum_{i=1}^k \phi_i(X)P_i ,
\end{equation}
for all $X\in \cS$, where the $\phi_i$'s are distinct linear extremal states on $\cS$, $k\leq n$ and the $P_i$'s are mutually orthogonal projections in $B(H)$ such that $\sum_{i=1}^k P_i=I_H$.
\end{itemize} 
\end{theorem}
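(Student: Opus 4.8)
The plan is to prove $(i)\Leftrightarrow(ii)$ through two implications, the central structural tool being the following elementary observation, which I would record once at the outset. If $\Gamma:\cS\to B(K)$ is a unital positive map whose range is commutative, then the unital finite-dimensional commutative $C^*$-algebra $\cA\subset B(K)$ generated by $\range(\Gamma)$ has mutually orthogonal minimal projections $Q_1,\dots,Q_r$ with $\sum_j Q_j=I_K$, and writing $\Gamma(X)=\sum_{j=1}^r\gamma_j(X)Q_j$ exhibits states $\gamma_j$ on $\cS$ that are automatically distinct (the corresponding characters of $\cA$, agreeing on the generating set $\range(\Gamma)$, would otherwise coincide). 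Such a $\Gamma$ has a manifest Holevo form and is therefore UEB by Corollary \ref{cor:ebmaps-on-S}.

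For $(i)\Rightarrow(ii)$: applying Theorem \ref{thm:eb} to the maximal map $\Phi$ produces an isometry $V:H\to K$ and a unital positive map $\Gamma:\cS\to B(K)$ with commutative range such that $\Phi(X)=V^*\Gamma(X)V$; since $\Gamma$ is UEB this is a UEB dilation, so maximality forces $VH$ to reduce every $\Gamma(X)$. Hence the family $\{\Phi(X)\}$ is unitarily equivalent to the restrictions $\{\Gamma(X)|_{VH}\}$ of a commuting family, so $\Phi$ itself has commutative range, and the opening observation yields $\Phi(X)=\sum_{i=1}^k\phi_i(X)P_i$ with distinct states $\phi_i$, mutually orthogonal projections $P_i$ summing to $I_H$, and $k\le n$. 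It remains to see each $\phi_i$ is extremal. If some $\phi_i$ were a nontrivial convex combination $\phi_i=t\psi_1+(1-t)\psi_2$ with $\psi_1\ne\psi_2$ and $t\in(0,1)$, I would build the UEB map $\Psi$ that replaces the block $\phi_i(X)P_i$ by $\psi_1(X)I\oplus\psi_2(X)I$ on two copies of $P_iH$ and leaves the remaining blocks untouched, together with the isometry sending $h\in P_iH$ to $\sqrt{t}\,h\oplus\sqrt{1-t}\,h$ and acting as the identity elsewhere; a direct check gives $V^*\Psi V=\Phi$ while $VH$ fails to be invariant precisely because $\psi_1\ne\psi_2$, contradicting maximality.

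For $(ii)\Rightarrow(i)$: let $\Psi:\cS\to B(K)$ be any UEB dilation of $\Phi$ with isometry $V$, where $K$ may be taken finite dimensional by Remark \ref{rem:alt-defn}(ii). Applying Theorem \ref{thm:eb} to $\Psi$ yields an isometry $W:K\to K'$ and a unital positive map $\Gamma:\cS\to B(K')$ with commutative range with $\Psi(X)=W^*\Gamma(X)W$; setting $U=WV$ gives $\Phi(X)=U^*\Gamma(X)U$ with $U$ an isometry. Decomposing $\Gamma(X)=\sum_j\gamma_j(X)Q_j$ and testing against a unit vector $h\in P_iH$ gives $\phi_i=\sum_j\|Q_jUh\|^2\,\gamma_j$, a convex combination of the distinct states $\gamma_j$. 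Extremality of $\phi_i$ then forces a unique index $j(i)$ with $\gamma_{j(i)}=\phi_i$ and $Q_jUh=0$ for $j\ne j(i)$, whence $U(P_iH)\subseteq Q_{j(i)}K'$. Consequently each $Q_j$ maps $UH=\bigoplus_i U(P_iH)$ into itself, so $UH$ reduces $\Gamma$, giving $\Gamma(X)U=U\Phi(X)$; then $\Psi(X)V=W^*\Gamma(X)U=W^*U\Phi(X)=V\Phi(X)$, which is exactly the triviality condition \eqref{eq:trivial-dil-eq}. Thus every UEB dilation of $\Phi$ is trivial.

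The main obstacle is the implication $(ii)\Rightarrow(i)$, where one must control an \emph{arbitrary} UEB dilation rather than a structured one. The device that makes this tractable is to superdilate $\Psi$ to a commutative-range map $\Gamma$ via Theorem \ref{thm:eb}, converting the problem into a statement about how the isometric copy $UH$ sits relative to the spectral projections $Q_j$; extremality of the $\phi_i$ enters through the rigidity of convex combinations of distinct states, pinning each $U(P_iH)$ inside a single $Q_{j(i)}K'$. I expect the step demanding the most care to be the descent of the reducing property from $\Gamma$ back to $\Psi$, i.e.\ the identity $\Psi(X)V=V\Phi(X)$, since it hinges on $W^*W=I_K$ together with $UH$ reducing $\Gamma$ simultaneously for all $X\in\cS$.
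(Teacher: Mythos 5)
Your proof is correct, and both implications run along essentially the same lines as the paper's: for $(i)\Rightarrow(ii)$ you reproduce the paper's chain exactly (Theorem~\ref{thm:eb} gives a dilation to a commutative-range map, maximality forces $VH$ to reduce it, hence $\Phi$ has commutative range and decomposes over orthogonal projections, and the two-copy splitting dilation $h\mapsto(\sqrt{t}h,\sqrt{1-t}h)$ forces each $\phi_i$ to be extremal — this is precisely the paper's Lemmas~\ref{lem:maximal-imp-commrang}, \ref{lem:comm-range} and \ref{lem:max}). The one genuine divergence is in $(ii)\Rightarrow(i)$: the paper takes a Holevo form $\Psi(X)=\sum_i\psi_i(X)R_i$ of the given dilation with merely \emph{positive} coefficients $R_i$ and extracts $R_iVx=0$ for $i\neq i_0$ from $\langle R_iVx,Vx\rangle=0$ via positivity, whereas you first superdilate $\Psi$ through Theorem~\ref{thm:eb} to a commutative-range map $\Gamma=\sum_j\gamma_j(\cdot)Q_j$ with honest projections and then descend the triviality back to $\Psi$ using $W^*W=I_K$; your descent identity $\Psi(X)V=W^*\Gamma(X)U=W^*U\Phi(X)=V\Phi(X)$ is valid, and the automatic distinctness of the $\gamma_j$ via characters of $C^*(\range\Gamma)$ is a clean touch. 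The trade-off is that your route costs an extra dilation and the descent step, while the paper's direct use of the Holevo form of $\Psi$ avoids both; the underlying mechanism — extremality of $\phi_i$ rigidifying a convex combination of distinct states and pinning $V(P_iH)$ into a single block — is identical in the two arguments.
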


A proof of Theorem \ref{thm:max-iff-stdform} is given in Section \ref{S:MaxUEB}. 
Using Theorems \ref{thm:eb} and \ref{thm:max-iff-stdform}, we also
observe the following.

\begin{theorem}
\label{thm:maxdil-of-UEB}
Let $\cS \subset B(E)$ be an operator system. Every UEB map $\Phi:\cS \to B(H)$ dilates
 to a maximal UEB map $\Psi:\cS \to B(K)$ with $dim(K) < \infty$.
\end{theorem}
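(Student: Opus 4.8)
The plan is to build the maximal dilation in two stages: first realize $\Phi$ as a compression of a UEB map with commutative range using Theorem \ref{thm:eb}, and then refine that map by replacing each of its ``diagonal'' states by its extremal decomposition, which itself turns out to be a dilation.

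First I would apply Theorem \ref{thm:eb} to write $\Phi(X) = V_0^*\Gamma_0(X)V_0$ for all $X \in \cS$, where $V_0 : H \to K_0$ is an isometry (with $K_0$ finite dimensional) and $\Gamma_0 : \cS \to B(K_0)$ is a UEB map with commutative range. Because $\cS$ is self-adjoint and $\Gamma_0$ is positive (hence $*$-preserving), the operators $\{\Gamma_0(X) : X \in \cS\}$ form a commuting family of normal operators, so they can be simultaneously diagonalized. This produces an orthonormal basis $\{e_j\}_{j=1}^m$ of $K_0$ and linear functionals $\phi_j : \cS \to \C$ with $\Gamma_0(X) = \sum_{j=1}^m \phi_j(X)\, e_j e_j^*$; unitality and positivity of $\Gamma_0$ force each $\phi_j$ to be a state on $\cS$.

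Next I would decompose each state. Since $\cS$ is finite dimensional, its state space is a compact convex subset of the (finite-dimensional) dual, so by the finite-dimensional Krein--Milman / Carath\'eodory theorem each $\phi_j$ is a finite convex combination $\phi_j = \sum_{l=1}^{r_j} t_{jl}\psi_{jl}$ of extremal states $\psi_{jl}$, with $t_{jl} > 0$ and $\sum_l t_{jl} = 1$. Let $K = \bigoplus_{j} \C^{r_j}$ with orthonormal basis $\{f_{jl}\}$, and define $\Psi(X) = \sum_{j,l}\psi_{jl}(X)\, f_{jl}f_{jl}^*$. This is in Holevo form, so $\Psi$ is UEB by Corollary \ref{cor:ebmaps-on-S}, and it is unital since the $\psi_{jl}$ are unital and $\sum_{j,l} f_{jl}f_{jl}^* = I_K$. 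Defining the isometry $W : K_0 \to K$ by $W e_j = \sum_l \sqrt{t_{jl}}\, f_{jl}$ (the image vectors are unit vectors lying in mutually orthogonal blocks, so $W$ is an isometry), a direct computation gives $W^*\Psi(X)W = \sum_j \phi_j(X)\, e_j e_j^* = \Gamma_0(X)$. Hence $\Phi(X) = (WV_0)^*\Psi(X)(WV_0)$ with $WV_0$ an isometry, so $\Psi$ is a UEB dilation of $\Phi$.

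Finally I would conclude maximality. Grouping the indices $(j,l)$ according to the distinct extremal states occurring among the $\psi_{jl}$, say $\chi_1,\dots,\chi_k$, and setting $P_i$ to be the projection onto the span of the corresponding basis vectors, I can rewrite $\Psi(X) = \sum_{i=1}^k \chi_i(X) P_i$ with the $\chi_i$ distinct extremal states and the $P_i$ mutually orthogonal projections summing to $I_K$ (note that $k \le \dim K$ automatically, since the $P_i$ are nonzero and orthogonal). This is exactly form $(ii)$ of Theorem \ref{thm:max-iff-stdform}, so $\Psi$ is maximal, and $\dim K < \infty$ by construction. The main obstacle I anticipate is the middle step: recognizing that refining a state into its extremal decomposition is literally a dilation with isometry coefficients $\sqrt{t_{jl}}$, and checking carefully that this refinement is compatible with the commutative-range compression from the first stage, so that the two isometries compose into a single genuine UEB dilation of $\Phi$.
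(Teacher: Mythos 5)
Your proof is correct, but it assembles the dilation in a different order from the paper. The paper first writes $\Phi$ in Holevo form $\sum_i \phi_i(X)R_i$, immediately replaces the states by linear extremal ones using Lemma \ref{lem:extremal} (absorbing the convex weights into the positive coefficients $R_i$), and only then invokes Lemma \ref{lem:eb-map-factors}: Stinespring applied to the UCP map $\gamma(x)=\sum x_i R_i$ on $\ell^\infty_r$ turns the $R_i$ into mutually orthogonal projections $\pi(e_i)$, so $\Psi = \pi\circ\eta$ lands directly in the form required by Theorem \ref{thm:max-iff-stdform}. You instead dilate first, via Theorem \ref{thm:eb}(iii), to a commutative-range UEB map $\Gamma_0$, simultaneously diagonalize its range (your normality argument via $\Gamma_0(X)^*=\Gamma_0(X^*)$ is the right justification), and then perform a second, explicit dilation $We_j=\sum_l\sqrt{t_{jl}}f_{jl}$ that splits each diagonal state into its extremal components; your verification that $W^*\Psi(X)W=\Gamma_0(X)$ and that the composite $WV_0$ is an isometry is correct. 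The state-splitting isometry you build is essentially the same device the paper uses in Lemma \ref{lem:max}, but pointed in the constructive direction. What the paper's ordering buys is economy: extremalizing before Stinespring means a single dilation suffices and no second isometry needs to be checked. What your ordering buys is transparency: it isolates the fact that refining a state into a convex combination of extremal states is itself a dilation with coefficients $\sqrt{t_{jl}}$, and it makes the final standard form $\sum_i\chi_i(X)P_i$ visible by direct inspection of an orthonormal basis rather than through the abstract $*$-homomorphism $\pi$. Both routes terminate identically by citing Theorem \ref{thm:max-iff-stdform}.
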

A proof of Theorem \ref{thm:maxdil-of-UEB} can be found in Section \ref{S:MaxUEB}.

\subsection{C*-extreme UEB maps} In \cite{BDMS}, the authors obtain various
 characterizations of $C^*$-extreme UEB maps between matrix algebras. 
A primary objective of ours is to characterize $C^*$-extreme UEB maps defined
 on operator systems of matrices. One of our main results in this article 
along these lines is the following. 

\begin{theorem}
\label{thm:max-iff-C*ext}
Let $\cS \subset B(E)$ be an operator system
 and $\Phi: \cS \to B(H)$ a UEB map. Then the following statements are equivalent.
\begin{itemize}
\item[$(i)$] $\Phi$ is a maximal.
\item[$(ii)$] $\Phi$ is a $C^*$-extreme point of UEB$(\cS, B(H))$.
\item[$(iii)$] $\Phi$ is a linear extreme point of UEB$(\cS,B(H))$ with commutative range.
\end{itemize} 
\end{theorem}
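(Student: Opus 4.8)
The plan is to establish the two equivalences $(i)\Leftrightarrow(iii)$ and $(i)\Leftrightarrow(ii)$, arranged so that the structural content sits in Theorem~\ref{thm:max-iff-stdform}, the direction $(i)\Rightarrow(ii)$ is a dilation argument, and the only genuinely hard step is $(ii)\Rightarrow(i)$. Throughout I would freely use two facts that follow at once from Corollary~\ref{cor:ebmaps-on-S}: a compression $W^*\Psi(\cdot)W$ of a UEB map $\Psi$ by an isometry $W$ is again UEB, since its Holevo form is $\sum_j\phi_j(\cdot)\,W^*R_jW$ and $W^*\Psi(I)W=I$; and dually a finite direct sum of UEB maps is UEB.

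For $(iii)\Rightarrow(i)$, suppose $\Phi$ is a linear extreme point with commutative range. Since $\cS$ is self-adjoint and $\Phi$ is positive, $\range(\Phi)$ is a commuting, $*$-closed family, so the $C^*$-algebra it generates is a finite-dimensional commutative algebra containing $I_H=\Phi(I_E)$; its minimal projections $P_1,\dots,P_k$ are then mutually orthogonal, sum to $I_H$, and simultaneously diagonalize the range, whence $\Phi(X)=\sum_{i=1}^k\phi_i(X)P_i$ with $\phi_i$ determined by $P_i\Phi(X)=\phi_i(X)P_i$ and automatically a state. If some $\phi_i=\tfrac12(\alpha+\beta)$ were a proper convex combination of distinct states, then replacing $\phi_i$ by $\alpha$, resp.\ $\beta$, yields UEB maps $\Phi',\Phi''$ with $\Phi=\tfrac12\Phi'+\tfrac12\Phi''$ and $\Phi'\neq\Phi$, contradicting linear extremality; so each $\phi_i$ is extremal. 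Merging equal states (combining the corresponding orthogonal projections) makes the $\phi_i$ distinct, and $k\le n$ because the $P_i$ are nonzero and orthogonal in $B(\C^n)$. Theorem~\ref{thm:max-iff-stdform} then gives that $\Phi$ is maximal.

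For $(i)\Rightarrow(ii)$, let $\Phi$ be maximal and let $\Phi(X)=\sum_{l=1}^{p}T_l^*\Phi_l(X)T_l$ be any proper $C^*$-convex combination, so the $T_l$ are invertible, $\sum_lT_l^*T_l=I_H$, and each $\Phi_l$ is UEB. Put $\Psi=\bigoplus_l\Phi_l$ on $K=\bigoplus_lH$ (a UEB map) and define the isometry $V\colon H\to K$ by $Vx=\bigoplus_lT_lx$; then $V^*\Psi(X)V=\sum_lT_l^*\Phi_l(X)T_l=\Phi(X)$, so $\Psi$ is a UEB dilation of $\Phi$. Maximality forces this dilation to be trivial, i.e.\ $VH$ reduces $\Psi(\cS)$ by Remark~\ref{rem:alt-defn}, which yields both $\Psi(X)V=V\Phi(X)$ and $V^*\Psi(X)=\Phi(X)V^*$; in the block picture these read $\Phi_l(X)T_l=T_l\Phi(X)$ and $T_l^*\Phi_l(X)=\Phi(X)T_l^*$. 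Combining them gives $T_l^*T_l\,\Phi(X)=\Phi(X)\,T_l^*T_l$, so $|T_l|=(T_l^*T_l)^{1/2}$ commutes with $\range(\Phi)$; writing the polar decomposition $T_l=U_l|T_l|$ with $U_l$ unitary, we get $\Phi_l(X)=T_l\Phi(X)T_l^{-1}=U_l\Phi(X)U_l^*$. Hence every $\Phi_l$ is unitarily equivalent to $\Phi$, the combination is trivial, and $\Phi$ is $C^*$-extreme. Combined with Remark~\ref{rem:cstar-ext-is-linearext} and the fact that the standard form has commutative range, this also gives $(i)\Rightarrow(iii)$.

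The main obstacle is $(ii)\Rightarrow(i)$, which I would prove in contrapositive form: a non-maximal UEB map is a \emph{nontrivial} proper $C^*$-convex combination, hence not $C^*$-extreme. If $\Phi$ is not maximal, Theorem~\ref{thm:maxdil-of-UEB} provides a finite-dimensional maximal dilation $\Psi(X)=\sum_j\psi_j(X)Q_j$ with isometry $V\colon H\to K$, $\Phi=V^*\Psi V$, and $VH$ not reducing $\Psi(\cS)$; equivalently the effects $R_j=V^*Q_jV$ form a POVM on $H$ that is not projection-valued (this is the POVM-versus-PVM dichotomy underlying maximality, since $\sum_jR_j=I_H$ and orthogonal projections are exactly a PVM). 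From such a genuinely spread-out dilation one must manufacture invertible coefficients $T_1,T_2\in B(H)$ with $T_1^*T_1+T_2^*T_2=I_H$ and UEB maps $\Phi_1,\Phi_2$, realized as compressions of $\Psi$ along two isometries obtained by perturbing $V$ in the directions detected by the nonzero off-diagonal blocks of $\Psi$ relative to $VH$, so that $\Phi=T_1^*\Phi_1T_1+T_2^*\Phi_2T_2$ non-trivially. The crux is to secure invertibility of the coefficients and non-triviality of the combination simultaneously; this is precisely where finite-dimensionality and the rigidity of the standard form enter, and it parallels the Farenick--Morenz analysis of $C^*$-extreme UCP maps. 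Once $(ii)\Rightarrow(i)$ is in place, the implications above close the cycle and give all three equivalences.
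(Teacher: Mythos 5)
Your implications $(i)\Rightarrow(ii)$, $(i)\Rightarrow(iii)$ and $(iii)\Rightarrow(i)$ are correct and essentially coincide with the paper's arguments: the dilation $\Psi=\bigoplus_l\Phi_l$ compressed by $V x=\bigoplus_l T_l x$, triviality forcing $\Phi_l(X)T_l=T_l\Phi(X)$, and a polar-decomposition/commutation step yielding unitary equivalence (the paper derives the commutation from $\Psi(X)VV^*=VV^*\Psi(X)$ to get $\Phi_l(X)T_lT_l^*=T_lT_l^*\Phi_l(X)$, whereas you combine the two intertwining relations to get $T_l^*T_l$ commuting with $\range(\Phi)$ --- an equally valid variant); and for $(iii)\Rightarrow(i)$ you reproduce Lemma~\ref{lem:comm-range} plus the midpoint argument showing each $\phi_i$ is extremal, then invoke Theorem~\ref{thm:max-iff-stdform}, exactly as the paper does.

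The genuine gap is $(ii)\Rightarrow(i)$, which is the heart of the theorem and which you have only sketched, not proved. You reduce to: given a non-maximal $\Phi$ with Holevo form $\Phi(X)=\sum_j\psi_j(X)R_j$ whose effects $\{R_j\}$ do not form a PVM, ``manufacture'' invertible coefficients $T_1,T_2$ and UEB maps $\Phi_1,\Phi_2$ giving a nontrivial proper $C^*$-convex decomposition. But the natural decomposition $\Phi=\sum_j\sqrt{R_j}\,(\psi_j(X)I_H)\sqrt{R_j}$ has coefficients $\sqrt{R_j}$ that are in general \emph{not} invertible, and repairing this --- grouping terms, conjugating by unitaries to align kernels, and producing invertible coefficients while certifying that the resulting combination is non-trivial --- is precisely the content of the Farenick--Morenz Techniques A, B and C that the paper adapts. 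You explicitly flag this as ``the crux'' without carrying it out, so the contrapositive you propose is a restatement of the problem rather than a solution. For comparison, the paper argues the direction directly (not by contrapositive): it writes a $C^*$-extreme $\Phi$ as a positive $C^*$-convex combination of scalar maps $g_i(X)I_H$ with a \emph{minimal} number $m$ of terms, uses Technique~A to show some coefficient must have norm one, then block-decomposes, applies Techniques~C and~B to peel off a direct summand $Q_1\Psi_1|_{\range(Q_1)}$, shows the complementary summand is again $C^*$-extreme, and iterates (finite-dimensionality terminating the induction) to land in the standard form of Theorem~\ref{thm:max-iff-stdform}. Until you supply an actual construction for your perturbed isometries with invertible coefficients and a proof of non-triviality, the cycle of implications does not close.
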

A proof of Theorem \ref{thm:max-iff-C*ext} is given in Section \ref{S:C*-ext}. 
Combined with Theorem~\ref{thm:max-iff-stdform}, the equivalence $(i) \iff (ii)$ of 
Theorem~\ref{thm:max-iff-C*ext} is an operator 
system generalization of the equivalence $(i) \iff (v)$ of 
\cite[Theorem~5.3]{BDMS}, which says that $\Phi \in$ UEB($B(E), B(H)$) is $C^*$-extreme
 if and only if it has the form given in equation \eqref{eq:stdform}. \\  

After establishing a characterization as given above, 
it is only natural to ask whether $C^*$-extreme UEB maps defined on operator systems in 
$B(E)$ extend to $C^*$-extreme UEB maps on the whole algebra. This is still only a partially 
answered question for $C^*$-extreme UCP maps. Please see \cite{Z} for more details and 
some results on this problem. Here, as an application of Theorems  
\ref{thm:max-iff-stdform} and \ref{thm:max-iff-C*ext}, we obtain the following extension 
result.


\begin{corollary}
\label{cor:extn}
Let $\cS \subset B(E)$ be an operator system and $\Phi : \cS \to B(H)$  be a $C^*$-extreme UEB 
map. There exists a $C^*$-extreme UEB map $\Psi :B(E) \to B(H)$ such that $\Psi|_{\cS} = \Phi$.
\end{corollary}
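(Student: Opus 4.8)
The plan is to exploit the explicit standard form of $C^*$-extreme maps rather than the raw extension theorem. First, since $\Phi$ is $C^*$-extreme, Theorem~\ref{thm:max-iff-C*ext} shows that $\Phi$ is maximal, and Theorem~\ref{thm:max-iff-stdform} then supplies the standard form
\[
\Phi(X) = \sum_{i=1}^k \phi_i(X) P_i, \qquad X \in \cS,
\]
where $\phi_1, \dots, \phi_k$ are distinct linear extremal states on $\cS$, $k \le n$, and $P_1, \dots, P_k$ are mutually orthogonal projections on $H$ with $\sum_{i=1}^k P_i = I_H$. The goal is then to lift each $\phi_i$ to a pure (linear extremal) state $\psi_i$ on $B(E)$ and set $\Psi(X) = \sum_{i=1}^k \psi_i(X) P_i$; the same two theorems, now applied with the operator system taken to be $B(E)$ itself, will certify that $\Psi$ is $C^*$-extreme.

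The key step is the state extension. Each $\phi_i$ is a state, hence a UCP map into $\C = B(\C)$, so it extends to a state on $B(E)$; equivalently, the weak-$*$ continuous affine restriction map $\rho$ from the state space of $B(E)$ onto the state space of $\cS$ is surjective. For the extreme point $\phi_i$ I would observe that the fibre $\rho^{-1}(\phi_i)$ is a nonempty, weak-$*$ compact, convex \emph{face} of the state space of $B(E)$: if a point of the fibre is written as a proper convex combination of two states $\psi', \psi''$ on $B(E)$, then applying $\rho$ and invoking extremality of $\phi_i$ forces $\rho(\psi') = \rho(\psi'') = \phi_i$, so both already lie in the fibre. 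By the Krein--Milman theorem the fibre has an extreme point $\psi_i$, and because the fibre is a face, $\psi_i$ is in fact extreme in the full state space, i.e.\ a pure (linear extremal) state on $B(E)$ restricting to $\phi_i$ on $\cS$.

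With these lifts in hand, define $\Psi(X) = \sum_{i=1}^k \psi_i(X) P_i$ for $X \in B(E)$. Since each $\psi_i$ is a state and each $P_i$ is positive, the Holevo form characterization (Corollary~\ref{cor:ebmaps-on-S}, or Theorem~\ref{thm:ebmaps}) shows $\Psi$ is EB, and $\Psi(I_E) = \sum_{i=1}^k P_i = I_H$ makes it unital, so $\Psi \in {}$UEB$(B(E), B(H))$. Because $\psi_i|_{\cS} = \phi_i$ for each $i$, we obtain $\Psi|_{\cS} = \Phi$. Finally, the $\psi_i$ are distinct (their restrictions $\phi_i$ already are) linear extremal states on $B(E)$, the $P_i$ are mutually orthogonal projections summing to $I_H$, and $k \le n$; thus $\Psi$ is exactly in the standard form of Theorem~\ref{thm:max-iff-stdform} for the operator system $B(E)$, hence maximal, and therefore $C^*$-extreme by Theorem~\ref{thm:max-iff-C*ext}.

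The main obstacle is the middle step: guaranteeing that the extremal states $\phi_i$ lift not merely to states but to \emph{pure} states on $B(E)$, without collapsing distinct $\phi_i$ onto a common extension. The face-plus-Krein--Milman argument disposes of both difficulties simultaneously — purity because extreme points of a face of the state space are extreme in the whole space, and distinctness for free because the lifts restrict to the already-distinct $\phi_i$. The only routine point to verify is that passing from $\cS$ to $B(E)$ disturbs none of the structural data required by Theorem~\ref{thm:max-iff-stdform}; but the projections $P_i$, their mutual orthogonality, and the bound $k \le n$ are untouched by the construction and so are inherited verbatim.
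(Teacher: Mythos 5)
Your proof is correct and follows essentially the same route as the paper: put $\Phi$ in the standard form of Theorem~\ref{thm:max-iff-stdform}, lift each linear extremal state $\phi_i$ to a linear extremal state $\psi_i$ on $B(E)$, and reassemble $\Psi(X)=\sum_i \psi_i(X)P_i$, certifying $C^*$-extremality via Theorems~\ref{thm:max-iff-stdform} and~\ref{thm:max-iff-C*ext}. The only difference is that where the paper cites \cite[Proposition 1.2.4]{Z} for the pure-state lifting, you prove it directly with the (correct) observation that the fibre of the restriction map over an extreme state is a compact convex face, whose Krein--Milman extreme points are therefore pure states of $B(E)$.
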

A proof of Corollary \ref{cor:extn} is given in Section \ref{S:C*-ext}.\\

In \cite{BDMS}, a Krein-Milman type theorem was established for the 
compact convex set UEB$(B(E),B(H))$. To be precise, it was shown that the
 set UEB$(B(E),B(H))$ equals the $C^*$-convex hull of its $C^*$-extreme points. 
As another application of our main results, we obtain the following operator system analog of this 
result. 

\begin{corollary}
\label{cor:kre-mil-for-ueb}
\text{UEB}$(\cS,B(H))$ equals the $C^*$-convex hull of its $C^*$-extreme points.
\end{corollary}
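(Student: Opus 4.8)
The plan is to prove the two inclusions separately. One direction is immediate: since UEB$(\cS, B(H))$ is $C^*$-convex, it contains the $C^*$-convex hull of any of its subsets, in particular of its own $C^*$-extreme points. So the substance lies in the reverse inclusion. First I would fix an arbitrary $\Phi \in$ UEB$(\cS, B(H))$ and manufacture an explicit representation of $\Phi$ as a $C^*$-convex combination of $C^*$-extreme points, the key engine being the maximal dilation theorem.

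Concretely, by Theorem \ref{thm:maxdil-of-UEB} I would dilate $\Phi$ to a maximal UEB map $\Psi: \cS \to B(K)$ with $\dim K < \infty$, so that there is an isometry $V: H \to K$ with $\Phi(X) = V^*\Psi(X)V$ for all $X \in \cS$. Since $\Psi$ is maximal, Theorem \ref{thm:max-iff-stdform} supplies the standard form $\Psi(X) = \sum_{i=1}^k \phi_i(X) P_i$, where the $\phi_i$ are distinct extremal states on $\cS$ and the $P_i$ are mutually orthogonal projections with $\sum_{i=1}^k P_i = I_K$. Compressing by $V$ then yields $\Phi(X) = \sum_{i=1}^k \phi_i(X) R_i$ with $R_i := V^* P_i V \succeq 0$ and $\sum_{i=1}^k R_i = V^* I_K V = I_H$, the last equality because $V$ is an isometry. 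Setting $T_i := R_i^{1/2}$ and $\Phi_i(X) := \phi_i(X) I_H$, one checks that $T_i^* \Phi_i(X) T_i = \phi_i(X) R_i$ and $\sum_{i=1}^k T_i^* T_i = I_H$, so that $\Phi(X) = \sum_{i=1}^k T_i^* \Phi_i(X) T_i$ is a $C^*$-convex combination of the maps $\Phi_i$.

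It remains to see that each building block $\Phi_i$ is genuinely a $C^*$-extreme point of UEB$(\cS, B(H))$, and this is the step I expect to be the crux of the argument. Each $\Phi_i$ is unital because $\phi_i$ is a state, and EB by Corollary \ref{cor:ebmaps-on-S}; moreover $\Phi_i(X) = \phi_i(X) I_H$ is precisely of the standard form in equation \eqref{eq:stdform}, with a single extremal state $\phi_i$ and the lone projection $P_1 = I_H$. Hence $\Phi_i$ is maximal by Theorem \ref{thm:max-iff-stdform}, and therefore $C^*$-extreme by Theorem \ref{thm:max-iff-C*ext}. This identification is not obvious a priori, but once the standard-form characterization is invoked it falls out immediately from the extremality of $\phi_i$. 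Combining this with the representation of the previous paragraph exhibits $\Phi$ in the $C^*$-convex hull of the $C^*$-extreme points of UEB$(\cS, B(H))$, completing the reverse inclusion and hence the proof.
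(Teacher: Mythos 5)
Your proof is correct and is essentially the paper's argument: both reduce to writing $\Phi(X)=\sum_i \sqrt{R_i}\,(\phi_i(X)I_H)\sqrt{R_i}$ with the $\phi_i$ linear extremal states and then identifying each block $\phi_i(X)I_H$ as $C^*$-extreme via Theorems \ref{thm:max-iff-stdform} and \ref{thm:max-iff-C*ext}. The only difference is how the extremal-state representation is obtained: the paper packages it into Lemma \ref{lem:ueb} via Lemma \ref{lem:extremal}, whereas you dilate to a maximal map and compress --- a slightly longer route through the same machinery, since Theorem \ref{thm:maxdil-of-UEB} itself rests on Lemma \ref{lem:extremal}.
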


A proof of Corollary \ref{cor:kre-mil-for-ueb} is given in Section \ref{S:C*-ext}.

\section{Extensions of EB maps}
\label{S:extn}





In this section we give a proof of Theorem \ref{thm:eb-extn}. 
We first include some necessary definitions adapted to our finite dimensional setting.  Please see \cite[Chapter 5]{St} for more details. 

Let $L$ and $M$ be finite dimensional Hilbert spaces. Let $\mathcal P(M)$ denote the cone of all positive linear maps from $B(M)$ to $B(M)$. 
A closed convex cone $\mathcal{C} \subset \mathcal P(M)$ is said to be a \df{mapping cone} if for each nonzero $A \in B(M)^+$, there exists a $\varphi\in \cC$ such that $\varphi(A) \neq 0$ and
\[
 \phi \circ \sigma \circ \psi \in \mathcal{C},
 \]
  for all $\sigma \in \mathcal{C}$ and CP maps $\phi,\psi: B(M) \to B(M)$. 
The mapping cone $\cC$ is said to be \textbf{symmetric} if $\phi \in \cC$ implies both $\phi^*$ 
and $t \circ \phi \circ t$ are in $\cC$, 
where $t$ is the transpose operator in $B(M)$ induced by a fixed
 orthonormal basis of $M$ and $\phi^*:B(M) \rightarrow B(M)$ is {\bf the adjoint of $\phi$} 
with respect to the Hilbert-Schmidt inner product on $B(M)$, i.e., $\phi^*$ is determined by 
\[
\langle \phi(A), B\rangle= \langle A, \phi^*(B)\rangle,
\]
for all $A,B \in B(M)$, where $\langle C, D \rangle := \trace(D^*C)$. 
\begin{remark}
Typical examples of mapping cones that are symmetric 
are UCP$(B(M))$ and EB$(B(M))$.
\end{remark}

Suppose that $\cS\subset B(L)$ is an operator system and $\mathcal{C} \subset \mathcal P(M)$ is a mapping cone.
 A linear map $\phi: \cS \to B(M)$ is said to be \df{$\mathcal{C}$-positive} if the corresponding dual functional 
$s_\phi:B(M) \otimes \cS \to \C$ takes positive values on the cone 
\[
P(\cS, \mathcal{C}) := \{ Y \in B(M) \otimes \cS : Y = Y^*, (\alpha \otimes\tti)(Y) \succeq 0 \text{ for all $\alpha$ in $\cC$}\},
\] where $\tti$ is the identity map on $\cS$. 
\begin{lemma}
\label{lem:mappingcone}
 Let $\cS \subset B(E)$ be an operator system and $\phi: \cS \to B(H)$ be an EB map.
 If $\cC$ denotes the mapping cone EB$(B(H))$, then $\phi$ is $\cC$-positive. 
\end{lemma}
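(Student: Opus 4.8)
The plan is to unwind both definitions and reduce the claim to a positivity statement for the dual functional $s_\phi$ on an explicitly identified cone, and then to recognize that statement as one of the standard equivalent formulations of the entanglement breaking property.

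First I would simplify the cone $P(\cS,\cC)$ for $\cC=\mathrm{EB}(B(H))$. A Hermitian $Y\in B(H)\otimes\cS$ lies in $P(\cS,\cC)$ exactly when $(\alpha\otimes\tti)(Y)\succeq 0$ for every EB map $\alpha$ on $B(H)$. Since $B(H)$ is a full matrix algebra, Theorem~\ref{thm:ebmaps} lets me write each such $\alpha$ in Holevo form $\alpha=\sum_j\omega_j(\cdot)R_j$ with $\omega_j$ states and $R_j\succeq0$; conversely each single term $\omega(\cdot)R$ is itself EB. Computing $(\omega(\cdot)R\otimes\tti)(Y)=R\otimes(\omega\otimes\tti)(Y)$ and letting $R$ range over $B(H)^+$ shows that membership in $P(\cS,\cC)$ is equivalent to $(\omega\otimes\tti)(Y)\succeq0$ for every state $\omega$, i.e. to $Y$ being \emph{block positive}: $\langle Y(\eta\otimes\zeta),\eta\otimes\zeta\rangle\ge0$ for all $\eta\in H,\ \zeta\in E$. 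Thus I must show $s_\phi(Y)\ge0$ for all block positive $Y$.

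Next I would recast $s_\phi(Y)$ as a trace pairing. Using the maximally entangled vector $\Omega=\sum_i e_i\otimes e_i\in H\otimes H$, a direct computation from $s_\phi(X\otimes A)=\trace(\phi(A)X^t)$ gives $s_\phi(Y)=\langle(\tti_n\otimes\phi)(Y)\,\Omega,\Omega\rangle=\trace\!\big(Y\,C_{\phi^*}\big)$, where $C_{\phi^*}=\sum_{i,j}e_ie_j^*\otimes\phi^*(e_ie_j^*)$ is the Choi matrix of the Hilbert--Schmidt adjoint $\phi^*\colon B(H)\to\cS$. Identifying a block positive $Y$ with the Choi matrix $C_\gamma$ of a positive map $\gamma\colon B(H)\to\cS$ (Choi--Jamio{\l}kowski), this becomes $s_\phi(Y)=\langle C_{\phi\circ\gamma}\,\Omega,\Omega\rangle$. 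Hence the lemma reduces to the inequality $\langle C_{\phi\circ\gamma}\Omega,\Omega\rangle\ge0$ for every positive $\gamma$, for which it suffices that $\phi\circ\gamma\colon B(H)\to B(H)$ be completely positive, so that $C_{\phi\circ\gamma}\succeq0$; equivalently, it suffices that $C_{\phi^*}$ be separable, whence $\trace(Y\,C_{\phi^*})\ge0$ by the entanglement witness duality (a separable operator pairs non-negatively with every block positive one).

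The main obstacle is precisely this last reduction: that composing the EB map $\phi$ with an arbitrary positive $\gamma$ produces a CP map (equivalently, that $\tti_k\otimes\phi$ sends block positive elements of $M_k\otimes\cS$ to positive ones, equivalently that $\phi^*$ is EB). Over a full matrix algebra this is one of the standard equivalent descriptions of entanglement breaking and is immediate from the Holevo form: writing $\phi=\sum_t\psi_t(\cdot)R_t$ gives $\phi\circ\gamma=\sum_t(\psi_t\circ\gamma)(\cdot)R_t$ with each $\psi_t\circ\gamma$ a positive functional, hence CP. The difficulty is that $\phi$ is defined only on the operator system $\cS$, where the positive cone of $M_k\otimes\cS$ is not self-dual, so this equivalence does not follow formally from the separability definition; moreover I may not invoke the Holevo form on $\cS$ (Corollary~\ref{cor:ebmaps-on-S}) or the extension theorem (Theorem~\ref{thm:eb-extn}), both of which are downstream of this lemma. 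I would therefore execute this step intrinsically, working with the dual functional $s_\phi$ and the Hilbert--Schmidt projection $\cS\hookrightarrow B(E)$ to push the required complete positivity of $\phi\circ\gamma$ back to the separability definition of $\phi$ evaluated on suitably chosen positive elements (or, in the language of Section~\ref{S:extn}, to the mapping-cone duality for the symmetric cone $\mathrm{EB}(B(H))$). This transfer of the entanglement breaking characterization across the operator-system restriction is the technical heart of the argument.
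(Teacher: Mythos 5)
Your reduction is set up correctly and its first half coincides with the paper's: for $\cC=\mathrm{EB}(B(H))$ you identify membership in $P(\cS,\cC)$ with block positivity of $Y$, i.e.\ with $(\omega\otimes\tti)(Y)\succeq 0$ for all states $\omega$ on $B(H)$, by testing against the single-term EB maps $\gamma=\omega(\cdot)Z$ and computing $(\gamma\otimes\tti)(Y)=Z\otimes(\omega\otimes\tti)(Y)$; only the containment $P(\cS,\cC)\subseteq K$ (where $K$ is the block positive cone) is actually needed, and your argument for it is the paper's argument. The gap is in the second half. The assertion that $s_\phi(Y)\ge 0$ for every block positive $Y\in B(H)\otimes\cS$ is not proved in your proposal: you reformulate it (as complete positivity of $\phi\circ\gamma$ for every positive $\gamma$, equivalently separability of $C_{\phi^*}$), you correctly observe that this does not follow formally from the separability definition of EB on an operator system --- the positive cone of $M_k\otimes\cS$ is not self-dual, $\phi$ cannot be evaluated on matrix units, and the Holevo form on $\cS$ (Corollary~\ref{cor:ebmaps-on-S}) and the extension theorem (Theorem~\ref{thm:eb-extn}) are downstream of this lemma --- and you then announce that you ``would execute this step intrinsically'' without executing it. That step is the entire content of the lemma: positivity of $s_\phi$ on $K$ is, by cone duality and compactness, essentially equivalent to $\phi$ admitting a Holevo form, i.e.\ to the very results you rightly refuse to invoke.

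The paper does not derive this step from first principles either; it imports it wholesale by citing \cite[Proposition 1(ii)]{St09} for the implication that an EB map has dual functional nonnegative on $K$, and then supplies only the easy containment $P(\cS,\cC)\subseteq K$ that you also have. So your proposal is structurally the paper's proof minus its one essential external input. To close the gap you must either invoke that result of St\o rmer (or an equivalent duality statement for separable functionals on $B(H)\otimes\cS$) or genuinely carry out the ``intrinsic'' transfer you defer, which is nontrivial for exactly the reasons you list.
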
 

\begin{proof}
Since $\phi$ is an EB map, it follows 
from {\cite[Proposition 1(ii)]{St09}} that the dual functional 
  $s_\phi$ takes positive values on the cone $K$, where 
  \[
  K:=\{ X\in (B(H) \otimes \cS) : (\omega \otimes \tti)(X)\succeq 0 \quad  \textrm{for all  states $\omega$ on $B(H)$} \}.
  \]
Thus it suffices to show that $P(\cS, \cC)\subseteq K$.
To prove this statement, let $\omega$ be a state defined on $B(H)$. Suppose 
also that $Y= \sum_i A_i \otimes B_i \in P(\cS, \cC)$ and $Z \in B(H)^+$ is a fixed matrix 
of rank one. Define the linear map $\gamma:B(H) \rightarrow B(H)$ by 
$\gamma(X) = \omega(X)Z$. Observe that $\gamma$ has the Holevo form (See Theorem 
\ref{thm:ebmaps}) and hence $\gamma \in \cC$.  Since $(\alpha \otimes \tti)(Y) \succeq 0$ for 
all $\alpha \in \cC$, it follows that 
\[
(\gamma \otimes \tti)(Y) = 
\sum_i \omega(A_i)Z\otimes B_i = Z\otimes \left[ \sum_i\omega(A_i)B_i\right]  \succeq 0 
\] 
and therefore 
 $\sum_i \omega(A_i)B_i = (\omega \otimes \tti)(Y) \succeq 0 $.
Thus $Y\in K$ and the proof is complete.
\end{proof}


\begin{lemma}
\label{lem:approx-eb-extn}
Let $\cS \subset B(E)$ be an operator system and $\phi: \cS \to B(H)$ be an EB map. Given $\epsilon >0$, 
there exists an EB map $\Phi: B(E) \to B(H)$ such that $\|\Phi(A) - \phi(A)\| < \epsilon \|A\|$ for all $A\in \cS$.
\end{lemma}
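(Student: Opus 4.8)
The plan is to show that $\phi$ lies in the norm-closure of the convex cone
\[
R := \{\, \Psi|_{\cS} : \Psi : B(E) \to B(H) \text{ is EB} \,\},
\]
regarded inside the finite-dimensional real vector space $V$ of Hermitian-preserving linear maps $\cS \to B(H)$ (every EB map is positive, hence Hermitian-preserving, so $\phi$ and all of $R$ do live in $V$). Membership $\phi \in \overline{R}$ is exactly the assertion of the lemma: for each $\epsilon>0$ it produces $\Psi$ EB on $B(E)$ with $\|\Psi|_{\cS}-\phi\|<\epsilon$ in the operator norm on maps, all norms being equivalent in finite dimensions. Note that $R$ is genuinely a convex cone, since EB maps are closed under addition and nonnegative scaling and restriction is linear; it need \emph{not} be closed, which is precisely why one obtains an approximate rather than an exact extension at this stage, the gap being closed later in Theorem \ref{thm:eb-extn}.

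First I would argue by contradiction. If $\phi\notin\overline{R}$, then, $\overline{R}$ being a closed convex cone in a finite-dimensional space, the Hahn--Banach separation theorem yields a real-linear functional $\Lambda$ on $V$ with $\Lambda\ge 0$ on $\overline{R}$ but $\Lambda(\phi)<0$. I would then represent $\Lambda$ through the dual-functional pairing: working in the real duality between Hermitian-preserving maps $\psi$ and self-adjoint elements $W\in B(H)\otimes\cS$ given by $\psi\mapsto s_\psi(W)$ — a nondegenerate pairing of two real spaces of equal dimension $n^2\dim\cS$ — there is a self-adjoint $W\in B(H)\otimes\cS$ with $\Lambda(\psi)=s_\psi(W)$ for all $\psi\in V$.

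Next I would show that $W$ is block positive. Since $\Lambda\ge 0$ on $R$ and $s_{\Psi|_{\cS}}(W)=s_\Psi(W)$ (because $W\in B(H)\otimes\cS$, so only $\Psi|_{\cS}$ enters), we get $s_\Psi(W)\ge 0$ for every EB map $\Psi:B(E)\to B(H)$. By the equivalences in Theorem \ref{thm:ebmaps}, EB maps on the full algebra are exactly those with separable Choi matrix; under trace duality the separable cone is dual to the block-positive cone $K_{B(E)}=\{X:(\omega\otimes\tti)(X)\succeq 0\text{ for all states }\omega\}$, so the cone of EB maps on $B(E)$ is dual to $K_{B(E)}$. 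The bipolar theorem then forces $W\in K_{B(E)}$, and since $W\in B(H)\otimes\cS$ this places $W$ in the block-positive cone $K$ of $B(H)\otimes\cS$ appearing in Lemma \ref{lem:mappingcone}. But $\phi$ is EB, so by the fact from \cite{St09} recalled in the proof of Lemma \ref{lem:mappingcone}, $s_\phi$ is nonnegative on $K$; in particular $s_\phi(W)\ge 0$, contradicting $\Lambda(\phi)=s_\phi(W)<0$. Hence $\phi\in\overline{R}$, which is the claim.

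The step I expect to be the main obstacle is the duality computation pinning down $W$: one must verify cleanly, and crucially \emph{without} invoking the exact extension Theorem \ref{thm:eb-extn} (to avoid circularity), that nonnegativity of $s_\Psi(W)$ against all EB maps on the full algebra $B(E)$ is equivalent to block positivity of $W$. This rests only on Theorem \ref{thm:ebmaps} (separability of the Choi matrix) together with the standard trace duality between separable and block-positive operators, and on the bipolar theorem in the finite-dimensional real setting. A secondary technical point is confirming that the dual-functional pairing restricts to a nondegenerate \emph{real} pairing between Hermitian-preserving maps and self-adjoint elements, so that the separating functional is genuinely of the form $s_{(\cdot)}(W)$ with $W=W^*$; this is a routine check on the formula $s_\psi(X\otimes A)=\trace(\psi(A)X^t)$.
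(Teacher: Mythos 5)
Your proof is correct, but it takes a genuinely different route from the paper's. The paper first shows (Lemma \ref{lem:mappingcone}) that $\phi$ is $\cC$-positive for the mapping cone $\cC=\text{EB}(B(H))$, then invokes St\o rmer's approximation theorem \cite[Theorem 5.1.13]{St} to produce approximants of the structured form $\sum_i \alpha_i^{(j)}\circ\psi_i^{(j)}$ with $\alpha_i^{(j)}\in\cC$ and $\psi_i^{(j)}$ CP on $\cS$, extends each $\psi_i^{(j)}$ to $B(E)$ via Arveson's extension theorem, and composes to get EB maps on the full algebra restricting close to $\phi$. You instead run the Hahn--Banach separation directly against the closed cone of restrictions of EB maps, represent the separating functional by a self-adjoint witness $W\in B(H)\otimes\cS$, show $W$ lies in the block-positive cone $K$, and contradict the nonnegativity of $s_\phi$ on $K$ guaranteed by \cite[Proposition 1(ii)]{St09} --- the same external input that drives the paper's Lemma \ref{lem:mappingcone}. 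Your version is more self-contained (it bypasses both St\o rmer's Theorem 5.1.13 and the Arveson extension theorem), while the paper's version buys the explicit form $\text{EB}\circ\text{CP}$ of the approximants, which is what makes the Carath\'eodory remark after the lemma possible. One simplification worth noting: the step you single out as the main obstacle is easier than you fear, and needs neither the bipolar theorem nor the full witness characterization of separability. To get $W\in K$ from $s_\Psi(W)\ge 0$ for all EB $\Psi:B(E)\to B(H)$, it suffices to test against the rank-one Holevo maps $\Psi=\varphi(\cdot)R$ with $\varphi$ a state on $B(E)$ and $R\in B(H)^+$: writing $W=\sum_i X_i\otimes A_i$ one gets $s_\Psi(W)=\trace(R)\,\varphi\bigl((\omega\otimes\tti)(W)\bigr)$ for the state $\omega(X)=\trace(R^tX)/\trace(R)$, and letting $\varphi$ and $R$ vary forces $(\omega\otimes\tti)(W)\succeq 0$ for every state $\omega$ on $B(H)$. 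This is exactly the computation in the paper's proof of Lemma \ref{lem:mappingcone}, it uses only that Holevo-form maps are EB, and it removes any worry of circularity with Theorem \ref{thm:eb-extn}. The secondary point about the real nondegenerate pairing between Hermitian-preserving maps and self-adjoint elements of $B(H)\otimes\cS$ is indeed a routine dimension count once one writes $(B(H)\otimes\cS)_{sa}=B(H)_{sa}\otimes_{\R}\cS_{sa}$.
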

\begin{proof}
Let $\mathcal C =$ EB$(B(H))$. 
Since $\phi: \cS \to B(H)$ is an EB map, it 
follows from Lemma \ref{lem:mappingcone} that $\phi$ is $\cC$-positive.
By observing that $\cC$ is a symmetric mapping cone, it follows from \cite[Theorem 5.1.13]{St} that
 there exists a sequence $\phi_j:= \sum_{i=1}^{r_j} \alpha_i^{(j)} \circ \psi_i^{(j)} \in \cC$ 
with $\alpha_i^{(j)}\in \cC$ and CP maps 
$\psi_i^{(j)}: \cS \to B(H)$ such that
 $\phi_j \to \phi$ in norm, in $B(H)$. 
 Note that norm topology  on $B(H)$ coincides with the BW-topology due to 
the finite dimensionality of $B(H)$. 
By the Arveson extension theorem \cite[Theorem 7.5]{P},
there exist CP maps $\Psi_i^{(j)}: B(E) \to B(H)$ 
such that $\Psi_i^{(j)}\big|_\cS= \psi_i^{(j)}$.
Define $\Phi_j:= \sum_{i=1}^{r_j} \alpha_i^{(j)} \circ \Psi_i^{(j)}$.
It follows that $\Phi_j:B(E) \to B(H)$ is an EB map for all $j$.
Let $A\in \cS$ be arbitrary. Since $\phi_j \to \phi$, it follows that there exists an $j_0 \in \N$ 
such that  $\|\phi_j(A) - \phi(A)\| < \epsilon \|A\|$
for all $ j \geq j_0$. Thus for all $j\geq j_0$, one obtains 
\begin{align*}
\|\Phi_j(A)-\phi(A)\|= & \left \|\sum_{i=1}^{r_j} \left(\alpha_i^{(j)} \circ \Psi_i^{(j)}\right)(A)- \phi(A) \right\|\\
 = &  \left \|\sum_{i=1}^{r_j} \left(\alpha_i^{(j)} \circ \psi_i^{(j)} \right)(A)- \phi(A) \right \| \\
 = &  \|\phi_j(A)- \phi(A)\|\\
 < & \epsilon \|A\|.
\end{align*}
Defining $\Phi := \Phi_{j_0}$ completes the proof. 
\end{proof}

\begin{remark}
Recall that $E=\C^d$. Due to the convexity of the mapping cone $\cC$ 
and the finite dimensionality of $B(E)$, Caratheodory's theorem \cite[Theorem 16.1.8]{DD} 
implies that the map $\phi_j$ in the above proof 
can be written as $\phi_j = \sum_{i=1}^{r} \alpha_i^{(j)} \circ \psi_i^{(j)}$, 
where $r = (2d)^2 +1$ (is independent of $j$). 
\end{remark}



\begin{proof}[Proof of Theorem~\ref{thm:eb-extn}]
Let $j\in \N$ be arbitrary. By Lemma \ref{lem:approx-eb-extn}, there exists an EB map $\Phi_j: B(E) \to B(H)$ 
such that $\|\Phi_j(A)-\phi(A)\|<\frac{1}{j}\|A\|$ for all $A\in \cS$. Note that $\|\Phi_j\|=\|\Phi_j(I_E)\|$ and
\begin{align*}
\|\Phi_j(I_E)\|&\leq \|\Phi_j(I_E)-\phi(I_E)\|+\|\phi(I_E)\|\\
& < \frac{1}{j} +\|\phi(I_E)\|.
\end{align*}
Thus the sequence $\{\Phi_j\}_{j\in \N}$ is bounded. 
Consider the set 
\[
\mathcal K := \{\Gamma : B(E) \rightarrow B(H)\,:\, \Gamma \text{ is EB and } \|\Gamma\| \le 1 + \|\phi(I_E)\|\}.
\] 
Observe that $\mathcal K$ is compact and $\{\Phi_j\}_{j\in \N}$ is a sequence in $\mathcal K$. By the compactness of 
$\mathcal K$, there exists a subsequence $\{\Phi_{j_k}\}_{k \in \N}$ of $\{\Phi_j\}_{j\in \N}$  and 
 an EB map $\Phi \in \mathcal K$ such that $\Phi_{j_k} \rightarrow \Phi$ as 
$k \rightarrow \infty$.  The proof is complete by observing that $\Phi\big|_\cS=\phi$. 

\end{proof}

\begin{remark}
It is to be noted that Theorem \ref{thm:eb-extn} does not necessarily 
follow from the extension theorem \cite[Theorem 5.2.3]{St}, because the proof 
of \cite[Theorem 5.2.3]{St} works only if the underlying operator system is a 
real operator system (i.e., a real subspace consisting of self-adjoint 
elements and the identity) as opposed to an arbitrary operator system, like 
we are considering here, particularly in Theorem \ref{thm:eb-extn}.
This gap in the proof of \cite[Theorem 5.2.3]{St} was pointed out in \cite{St18}.
 In fact, it was also remarked in \cite{St18} that the gap in the proof 
was due to the fact that Krein's Extension Theorem (\cite[Theorem A.3.1]{St}) 
which is formulated only for real spaces, was incorrectly applied to complex 
spaces. 
\end{remark}

\section{Dilations of UEB maps}
\label{S:Dila}
In this section, we prove Theorem \ref{thm:eb}. The 
proof relies on a crucial observation from \cite[Lemma 3.1]{RJP}. 
We begin with the following remark.

\begin{remark}
\label{rem:pos-to-eb}
Let $\Psi:B(E) \rightarrow B(H)$ be a (not necessarily unital) positive map with commutative range. 
The argument  in the proof of \cite[Lemma 3.1]{RJP} can easily be adapted 
 to conclude that $\Psi$ is EB. 
\end{remark}

\begin{lemma}
\label{lem:pos-to-eb}
Let $K$ be a finite dimensional Hilbert space and $\cS \subset B(E)$ be an operator system. 
If $\Gamma:\cS \rightarrow B(K)$ is a positive map with commutative range, 
then $\Gamma$ is EB.
\end{lemma}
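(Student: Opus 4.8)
The plan is to put $\Gamma$ into Holevo form and then invoke the operator-system characterization of EB maps in Corollary~\ref{cor:ebmaps-on-S}. First I would exploit the hypothesis that the range of $\Gamma$ is commutative. Since $\cS$ is self-adjoint and a positive map satisfies $\Gamma(X^*)=\Gamma(X)^*$, the map $\Gamma$ carries self-adjoint elements of $\cS$ to self-adjoint operators in $B(K)$, so the range of $\Gamma$ is a commutative, self-adjoint subspace of $B(K)$. Because $K$ is finite dimensional, the commutative $C^*$-algebra $\cA \subset B(K)$ that this range generates is $*$-isomorphic to $\C^m$ and is spanned by its minimal projections $Q_1,\dots,Q_m$, which are mutually orthogonal. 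Consequently every value $\Gamma(X)$ lies in $\cA$ and expands uniquely as
\[
\Gamma(X) = \sum_{j=1}^m \lambda_j(X)\, Q_j ,
\]
where each $\lambda_j:\cS \to \C$ is linear (uniqueness of the coefficients follows from the linear independence of the $Q_j$).

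Next I would verify that each $\lambda_j$ is a positive linear functional and hence a nonnegative multiple of a state. Fixing a unit vector $\eta_j \in \range(Q_j)$ and using $Q_iQ_j=\delta_{ij}Q_j$, one reads off $\lambda_j(X)=\langle \Gamma(X)\eta_j,\eta_j\rangle$, which is nonnegative whenever $X\succeq 0$ since $\Gamma$ is positive. A standard fact about operator systems in finite dimensions then gives $\lambda_j = c_j\,\phi_j$ with $c_j := \lambda_j(I_E)\ge 0$: when $c_j=0$ the functional $\lambda_j$ vanishes identically (from $-\lambda_j(I_E)\le \lambda_j(X)\le \lambda_j(I_E)$ for self-adjoint $X$ with $\|X\|\le 1$) and its term can be dropped, while when $c_j>0$ the functional $\phi_j := \lambda_j/c_j$ is unital and positive, i.e.\ a state on $\cS$. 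Setting $R_j := c_j Q_j \succeq 0$ yields
\[
\Gamma(X) = \sum_{j}\phi_j(X)\, R_j ,
\]
which is precisely a Holevo form for $\Gamma$, with states $\phi_j$ on $\cS$ and positive operators $R_j \in B(K)$.

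Finally, having exhibited $\Gamma$ in Holevo form, the implication $(ii)\Rightarrow(i)$ of Corollary~\ref{cor:ebmaps-on-S} (read with the finite dimensional target $K$ in place of $H$) immediately gives that $\Gamma$ is EB, completing the argument. I do not anticipate a serious obstacle: the only steps requiring care are the simultaneous diagonalization of the commutative range and the elementary structure of positive functionals on an operator system, both routine in finite dimensions. As an alternative that stays closer to Remark~\ref{rem:pos-to-eb}, one could instead extend each $\lambda_j$ to a positive functional $\tilde\lambda_j$ on $B(E)$, form the positive map $\tilde\Gamma := \sum_j \tilde\lambda_j(\cdot)\,Q_j$ extending $\Gamma$ and still having commutative range, conclude that $\tilde\Gamma$ is EB by Remark~\ref{rem:pos-to-eb}, and then note that the restriction of an EB map to the sub-operator-system $\cS$ is again EB.
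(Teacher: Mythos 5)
Your argument is correct, but it takes a genuinely different route from the paper's. The paper's proof is a two-line reduction: it invokes \cite[Corollary 2]{HJRW} to produce a commutative $C^*$-algebra $\cB \subset B(K)$ containing $\range(\Gamma)$ and a CP extension $\Theta : B(E) \to \cB$ of $\Gamma$, applies Remark~\ref{rem:pos-to-eb} (the adaptation of \cite[Lemma 3.1]{RJP}) to conclude $\Theta$ is EB, and then restricts back to $\cS$. You instead work entirely on $\cS$: you simultaneously diagonalize the commutative, self-adjoint range via the minimal projections $Q_1,\dots,Q_m$ of the finite-dimensional commutative $C^*$-algebra it generates, check that the coefficient functionals $\lambda_j(\cdot)=\langle \Gamma(\cdot)\eta_j,\eta_j\rangle$ are positive, normalize them to states, and read off a Holevo form, so that the elementary direction $(ii)\Rightarrow(i)$ of Corollary~\ref{cor:ebmaps-on-S} finishes the job (this direction is direct and does not create any circularity with Theorem~\ref{thm:eb-extn}, which is proved independently in Section~\ref{S:extn}). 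What your route buys is self-containedness: you avoid the external extension result of \cite{HJRW} and effectively in-line the content of Remark~\ref{rem:pos-to-eb}, at the cost of a longer write-up; it also makes explicit the structural fact that positive maps with commutative range on an operator system already admit a Holevo form, which the paper only records later (Lemma~\ref{lem:comm-range}) in the unital case. Your closing alternative --- extending each $\lambda_j$ to a positive functional on $B(E)$ and forming $\tilde\Gamma=\sum_j\tilde\lambda_j(\cdot)Q_j$ --- is essentially a hands-on reconstruction of the extension that the paper obtains wholesale from \cite{HJRW}, so it sits closest to the published proof.
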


\begin{proof}
By \cite[Corollary 2]{HJRW}, there exists a commutative $C^*$-algebra $\cB \subset B(K)$  
containing $\range(\Gamma)$ and a CP map $\Theta:B(E) \to \cB \subset B(K)$ such that 
$\Theta \big|_\cS=\Gamma$. It follows from Remark \ref{rem:pos-to-eb} that $\Theta$ is EB. 
Since $\Gamma$ is the restriction of the EB map $\Theta$ to $\cS$, it is also EB.
\end{proof}

\begin{lemma}
\label{lem:eb-map-factors}
Let $\cS \subset B(E)$ be an operator system and $\Phi:\cS \rightarrow B(H)$ be an EB map. 
There exists a commutative $C^*$-algebra $\cA$, a finite dimensional Hilbert space $K$, an 
isometry $V:H \rightarrow K$, a unital $*$-algebra homomorphism $\pi:\cA \rightarrow B(K)$ and a 
positive map $\eta:\cS \rightarrow \cA$ such that $\Phi(X) = V^*(\pi \circ \eta)(X)V$,  
for all $X \in \cS$.
\end{lemma}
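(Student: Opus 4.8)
The plan is to build the factorization explicitly from a Holevo decomposition of $\Phi$ and then to recognize the resulting data as a Naimark-type dilation through the commutative algebra $\ell^\infty_{m+1}$. By Corollary~\ref{cor:ebmaps-on-S}, since $\Phi$ is EB there are states $\phi_1,\dots,\phi_m$ on $\cS$ and positive operators $R_1,\dots,R_m\in B(H)$ with
\[
\Phi(X)=\sum_{j=1}^m\phi_j(X)R_j,\qquad X\in\cS .
\]
Setting $X=I_E$ and using $\phi_j(I_E)=1$ gives $\sum_{j=1}^m R_j=\Phi(I_E)=:P\in B(H)^+$. The commutative algebra will be $\cA:=\C^{m+1}$, regarded as the $C^*$-algebra $\ell^\infty_{m+1}$, and the positive map $\eta:\cS\to\cA$ will be assembled from the functionals $\phi_j$; the only real work is to arrange the accompanying operators so that the dilating map $V$ is an \emph{isometry}.

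The main obstacle is precisely this isometry requirement. In any realization $\Phi(X)=V^*\big(\sum_j c_j(X)\,Q_j\big)V$ in which $\pi$ is unital (so that $\sum_j Q_j=I_K$) and the $Q_j$ are mutually orthogonal projections, one is forced to have $V^*V=\sum_j V^*Q_jV$; that is, the operators sitting in front of the scalar functions must sum to $I_H$. For a non-unital $\Phi$ one has $P\neq I_H$, so the naive data coming directly from the Holevo form fails. I would repair this by rescaling and adjoining an auxiliary zero-weighted term: fix any $c>0$ with $c\ge\|P\|$, set $\mu_j:=c\,\phi_j$ (still positive functionals) and $R_j':=\tfrac1c R_j$ for $1\le j\le m$, and add $\mu_0:=0$ together with $R_0':=I_H-\tfrac1c P$. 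Since $c\ge\|P\|$ we have $R_0'\succeq 0$, and then $\Phi(X)=\sum_{j=0}^m\mu_j(X)R_j'$ for all $X\in\cS$ (the $j=0$ term vanishes) while now $\sum_{j=0}^m R_j'=I_H$.

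With this normalized data the construction is immediate. Let $K:=H^{\oplus(m+1)}$, a finite dimensional Hilbert space, and let $Q_j\in B(K)$ be the orthogonal projection onto the $j$-th summand, so that the $Q_j$ are mutually orthogonal with $\sum_{j=0}^m Q_j=I_K$. Define the unital $*$-homomorphism $\pi:\cA\to B(K)$ by $\pi(c_0,\dots,c_m)=\bigoplus_{j=0}^m c_j I_H=\sum_{j=0}^m c_jQ_j$, the map $\eta:\cS\to\cA$ by $\eta(X)=(\mu_0(X),\dots,\mu_m(X))$, and $V:H\to K$ by $Vx=\bigoplus_{j=0}^m (R_j')^{1/2}x$. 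Positivity of $\eta$ is clear since each $\mu_j$ is a positive functional, $\pi$ is plainly a unital $*$-homomorphism, and $\range(\pi)$ is commutative.

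Finally I would verify the two computations that close the argument. Because $\sum_{j=0}^m R_j'=I_H$, one gets $V^*V=\sum_{j=0}^m R_j'=I_H$, so $V$ is an isometry; and a direct computation gives $V^*Q_jV=R_j'$ for each $j$. Hence
\[
V^*(\pi\circ\eta)(X)V=\sum_{j=0}^m\mu_j(X)\,V^*Q_jV=\sum_{j=0}^m\mu_j(X)R_j'=\Phi(X)
\]
for all $X\in\cS$, which is the asserted factorization through the commutative $C^*$-algebra $\cA$. (Equivalently, this is the Naimark dilation of the sub-POVM $\{R_j'\}$ to the spectral measure $\{Q_j\}$, as anticipated in the remark following Theorem~\ref{thm:eb}.) I expect the isometry normalization in the second paragraph to be the only genuinely delicate point; the rest is routine verification.
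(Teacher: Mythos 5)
Your proof is correct and follows essentially the same route as the paper: write $\Phi$ in Holevo form via Corollary~\ref{cor:ebmaps-on-S}, pass to the commutative algebra $\ell^\infty_{m+1}$ with an extra coordinate absorbing the defect $I_H-\Phi(I_E)$, and dilate. The only cosmetic differences are that the paper normalizes by assuming $\|\Phi\|\le 1$ and invokes Stinespring's theorem for the unital positive (hence UCP) map $(x_1,\dots,x_{m+1})\mapsto\sum_j x_jR_j$ on $\ell^\infty_{m+1}$, whereas you rescale by $c\ge\|\Phi(I_E)\|$ and write out the Naimark dilation of the resulting POVM $\{R_j'\}$ explicitly.
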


\begin{proof}
 By Theorem \ref{thm:eb-extn}, there exists an EB map $\Psi:B(E) \to B(H)$ such that $\Psi\big|_{\cS}= \Phi$.  It follows from Theorem \ref{thm:ebmaps} and Corollary \ref{cor:ebmaps-on-S}, $\Phi$ can be written in Holevo form, i.e., $\Phi(X) = \sum_{j=1}^m  \phi_j(X) R_j$ for all $X \in \cS$, where the $\phi_j$'s are states on $\cS$ and the $R_j$'s are positive operators in $B(H)$.
Without loss of generality, assume that $\|\Phi\| \le 1$. 

\df{Case(i) - $\Phi$ is non-unital:} Let $\cA = \ell^\infty_{m+1}$. Define the linear maps $\gamma:\cA \rightarrow B(H)$ and $\eta:\cS \rightarrow \cA$ by $\gamma(x_1,\dots,x_{m+1}) = \sum_{j=1}^{m+1} x_jR_j$ and $\eta(X) = (\phi_1(X),\dots, \phi_m(X), 0)$, where $R_{m+1} = I_H - \sum_{j=1}^m R_j = I_H - \Phi(I_E) \in B(H)^+$. Note that $\gamma$ and $\eta$ are positive maps with $\gamma$ also being unital. In fact, by \cite[Theorem 3.11]{P}, $\gamma$ is a UCP map since the domain of 
$\gamma$ is a commutative C*-algebra.
 By Stinespring's dilation theorem  \cite[Theorem 4.1]{P}, there exists a finite dimensional Hilbert space $K$, an isometry $V:H \rightarrow K$ and a unital $*$-algebra homomorphism $\pi:\cA \rightarrow B(K)$ such that $V^* \pi(\cdot) V = \gamma(\cdot)$. 

\df{Case(ii) - $\Phi$ is unital:} Let $\cA = \ell^\infty_{m}$. Define the linear maps $\gamma:\cA \rightarrow B(H)$ and $\eta:\cS \rightarrow \cA$ by $\gamma(x_1,\dots,x_m) = \sum_{j=1}^m x_jR_j$ and $\eta(X) = (\phi_1(X),\dots,\phi_m(X))$. Note that both $\gamma$ and $\eta$ are unital positive maps. 
Arguing as above, one obtains a finite dimensional Hilbert space $K$, an isometry $V:H \rightarrow K$ and a unital $*$-algebra homomorphism $\pi:\cA \rightarrow B(K)$ such that $V^* \pi(\cdot) V = \gamma(\cdot)$. 

In both cases above,  observe that $\Phi = \gamma \circ \eta$. It follows that 
$
\Phi(X)= V^*(\pi \circ \eta)(X) V,
$
for all $X \in \cS$.
\end{proof}

\begin{proof}[Proof of Theorem~\ref{thm:eb}] To prove $(i) \implies (ii)$, observe that 
by Lemma \ref{lem:eb-map-factors}, there exists 
 a commutative $C^*$-algebra $\cA$, a finite dimensional Hilbert space $K$, an 
isometry $V:H \rightarrow K$, a unital $*$-algebra homomorphism $\pi:\cA \rightarrow B(K)$ and a 
positive map $\eta:\cS \rightarrow \cA$ such that $\Phi(X) = V^*(\pi \circ \eta)(X)V$ 
for all $X \in \cS$. Define $\Gamma := \pi \circ \eta$. Since $\pi$ is a $*$-algebra homomorphism 
and $\eta$ is a positive map with commutative range, it follows that $\Gamma$ is a 
positive map with commutative range that dilates $\Phi$. Finally, observe
 that if $\Phi$ is unital, then $\Gamma$ is too.  \\

The implication $(ii) \implies (iii)$ follows directly from Lemma \ref{lem:pos-to-eb}.\\

To prove $(iii) \implies (i)$, let $\Phi(X)= V^* \Gamma(X) V$ for all $X \in \cS$,  where $V: H \to K$ is an isometry for some finite dimensional Hilbert space $K$  and $\Gamma : \cS \to B(K)$ is an EB map with commutative range.
 It follows from Theorem \ref{thm:ebmaps} and Corollary \ref{cor:ebmaps-on-S} that $\Gamma(X) = \sum_{j=1}^{\ell} \psi_j(X)R_j$
 for some states $\psi_j$ on $\cS$ and positive operators $R_j$ in $B(K)$. Observe that for each $X \in \cS$, 
 \begin{align*}
 \Phi(X) =& V^* \left( \sum_{j=1}^{\ell} \psi_j(X)R_j \right)V
 = \sum_{j=1}^{\ell}  \psi_j(X) (V^*R_j V),
 \end{align*}  
which is again a map in the Holevo form and hence is EB. Finally, if $\Gamma$ is unital, 
then $\Phi$ is too. 
\end{proof}
%

\section{Maximal UEB Dilations}
\label{S:MaxUEB}
In this section we prove Theorems \ref{thm:max-iff-stdform} and \ref{thm:maxdil-of-UEB}. 
The proof of Theorem \ref{thm:max-iff-stdform} makes use of the following 
 Lemmas, which 
contain some key properties of maximal UEB maps defined on operator systems.

\begin{lemma}
\label{lem:comm-range}
Let $\cS \subset B(E)$ be an operator system and $\Phi:\cS \rightarrow B(H)$ be 
a UEB map. If $\Phi$ has commutative range, then 
\begin{equation}
\label{eq:form}
\Phi(X)=\sum_{i=1}^k\phi_i(X)P_i,
\end{equation}
where the $\phi_i$'s are distinct states on $\cS$ and the $P_i$'s 
are mutually orthogonal projections in $B(H)$ such that $\sum_{i=1}^k P_i = I_H$.
\end{lemma}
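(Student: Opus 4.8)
The plan is to exploit the finite dimensionality of $H = \C^n$ to diagonalize the commutative range of $\Phi$. First I would let $\cB \subset B(H)$ denote the unital commutative $C^*$-algebra generated by $\range(\Phi)$. This is legitimate: an EB map is in particular positive (take $k=1$ in the definition of EB), hence $*$-preserving, so $\range(\Phi)$ is a commuting family of operators closed under adjoints, and $I_H = \Phi(I_E) \in \range(\Phi)$ since $\Phi$ is unital and $I_E \in \cS$. As a commutative unital $C^*$-subalgebra of $M_n$, $\cB$ admits a (unique) family of minimal projections $P_1,\dots,P_k$ with $k \le n$ that are mutually orthogonal, satisfy $\sum_{i=1}^k P_i = I_H$, and span $\cB$; equivalently, $\cB$ is simultaneously diagonalizable.

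Second, since $\Phi(X) \in \cB$ for every $X \in \cS$ and each $P_i$ is an atom of $\cB$, I would define $\phi_i(X)$ to be the scalar determined by $\Phi(X)P_i = \phi_i(X)P_i$, so that $\Phi(X) = \sum_{i=1}^k \phi_i(X)P_i$. Linearity of each $\phi_i$ is immediate from linearity of $\Phi$ together with the linear independence of the $P_i$. To see that each $\phi_i$ is a state, I would treat positivity and unitality separately: if $X \succeq 0$ in $\cS$, then $\Phi(X) \succeq 0$, whence $\phi_i(X)P_i = P_i\Phi(X)P_i \succeq 0$, and therefore $\phi_i(X) \ge 0$ because $P_i \neq 0$; comparing $\sum_i \phi_i(I_E)P_i = \Phi(I_E) = I_H = \sum_i P_i$ against the linear independence of the projections forces $\phi_i(I_E) = 1$. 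Hence each $\phi_i$ is a state on $\cS$.

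Finally, the $\phi_i$ produced above need not be distinct, so I would merge duplicates: whenever $\phi_i = \phi_j$ for $i \neq j$, replace the pair $P_i, P_j$ by the single projection $P_i + P_j$ (again a projection, as $P_i, P_j$ are orthogonal) carrying the common functional. This operation preserves mutual orthogonality and the identity $\sum_i P_i = I_H$, and after finitely many steps yields an expression of the form \eqref{eq:form} in which the states are pairwise distinct.

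The argument is essentially routine given the finite-dimensional spectral structure, and I do not anticipate a substantial obstacle. The only point requiring genuine care is the merging step, where one must verify that collapsing the projections attached to coinciding functionals keeps the remaining projections mutually orthogonal and summing to $I_H$; this is bookkeeping rather than a real difficulty.
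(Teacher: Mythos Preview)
Your proposal is correct and follows essentially the same approach as the paper: both pass to the unital commutative $C^*$-algebra generated by $\range(\Phi)$ and exploit its finite-dimensional spectral structure, with the paper phrasing this via the Gelfand isomorphism $\cA \cong \ell_k^\infty$ and you phrasing it via the minimal projections of $\cB$; the merging of duplicate states is handled identically.
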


\begin{proof}
 Let $\cA= C^*(\range (\Phi))\subset B(H)$. There 
 exists a unital $*$-algebra isomorphism $\pi: \cA \to \ell_k^\infty$ for some $k$.
Consider $~\pi\circ \Phi: \cS \to \ell_k^\infty$. 
For each $X\in \cS$, $\pi\circ \Phi(X)=\sum_{i=1}^k\lambda_{i,X}e_i$,
where $\lambda_{i,X}$'s are scalars (depending on $X$) and $\{e_1, \dots, e_k\}$ is the standard basis of $\ell_k^\infty$.
For $1\leq i \leq k$, define $\phi_i: \cS \to \C$ via $\phi_i(X)=\lambda_{i,X}$.
Since $\pi \circ \Phi$ is a unital positive map, it follows that $\phi_i$'s are states on $\cS$.
 Indeed
\begin{equation*}
\label{eq:form}
\Phi(X)=\sum_{i=1}^k\phi_i(X)P_i,
\end{equation*}
where $P_i=\pi^{-1}(\{e_i\})$.
Note that the $P_i$'s are mutually orthogonal projections in $B(H)$ such that $\sum_{i=1}^kP_i=I_H$. 
Without loss of generality, one can assume that the $\phi_i$'s are distinct states on $\cS$ because if 
two $\phi_i$'s are the same, we can sum the corresponding projections,
 i.e., the $P_i$'s together, and obtain a single projection. 
\end{proof}

\begin{lemma} 
\label{lem:maximal-imp-commrang}
Let $\cS \subset B(E)$ be an operator system. 
If $\Phi: \cS \to B(H)$ is a maximal UEB map, then $\range(\Phi)$ is 
commutative.
\end{lemma}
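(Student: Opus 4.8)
The plan is to deduce commutativity of $\range(\Phi)$ from the maximality hypothesis by exhibiting an explicit UEB dilation with commutative range and observing that it must collapse. First I would apply the implication $(i)\Rightarrow(ii)$ of Theorem \ref{thm:eb} to the UEB map $\Phi$, which produces a finite dimensional Hilbert space $K$, an isometry $V:H\to K$, and a \emph{unital} positive map $\Gamma:\cS\to B(K)$ with commutative range such that $\Phi(X)=V^*\Gamma(X)V$ for all $X\in\cS$. By Lemma \ref{lem:pos-to-eb}, a positive map with commutative range is automatically EB, so $\Gamma$ is a unital EB (that is, UEB) map.

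The key observation is that $\Gamma$ is exactly a UEB dilation of $\Phi$ in the sense defined just before Remark \ref{rem:alt-defn}: it is a UEB map, $K$ is finite dimensional and hence separable, and $\Phi=V^*\Gamma(\cdot)V$. Since $\Phi$ is maximal by hypothesis, this dilation must be trivial. Invoking Remark \ref{rem:alt-defn}(i), triviality is equivalent to $VH$ being a reducing subspace for $\Gamma(X)$ for every $X\in\cS$; equivalently, the projection $P:=VV^*$ onto $VH$ commutes with $\Gamma(X)$ for all $X\in\cS$.

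It then remains to transfer commutativity from $\range(\Gamma)$ to $\range(\Phi)$, which is the only computation and is routine. Since $\range(\Gamma)$ is commutative, the operators $\{\Gamma(X):X\in\cS\}$ pairwise commute, and since each commutes with $P$ while $V^*P=V^*V V^*=V^*$, one gets for all $X,Y\in\cS$
\[
\Phi(X)\Phi(Y)=V^*\Gamma(X)VV^*\Gamma(Y)V=V^*\Gamma(X)\Gamma(Y)V=V^*\Gamma(Y)\Gamma(X)V=\Phi(Y)\Phi(X),
\]
so $\range(\Phi)$ is commutative. I do not anticipate a genuine obstacle: once the dilation of Theorem \ref{thm:eb} is in hand the argument is a bookkeeping of definitions, and the single point meriting care is confirming that $\Gamma$ truly qualifies as a UEB dilation—its unitality comes from the unital clause of Theorem \ref{thm:eb} and its EB property from Lemma \ref{lem:pos-to-eb}—so that maximality can legitimately be applied.
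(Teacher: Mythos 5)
Your proposal is correct and follows essentially the same route as the paper: both obtain from Theorem \ref{thm:eb} a UEB dilation $\Gamma$ of $\Phi$ with commutative range (the paper cites part (iii) directly, whose proof likewise passes through Lemma \ref{lem:pos-to-eb}), invoke maximality to make $VH$ invariant (equivalently reducing) for $\Gamma(X)$, and then run the identical computation $\Phi(X)\Phi(Y)=V^*\Gamma(X)VV^*\Gamma(Y)V=V^*\Gamma(X)\Gamma(Y)V=\Phi(Y)\Phi(X)$. No gaps; your extra care in verifying that $\Gamma$ is genuinely a \emph{unital} EB dilation is exactly the point the paper's argument also relies on.
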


\begin{proof}
Since $\Phi$ is a UEB map,
it follows from part (iii) of 
Theorem \ref{thm:eb} that there exists a finite dimensional Hilbert space $K$, a
 UEB map 
$\Gamma:\cS  \to B(K)$ with commutative range and an isometry $V:H \rightarrow K$ 
such that $V^*\Gamma(X) V=\Phi(X)$,
for all $X \in \cS$. Since $\Phi$ is a maximal UEB map, $\Gamma$ is
a trivial UEB dilation of $\Phi$. Thus it follows from Remark \ref{rem:alt-defn} that 
the subspace $VH$ is an invariant subspace 
for $\Gamma(X)$ for all $X\in \cS$. 
Since $VV^*$ is the projection of $K$ onto $\range(V)$, it follows that 
\begin{align*}
\Phi(X)\Phi(Y)(a) &= V^*\Gamma(X)VV^*\Gamma(Y)V(a)\\
&= V^*\Gamma(X)\Gamma(Y)V(a)\\
&= V^*\Gamma(Y)\Gamma(X)V(a)\\
&= V^*\Gamma(Y)VV^*\Gamma(X)V(a)\\
&= \Phi(Y)\Phi(X)(a),\quad \text{ for all $a \in H$.} \qedhere
\end{align*} 
\end{proof}

\begin{lemma}
 \label{lem:max} Let $\cS \subset B(E)$ be an
 operator system and let $\Phi : \cS \to B(H)$ be a UEB map defined by $\Phi(X)=\sum_{i=1}^k \phi_i(X)P_i$,
 where $\phi_i$ are states defined on $\cS$ and $P_i \in B(H)$ are mutually orthogonal 
projections satisfying $\sum_{i=1}^k P_i = I_H$. If $\Phi$ is a maximal UEB map, 
then each $\phi_i$ is a linear 
extremal state.
\end{lemma}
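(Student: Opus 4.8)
The plan is to argue by contradiction using the maximality of $\Phi$. Suppose some $\phi_{i_0}$ fails to be a linear extremal state; since $\phi_{i_0}$ lies in the convex state space of $\cS$, there exist distinct states $\psi_1,\psi_2$ on $\cS$ and a scalar $t\in(0,1)$ with $\phi_{i_0}=t\psi_1+(1-t)\psi_2$. I may assume every $P_i\neq 0$ (a zero term can simply be dropped from the Holevo sum), so in particular $H_0:=\range(P_{i_0})\neq\{0\}$. The idea is to manufacture a \emph{nontrivial} UEB dilation of $\Phi$ by ``splitting'' the single Holevo term $\phi_{i_0}(X)P_{i_0}$ into two terms governed by $\psi_1$ and $\psi_2$, thereby contradicting maximality.

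Concretely, I would write $H=H_0\oplus H_1$, where $H_1=\range(I_H-P_{i_0})=\bigoplus_{i\neq i_0}\range(P_i)$, and set $K=H_0\oplus H_0\oplus H_1$. Let $V:H\to K$ be the map $V(h_0\oplus h_1)=\sqrt{t}\,h_0\oplus\sqrt{1-t}\,h_0\oplus h_1$; a one-line computation gives $V^*V=I_H$, so $V$ is an isometry. Let $Q_1,Q_2\in B(K)$ denote the orthogonal projections onto the first and second copies of $H_0$, and for $i\neq i_0$ let $\hat P_i\in B(K)$ denote the projection onto $\range(P_i)$ sitting inside the $H_1$ summand. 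Define $\Psi:\cS\to B(K)$ by
\[
\Psi(X)=\psi_1(X)\,Q_1+\psi_2(X)\,Q_2+\sum_{i\neq i_0}\phi_i(X)\,\hat P_i.
\]
Since $Q_1,Q_2$ and the $\hat P_i$ are mutually orthogonal projections summing to $I_K$, the map $\Psi$ is unital; and being in Holevo form it is EB by Corollary~\ref{cor:ebmaps-on-S}, so $\Psi\in\text{UEB}(\cS,B(K))$.

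The next step is to verify that $V^*\Psi(X)V=\Phi(X)$ for all $X\in\cS$. This is a routine block computation: applying $V^*$ absorbs the factors $\sqrt{t}$ and $\sqrt{1-t}$ so that the first two summands recombine as $\bigl(t\psi_1(X)+(1-t)\psi_2(X)\bigr)P_{i_0}=\phi_{i_0}(X)P_{i_0}$, while the $H_1$ summand reproduces $\sum_{i\neq i_0}\phi_i(X)P_i$. Hence $\Psi$ is a UEB dilation of $\Phi$ in the sense of equation~\eqref{eq:eb-eq}. It then remains to check that this dilation is nontrivial. By Remark~\ref{rem:alt-defn}(i), triviality is equivalent to $VH$ being invariant under $\Psi(X)$ for every $X$; tracking a vector $V(h_0\oplus h_1)$ through $\Psi(X)$, invariance would force $\psi_1(X)h_0=\psi_2(X)h_0$ for all $h_0\in H_0$ and all $X\in\cS$. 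Since $H_0\neq\{0\}$, this would give $\psi_1=\psi_2$, contradicting our choice. Thus $\Psi$ is a nontrivial UEB dilation of $\Phi$, contradicting maximality, and we conclude that every $\phi_i$ is a linear extremal state.

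I expect the only real subtlety to be the bookkeeping in setting up $K$, $V$, and $\Psi$ and confirming the compression identity $V^*\Psi V=\Phi$; once the splitting is correctly arranged, both the unitality and the EB property of $\Psi$ follow immediately, and the nontriviality criterion hinges entirely on $\psi_1\neq\psi_2$ together with $P_{i_0}\neq 0$.
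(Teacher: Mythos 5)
Your proposal is correct and follows essentially the same route as the paper's proof: the same splitting of the space $\range(P_{i_0})$ into two copies, the same isometry $V(h_0\oplus h_1)=(\sqrt{t}h_0,\sqrt{1-t}h_0,h_1)$, and the same Holevo-form dilation $\Psi$, with the only cosmetic difference being that you phrase the argument as a contradiction while the paper argues directly that maximality forces $\sigma=\tau$. The compression identity and the nontriviality criterion you invoke are exactly the computations carried out in the paper.
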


\begin{proof}

Fix $i \in \{1,2,\dots,k\}$. Suppose that $\sigma, \tau$ are states on $\cS$ such that 
\[
\phi_i= t \sigma + (1-t) \tau,
\]
for some $t \in(0,1)$. It suffices to show that $\sigma = \tau$.
Let 
$F= \range (P_i) \subset H$ and $G= F^\perp \subset H$. 
Let $L:= F \oplus F \oplus G$.
Define $V:H \rightarrow L$ by 
\begin{equation*}
V(x+y)= (\sqrt{t}x, \sqrt{(1-t)} x, y),
\end{equation*}
for all $x\in F$ and $y\in G$. It is easily seen that $V$ is an isometry. Let 
$\Psi : \cS \to B(L)$ be defined by 
\begin{equation*}
\Psi(X)(x, y, z)= \bigg(\sigma(X)x, \tau(X)y, \sum_{j\neq i} \phi_j(X)P_j(z)\bigg),
\end{equation*}
for all $X\in \cS, x,y \in F$ and $z \in G$. Observe that $\Psi$ is unital.
Define the coordinate projections $Q_1, Q_2, Q_3$ on $L$ by
 $Q_1(x,y,z)=(x,0,0), Q_2(x,y,z)=(0,y,0)$ and $Q_3(x,y,z)=(0,0,z)$.
One sees that $\Psi(X)= \sigma(X)Q_1+\tau(X)Q_2+ \left(\sum_{j\neq i}\phi_j(X)P_j \right)Q_3$, 
which is in Holevo form for $\Psi$. 
By Corollary \ref{cor:ebmaps-on-S}, it follows that $\Psi$ is a UEB map. 
Observe that 
\begin{align*}
\langle V^*(u, v, w), (x+y)\rangle =& \langle (u, v, w), V(x+y)\rangle\\
=& \langle (u, v, w), (\sqrt{t}x, \sqrt{(1-t)} x, y)\rangle\\
=& \sqrt{t} \langle u,x\rangle+ \sqrt{(1-t)} \langle v, x\rangle + \langle w, y\rangle\\
=& \langle \sqrt{t} u + \sqrt{(1-t)} v, x\rangle+ \langle w, y\rangle\\
=& \langle \sqrt{t} u + \sqrt{(1-t)} v + w, x+y \rangle,
\end{align*}
for all $u,v,x \in F$ and $w,y \in G$. Thus $V^*$ is defined by 
\[
V^*(u, v, w) := \sqrt{t} u + \sqrt{(1-t)} v + w.
\]
For each $X\in \cS$, $x \in F$ and $y\in G$, it follows that 
\begin{align*}
V^* \Psi(X)V(x+y) &= V^*\Psi(X)(\sqrt{t}x, \sqrt{(1-t)} x, y)\\
& = V^*\bigg( \sqrt{t} \sigma(X)x, \sqrt{(1-t)} \tau(X)x, \sum_{j\neq i} \phi_j(X)P_j(y)\bigg)\\
& = t \sigma(X)x+ (1-t) \tau(X) x+ \sum_{j\neq i} \phi_j(X)P_j(y)\\
& = \phi_i(X) x+ \sum_{j\neq i} \phi_j(X)P_j(y)\\
& = \sum_{j=1}^k \phi_j(X)P_j(x+y)\\
& = \Phi(X)(x+y).
\end{align*}
Hence $\Psi$ is a UEB dilation of $\Phi$. 
By hypothesis, 
$\Psi(X)VH \subset VH$ for all $X\in \cS$.
 Choosing $0 \neq x \in F$, it follows that 
\begin{align*}
\big(\sqrt{t}\sigma(X)x, \sqrt{(1-t)} \tau(X) x,0\big) &= \Psi(X)V(x) \\
&= V(z + w) = (\sqrt{t}z, \sqrt{(1-t)}z, w)
\end{align*}
for some $z\in F$ and $w\in G$. 
Hence $\sigma(X)x=\tau(X)x$ for 
each $X \in \cS$. This in turn implies that
 $\sigma = \tau$ and the 
proof is complete. \qedhere
\end{proof}

\begin{lemma}
\label{lem:fin-max} 
Let $\cS \subset B(E)$ be an operator system 
and let $\Phi: \cS \to B(H)$ be a UEB map. If every UEB dilation $\Psi: \cS \to B(L)$ of $\Phi$ is trivial  whenever $L$ is finite dimensional,
 then $\Phi$ is a maximal UEB map. 
\begin{proof}
Let $\Psi: \cS \to B(K)$ be a UEB dilation of $\Phi$ where
 $K$ is an infinite dimensional separable Hilbert space. Let the isometry 
$V:H \rightarrow K$ be such that $V^*\Psi(X)V=\Phi(X)$ for 
all $X\in \cS$.  Let $\{e_1, \dots, e_n\}$ denote 
an orthonormal basis for $VH$. Extend it to an orthonormal
 basis $\{e_i \,:\, i \in \N\}$ for the Hilbert space $K$. 
 For each $m \ge n$, let $K_m:=\operatorname{span}\{e_1, \dots, e_m\}$, 
$W_m:K_m \rightarrow K$  denote the inclusion map and 
$P_m$  denote the orthogonal projection of $K$ onto $K_m$. 
Observe that $P_m = W_m^*$. Define $V_m = P_m \circ V$ and 
 $\Psi_m : \cS \to B(K_m)$ to be the compression 
of $\Psi$ to $K_m$, i.e., 
$\Psi_m(X)=  P_m \Psi(X)\big|_{K_m} = W_m^* \Psi(X) W_m$. 
Let $k \in \N$ and $Z \in (M_k \otimes \cS)^+$. Since $\Psi$ is an EB map, 
$(\tti_k \otimes \Psi)(Z) \in \overline{M_k^+ \otimes B(K)^+}$. It follows that 
$
(\tti_k \otimes \Psi_m)(Z) = (\tti_k \otimes W_m)^*( (\tti_k \otimes \Psi)(Z)) (\tti_k \otimes W_m) \in \overline{M_k^+ \otimes B(K_m)^+} = M_k^+ \otimes B(K_m)^+.
$
Thus $\Psi_m : \cS \to B(K_m)$ is a UEB map. Moreover, $V_m$ is an isometry and 
$V_m^*\Psi_m(X)V_m = \Phi(X)$ for all $X \in \cS$, i.e., $\Psi_m$ is a UEB dilation of $\Phi$.
Let $X\in \cS$. 
Since $K_m$ is finite dimensional, by hypothesis,
 it follows that $\Psi_m$ is a trivial UEB dilation of $\Phi$. By Remark \ref{rem:alt-defn}, 
one gets that $\Psi_m(X)V_mH \subset V_m H = VH$. Finally, for $x \in H$, 
\begin{align*}
\Psi(X)(Vx)& = \lim_{m \rightarrow \infty} P_m \Psi(X) P_m (Vx) = \lim_{m \rightarrow \infty} W_m^* \Psi(X) W_m P_m (Vx) \\
& =  \lim_{m \rightarrow \infty} \Psi_m(X)V_m(x) \in  VH.  \qedhere
\end{align*}
\end{proof}
\end{lemma}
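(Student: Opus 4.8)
The plan is to reduce the general (possibly infinite-dimensional) dilation problem to the finite-dimensional case that is assumed, by exhausting the dilation space $K$ with finite-dimensional compressions. Let $\Psi : \cS \to B(K)$ be an arbitrary UEB dilation of $\Phi$, with $K$ an infinite-dimensional separable Hilbert space and isometry $V : H \to K$ satisfying $V^*\Psi(X)V = \Phi(X)$ for all $X \in \cS$. Identifying $H$ with $VH$, I must show that $VH$ is invariant (equivalently reducing) for every $\Psi(X)$; by Remark \ref{rem:alt-defn} this is precisely triviality of $\Psi$, which is what maximality of $\Phi$ requires.

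First I would fix an orthonormal basis $\{e_1,\dots,e_n\}$ of $VH$ and extend it to an orthonormal basis $\{e_i : i \in \N\}$ of $K$. For each $m \ge n$ set $K_m = \spann\{e_1,\dots,e_m\}$, let $W_m : K_m \to K$ be the inclusion (so $W_m^* = P_m$, the orthogonal projection of $K$ onto $K_m$), and define the compression $\Psi_m := W_m^*\Psi(\cdot)W_m$ together with $V_m := P_m V$. The first key step is that each $\Psi_m$ is again UEB: for $Z \in (M_k \otimes \cS)^+$ one has $(\tti_k \otimes \Psi_m)(Z) = (\tti_k \otimes W_m)^*\bigl((\tti_k \otimes \Psi)(Z)\bigr)(\tti_k \otimes W_m)$, and since conjugating an element of the separable cone by $\tti_k \otimes W_m$ sends each simple tensor $A \otimes B$ to $A \otimes (W_m^* B W_m)$, the result lands back in $M_k^+ \otimes B(K_m)^+$ (already norm-closed because $K_m$ is finite-dimensional). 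Since $VH \subseteq K_m$ for $m \ge n$, the projection $P_m$ fixes $VH$ pointwise, so $V_m$ is an isometry and $W_m V_m = V$; consequently $V_m^*\Psi_m(X)V_m = V^*\Psi(X)V = \Phi(X)$. Thus each $\Psi_m$ is a finite-dimensional UEB dilation of $\Phi$.

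Now the hypothesis applies to each $\Psi_m$, forcing it to be a trivial dilation, so by Remark \ref{rem:alt-defn} the subspace $V_m H = VH$ is invariant for $\Psi_m(X)$, i.e. $\Psi_m(X)V_m x \in VH$ for all $x \in H$ and $X \in \cS$. To conclude, I would let $m \to \infty$. Because $\{e_i\}$ is an orthonormal basis, $P_m \to I_K$ in the strong operator topology, and for $m \ge n$ one has $P_m V x = V x$, whence $\Psi_m(X)V_m x = W_m^*\Psi(X)W_m V_m x = P_m\Psi(X)Vx$. Taking limits gives $\Psi(X)Vx = \lim_{m\to\infty} P_m\Psi(X)Vx$, which is the limit of a sequence lying in the closed finite-dimensional subspace $VH$, so $\Psi(X)Vx \in VH$. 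Hence $VH$ is invariant for every $\Psi(X)$, $\Psi$ is trivial, and $\Phi$ is maximal.

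The step I expect to require the most care is the verification that each finite compression $\Psi_m$ genuinely remains entanglement breaking: one must invoke the explicit description of the separable cone and the fact that conjugation by $\tti_k \otimes W_m$ preserves it, rather than relying on mere preservation of positivity (which would only yield that $\Psi_m$ is positive, not EB). By contrast, the limiting argument is routine once one has strong convergence $P_m \to I_K$ and uses that $VH$ is closed by finite-dimensionality.
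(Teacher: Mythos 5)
Your proposal is correct and follows essentially the same route as the paper's own proof: compressing $\Psi$ to the exhausting finite-dimensional subspaces $K_m$, checking that each compression $\Psi_m = W_m^*\Psi(\cdot)W_m$ is a UEB dilation of $\Phi$ via conjugation of the separable cone by $\tti_k \otimes W_m$, invoking the hypothesis to get triviality of each $\Psi_m$, and passing to the limit using $P_m \to I_K$ strongly together with the closedness of $VH$. The only difference is cosmetic: you spell out why conjugation preserves separability and why $W_mV_m = V$, details the paper leaves implicit.
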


\begin{remark}
It is to be noted that the definition of an EB map adopted here in terms of separability in the norm-closure sense, is not the usual way it is defined in the literature. If one works with a weak*-continuous CP map (or equivalently, a normal CP map), say $\Phi:B(L) \rightarrow B(M)$, then there is a way of defining an EB map that is along the lines of the usual definition namely: $\Phi:B(L) \rightarrow B(M)$ is EB if for every $k \in \N$,  $\Psi:\fT(M) \rightarrow \fT(L)$ maps positive matrices in $M_k \otimes \fT(M)$ to separable matrices in $M_k \otimes \fT(L)$, where $\fT(L)$ denotes the space of trace-class operators on the Hilbert space $L$, and $\Psi$ is the unique map whose (Banach space) adjoint $\Psi^*$ equals $\Phi$. This certainly is a generalization of the usual definition of an EB map from the finite to the infinite dimensional setting. But since here we mainly work on unital maps defined only on operator systems, defining the  "entanglement breaking" property using this type of duality becomes a challenge. 
\end{remark}

We would like to emphasize that the above remark is only relevant 
when one of the Hilbert spaces $L$ or $M$ is infinite dimensional and so 
does not impact the results here.

\begin{proof}[Proof of Theorem~\ref{thm:max-iff-stdform}]

 (i)$ \implies$ (ii): Since $\Phi$ is a maximal UEB map, it follows from 
Lemma \ref{lem:maximal-imp-commrang} that $\Phi$ has commutative range. 
By applying Lemma \ref{lem:comm-range}, one gets that 
$\Phi(X)=\sum_{i=1}^k \phi_i(X)P_i$,
 where the $\phi_i$'s are distinct states defined on $\cS$ and $P_i \in B(H)$ 
are mutually orthogonal projections satisfying $\sum_{i=1}^k P_i = I_H$. That
 the $\phi_i$'s are linear extremal states on $\cS$ follows from 
an application of Lemma \ref{lem:max}.
 
To prove the implication (ii) $\implies$ (i),  
let $\Psi$ be an arbitrary UEB dilation of $\Phi;$  that is, $\Psi:\cS \to B(K)$ is a UEB map such that there exists an isometry $V:H \to K$ satisfying
$\Phi(X)= V^*\Psi(X)V,$ for all $X\in \cS$. 
By Lemma \ref{lem:fin-max}, one can assume that $K$ is finite dimensional. 
It suffices to show that $\Psi(X)Vx \in VH$ for all $x\in H$ and $X\in \cS$. If $x=0$, then there is nothing to prove. Suppose that $x\neq0$.  We first consider the case $x\in \operatorname{Range}(P_j)$ for some $j$. For such an $x$, it follows that $\Phi(X)x= \phi_j(X)x$ and hence $V^* \Psi(X)Vx=\phi_j(X)x$.

Let a Holevo form (see Theorem \ref{thm:ebmaps} 
and Corollary \ref{cor:ebmaps-on-S})
for $\Psi$ be given by $\Psi(X)= \sum_{i=1}^\ell \psi_i(X)R_i$, where the $\psi_i$'s are distinct states and the $R_i$'s are positive matrices with $\sum_{i=1}^\ell R_i=I_K$. 
Observe that 
 \begin{align}
\label{eq:simplify}
\notag
\phi_j(X) \|x\|^2 =  \langle\phi_j(X)x,x\rangle & = \langle \Phi(X)x, x \rangle  = \left\langle \left (\sum_{i=1}^\ell V^*  \psi_i(X)R_i V \right)x,x \right \rangle\\
 =& \sum_{i=1}^\ell \psi_i(X) \left \langle R_iVx,Vx \right \rangle.
 \end{align}
Thus, $\phi_j(X)= \sum_{i=1}^\ell \alpha_i \psi_i(X)$, where $\alpha_i = \frac{1}{\|x\|^2} \langle R_iVx,Vx \rangle \ge 0$. 
Note that $\alpha_i$'s are independent of $X$. Also, since $V$ is an isometry and $\sum_{i=1}^\ell R_i=I_K$,
 it follows that 
\begin{equation}
\label{eq:convexcomb}
\sum_{i=1}^\ell \alpha_i = \frac{1}{\|x\|^2} \sum_{i=1}^\ell \langle R_iVx,Vx \rangle = \frac{1}{\|x\|^2} \left \langle \left( \sum_{i=1}^\ell R_i \right) Vx, Vx \right \rangle = 1.
\end{equation}
Combining equations \eqref{eq:simplify} and \eqref{eq:convexcomb}, it follows that 
 $\phi_j$ is a convex combination of the $\psi_i$'s.
 Since the $\psi_i$'s are distinct and $\phi_j$ is a linear extremal state, it follows that there exists an index $i_0$ such that $\alpha_{i_0} = 1$, $\phi_j= \psi_{i_0}$ and $\alpha_i = \frac {1}{\|x\|^2} \langle R_iVx,Vx\rangle=0$ for all $i\neq i_0$. Since the $R_i$'s are positive, it follows that $R_iVx=0$ for all $i\neq i_0$, implying $Vx = \sum_{i=1}^\ell R_iVx = R_{i_0}Vx$. 
 Thus
 \begin{align*}
 \Psi(X)Vx=& \psi_{i_0}(X)R_{i_0}Vx= \psi_{i_0}(X)Vx \in VH.
 \end{align*}
Now using the fact  that the $P_i$'s are mutually orthogonal projections satisfying $\sum_{i=1}^k P_i=I_H$ and $\oplus_{i=1}^k \range(P_i)=H$, it follows that $\Psi(X)Vx\in V(H)$ for all $x\in H$, and the proof is complete. \qedhere
\end{proof}

Our next task is to prove Theorem \ref{thm:maxdil-of-UEB}, before which 
we prove the following crucial observation concerning a Holevo form of a UEB map. 

\begin{lemma}
\label{lem:extremal}
Let $\cS \subset B(E)$ be an operator system. If $\Phi:\cS \rightarrow B(H)$ is a UEB map, 
then $\Phi(X) = \sum_{j=1}^k \phi_j(X)S_j$ for each $X \in \cS$, where $\phi_j$ are distinct 
linear extremal states on $\cS$ and $S_j$ are positive operators in $B(H)$ satisfying 
$\sum_{j=1}^k S_j= I_H$.
\end{lemma}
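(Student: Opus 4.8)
The plan is to start from the Holevo form guaranteed by Corollary \ref{cor:ebmaps-on-S} and then refine the states occurring in it to extremal ones. Since $\Phi$ is a UEB map, Corollary \ref{cor:ebmaps-on-S} supplies a representation $\Phi(X) = \sum_{j=1}^m \psi_j(X) R_j$ for all $X \in \cS$, where the $\psi_j$ are states on $\cS$ and the $R_j$ are positive operators in $B(H)$. The states $\psi_j$ need not be extremal, so the essential task is to replace them by linear extremal states while preserving positivity of the coefficients and the unitality constraint.

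First I would record that the state space of $\cS$ is compact and convex. Since $\cS \subset B(E)$ is finite dimensional, its dual is finite dimensional and the set of states is a closed, bounded, convex subset, hence compact. By the Krein--Milman theorem together with Caratheodory's theorem (in finite dimensions), every state on $\cS$ is a \emph{finite} convex combination of linear extremal states. Applying this to each $\psi_j$, I write $\psi_j = \sum_{l} t_{j,l}\, \omega_{j,l}$, where each $\omega_{j,l}$ is a linear extremal state on $\cS$ and $t_{j,l} \in (0,1]$ with $\sum_l t_{j,l} = 1$.

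Substituting into the Holevo form yields
\begin{equation*}
\Phi(X) = \sum_{j} \Big( \sum_l t_{j,l}\, \omega_{j,l}(X) \Big) R_j = \sum_{j,l} \omega_{j,l}(X)\, (t_{j,l} R_j),
\end{equation*}
in which each $t_{j,l} R_j$ is again a positive operator. After collecting together the terms whose extremal states coincide (adding the corresponding positive operators), one obtains distinct linear extremal states $\phi_1,\dots,\phi_k$ and positive operators $S_1,\dots,S_k \in B(H)$ with $\Phi(X) = \sum_{j=1}^k \phi_j(X) S_j$. It then remains only to check the normalization: the regrouping does not alter the total sum of coefficients, so $\sum_{j=1}^k S_j = \sum_{j,l} t_{j,l} R_j = \sum_j \big( \sum_l t_{j,l} \big) R_j = \sum_j R_j$. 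Since $I_E \in \cS$ and states are unital, evaluating the original form at $I_E$ gives $\Phi(I_E) = \sum_j \psi_j(I_E) R_j = \sum_j R_j$, while $\Phi(I_E) = I_H$ by unitality of $\Phi$; hence $\sum_{j=1}^k S_j = I_H$.

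The step I expect to be the main obstacle is the passage from an abstract convex decomposition of each $\psi_j$ into extremal states to a genuinely \emph{finite} one with extremal summands. This is exactly where the finite dimensionality of $\cS$ is indispensable: it guarantees compactness of the state space so that extreme points exist and Krein--Milman applies, and it lets Caratheodory's theorem bound the number of extremal summands, keeping the resulting Holevo form finite.
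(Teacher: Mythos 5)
Your proposal is correct and follows essentially the same route as the paper's own proof: start from the Holevo form, use Krein--Milman together with Caratheodory in the finite-dimensional state space to expand each state as a finite convex combination of linear extremal states, absorb the convex weights into the positive coefficients, and regroup coinciding extremal states to obtain distinct ones while preserving $\sum_j S_j = I_H$. No gaps.
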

\begin{proof}
Let a Holevo form for $\Phi$ be given by 
\[
\Phi(X)= \sum_{i=1}^r \phi_i(X)R_i,
\]
where the $\phi_i$'s are states on $\cS$ and the $R_i$'s are
 positive operators in $B(H)$ such that $\sum_{i=1}^r R_i= I_H$. Observe that 
the set of all states on $\cS$ is a compact and convex subset of the dual of $\cS$. 
A well known consequence of Caratheodory's theorem (See \cite[Theorem 16.1.8]{DD} and 
\cite[Corollary 16.1.9]{DD}) and the Krein-Milman theorem is that  in a finite dimensional 
 topological vector space, a compact convex set $\cC$ equals the convex hull 
of its extreme points. Using this fact here, each $\phi_i$ can be written as a convex 
combination of linear extremal states, i.e.,
 \[
 \phi_i(X)= t_{i,1}\varphi_{i,1}(X)+ \dots + t_{i,\ell_i}\varphi_{i,\ell_i}(X),
 \]
where $t_{i,j}\in [0,1]$ with $\sum_{j=1}^{\ell_i} t_{i,j}=1$ and the $\varphi_{i,j}$'s 
are linear extremal states on $\cS$.
It follows that 
\begin{align*}
\Phi(X) &= \sum_{i=1}^r \sum_{j=1}^{\ell_i} t_{i,j}\varphi_{i,j}(X)R_i =  \sum_{i=1}^r \sum_{j=1}^{\ell_i} \varphi_{i,j}(X)(t_{i,j}R_i),
\end{align*}
for all $X \in \cS$.
Note that by combining suitable terms in the above sum, it
 can be rewritten in the desired form. This completes the proof.
\end{proof}

\begin{proof}[Proof of Theorem~\ref{thm:maxdil-of-UEB}]
Using Theorem \ref{thm:eb-extn} and the Holevo form (see Theorem \ref{thm:ebmaps}  
and Corollary \ref{cor:ebmaps-on-S}) for EB maps, we have 
\begin{equation}
\label{eq:hol-form}
\Phi(X)= \sum_{i=1}^r \phi_i(X)R_i,
\end{equation}
for all $X \in \cS$, where the $\phi_i$'s are distinct states on $\cS$ and the $R_i$'s are
 positive operators in $B(H)$ such that $\sum_{i=1}^r R_i= I_H$.
By Lemma \ref{lem:extremal}, there is no loss of generality in assuming that the
 $\phi_i$'s in equation \eqref{eq:hol-form} are distinct linear extremal states on $\cS$. 
By Lemma \ref{lem:eb-map-factors}, there exists a finite dimensional Hilbert space $K$, an 
isometry $V:H \rightarrow K$, $r \in \N$, a unital $*$-algebra homomorphism 
$\pi:\ell^{\infty}_r \rightarrow B(K)$ and a positive map $\eta:\cS \rightarrow \ell^{\infty}_r$ 
such that 
\[
\Phi(X) = V^*(\pi \circ \eta)(X)V,
\]
for all $X \in \cS$. As observed in the proof of Lemma \ref{lem:eb-map-factors}, 
the unitality of $\Phi$ implies the unitality of $\eta$. 
Note that $\pi(x_1, \dots, x_r)= \sum_{i=1}^r x_i\pi(e_i)$, where $\{e_1, \dots, e_r\}$ is 
the standard basis of $\ell_r^\infty$. Also 
$(\pi \circ \eta)(X)= \pi\big(\sum_{i=1}^r \phi_i(X)e_i\big)=  \sum_{i=1}^r \phi_i(X)\pi(e_i)$, 
for all $X \in \cS$.
Since $\pi$ is a unital $*$-algebra 
homomorphism, it follows that the $\pi(e_i)$'s are mutually orthogonal projections and
 $\sum_{i=1}^r \pi(e_i)= I_K$. Define $\Psi:= \pi \circ \eta$. It follows from 
Theorem \ref{thm:max-iff-stdform} that $\Psi$ is maximal. This completes the proof.  \qedhere
\end{proof}

\section{$C^*$-extreme UEB maps on Operator Systems}
\label{S:C*-ext}
In this section we prove Theorem \ref{thm:max-iff-C*ext} and Corollary \ref{cor:extn}. 
The proofs use techniques from \cite{FM93}, \cite{FM97}, \cite{FZ} and \cite{Z}. We also 
need the following Lemmas, the first of which contains an equivalent definition of a $C^*$-extreme UEB map and 
the second one contains a description of UEB$(\cS, B(H))$ in terms of $C^*$-convex combinations. 
\begin{lemma}
\label{lem:alt-defn}
Let $\cS \subset B(E)$ be an operator system and $\Phi:\cS \rightarrow B(H)$ be a UEB map. The following statements are equivalent.
\begin{itemize}
\item[(i)] $\Phi$ is a $C^*$-extreme point of UEB$(\cS, B(H))$.
\item[(ii)] If $\Phi = \sum_{i=1}^2 T_i^* \Phi_i T_i$, for some invertible operators $T_1, T_2 \in B(H)$ satisfying 
$T_1^*T_1 + T_2^*T_2 = I_H$, then there exist unitaries $U_1, U_2 \in B(H)$ such that 
$\Phi_i(X)=U_i^*\Phi(X) U_i$, $i= 1, 2$.
\end{itemize}
\end{lemma}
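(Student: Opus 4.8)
The plan is to establish the nontrivial implication $(ii)\implies(i)$ by induction on the number $k$ of terms in a proper $C^*$-convex representation of $\Phi$, the base case $k=2$ being precisely hypothesis $(ii)$. The reverse implication $(i)\implies(ii)$ is immediate: a two-term proper $C^*$-convex combination is a special instance of the combinations appearing in the definition of a $C^*$-extreme point, so every such representation is automatically trivial, which is exactly what $(ii)$ asserts.

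For the inductive step, suppose $\Phi(X)=\sum_{i=1}^k T_i^*\Phi_i(X)T_i$ for all $X\in\cS$, with each $T_i\in B(H)$ invertible, $\sum_{i=1}^k T_i^*T_i=I_H$, and each $\Phi_i$ a UEB map. The idea is to peel off the first term and absorb the rest into a single term, reducing to a two-term combination. Using the polar decomposition of the invertible operator $T_1$, write $T_1=U_1B_1$ with $U_1$ unitary and $B_1:=(T_1^*T_1)^{1/2}$ invertible, so that $T_1^*\Phi_1(X)T_1=B_1\Phi_1'(X)B_1$, where $\Phi_1'(X):=U_1^*\Phi_1(X)U_1$ is UEB (being unitarily equivalent to $\Phi_1$). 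Set $B_2:=\big(I_H-T_1^*T_1\big)^{1/2}$; since $\sum_{i=2}^k T_i^*T_i=I_H-T_1^*T_1\succeq T_2^*T_2\succ 0$ by the invertibility of $T_2$, the operator $B_2$ is invertible as well. Defining
\[
\Phi_2'(X):=B_2^{-1}\Big(\sum_{i=2}^k T_i^*\Phi_i(X)T_i\Big)B_2^{-1},
\]
we obtain the two-term decomposition $\Phi=B_1\Phi_1'B_1+B_2\Phi_2'B_2$ with $B_1,B_2$ positive, invertible, and $B_1^2+B_2^2=I_H$.

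The key point is that $\Phi_2'$ is again a UEB map. Unitality is the computation $\Phi_2'(I_E)=B_2^{-1}\big(\sum_{i=2}^k T_i^*T_i\big)B_2^{-1}=I_H$, and the entanglement-breaking property follows from Corollary \ref{cor:ebmaps-on-S}: writing each $\Phi_i$ in a Holevo form $\Phi_i(X)=\sum_j\psi_{i,j}(X)R_{i,j}$ with states $\psi_{i,j}$ on $\cS$ and positive $R_{i,j}$, the map $\Phi_2'$ takes the form $\Phi_2'(X)=\sum_{i,j}\psi_{i,j}(X)\,\big(B_2^{-1}T_i^*R_{i,j}T_iB_2^{-1}\big)$, which is again a Holevo form since each coefficient $B_2^{-1}T_i^*R_{i,j}T_iB_2^{-1}=(T_iB_2^{-1})^*R_{i,j}(T_iB_2^{-1})$ is positive. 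Hence $\Phi=B_1^*\Phi_1'B_1+B_2^*\Phi_2'B_2$ is a genuine two-term proper $C^*$-convex combination, and hypothesis $(ii)$ supplies unitaries $W_1,W_2$ with $\Phi_1'=W_1^*\Phi W_1$ and $\Phi_2'=W_2^*\Phi W_2$. The first relation gives $\Phi_1=U_1\Phi_1'U_1^*=(W_1U_1^*)^*\Phi(W_1U_1^*)$, so $\Phi_1$ is unitarily equivalent to $\Phi$.

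It remains to handle $\Phi_2,\dots,\Phi_k$. Rearranging $\Phi_2'=W_2^*\Phi W_2$ yields $\Phi(X)=\sum_{i=2}^k S_i^*\Phi_i(X)S_i$ with $S_i:=T_iB_2^{-1}W_2^*$; each $S_i$ is invertible and $\sum_{i=2}^k S_i^*S_i=W_2B_2^{-1}\big(\sum_{i=2}^k T_i^*T_i\big)B_2^{-1}W_2^*=I_H$, so this is a $(k-1)$-term proper $C^*$-convex representation of $\Phi$ to which the inductive hypothesis applies, giving that $\Phi_i$ is unitarily equivalent to $\Phi$ for $2\le i\le k$. I expect the main obstacle to be precisely the verification that $\Phi_2'$ is UEB — that conjugating and summing Holevo forms over the operator system $\cS$ preserves the entanglement-breaking property, which is exactly where Corollary \ref{cor:ebmaps-on-S} is needed — together with the invertibility of $B_2$ that makes the resulting two-term combination genuinely proper.
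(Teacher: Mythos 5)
Your proof is correct. The paper itself does not write out an argument for this lemma --- it only remarks that the proof of \cite[Proposition 3.2]{BDMS} carries over verbatim to the operator-system setting --- and the standard proof of that proposition is exactly your induction: absorb the tail $\sum_{i\ge 2}T_i^*\Phi_i T_i$ into a single term with positive invertible coefficient $B_2=(I_H-T_1^*T_1)^{1/2}$, invoke the two-term hypothesis, and then unwind $\Phi_2'=W_2^*\Phi W_2$ into a $(k-1)$-term proper representation. The one point that genuinely needs checking in the operator-system setting --- that $\Phi_2'$ is again UEB --- you handle correctly via the Holevo form and Corollary~\ref{cor:ebmaps-on-S}, so your write-up supplies precisely the detail the paper leaves to the reference.
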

Lemma~\ref{lem:alt-defn} is essentially the operator system version of \cite[Proposition 3.2]{BDMS}. The proof given 
there works equally well for the operator system setting too.
\begin{lemma}
\label{lem:ueb}
Let $\cS \subset B(E)$ be an operator system, $\mathcal K$ denote the set 
\begin{align*}
\{\Psi:\cS \rightarrow B(H)& : \Psi(X) = g(X)I_H, \, g  \text{ is a linear extremal state on } \cS \},
\end{align*}
$\cE$ denote the $C^*$-convex hull of $\mathcal K$ 
 and
$\mathcal E_+$ denote the set of all positive $C^*$-convex combinations 
of elements of $\cK$. The following statements hold. 
\begin{enumerate}[(i)]
\item $\cE_+ = \cE = $ UEB$(\cS, B(H))$.
\item \label{eq:pos} If $\Gamma \in \mathcal E$ is given by $\Gamma(X) = 
\sum_{i=1}^m T_i^*(g_i(X)I_H)T_i \in \mathcal E$, 
then $\Gamma \in \mathcal E_+$ and there exists $B_i \in B(H)^+$ such that $\sum_{i=1}^m B_i^2 = I_H$ and 
\[
\Gamma(X) = \sum_{i=1}^m B_i(g_i(X)I_H)B_i,
\] 
for all $X \in \cS$.
\end{enumerate}
\end{lemma}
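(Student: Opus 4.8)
The plan is to establish part (i) through the circular chain of inclusions $\cE_+ \subseteq \cE \subseteq \text{UEB}(\cS, B(H)) \subseteq \cE_+$, after which the three sets must coincide. The first inclusion is immediate, since every positive $C^*$-convex combination is in particular a $C^*$-convex combination. The feature I would exploit throughout is that each element of $\cK$ has the form $X \mapsto g(X)I_H$ with $g(X)$ a scalar, so that in any combination $\sum_i T_i^*(g_i(X)I_H)T_i$ the coefficients commute through and the combination collapses to $\sum_i g_i(X)\,T_i^*T_i$. Part (ii) is then a short manipulation with positive square roots, which I would record separately since it is precisely the device driving the last inclusion.

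For $\cE \subseteq \text{UEB}(\cS, B(H))$, I would take an arbitrary $\Gamma \in \cE$, say $\Gamma(X) = \sum_{i=1}^m T_i^*(g_i(X)I_H)T_i$ with the $g_i$ linear extremal states and $\sum_{i=1}^m T_i^*T_i = I_H$, and rewrite it as $\Gamma(X) = \sum_{i=1}^m g_i(X)\,T_i^*T_i$. This is visibly a Holevo form, the $g_i$ being states on $\cS$ and the $T_i^*T_i$ positive operators in $B(H)$; hence $\Gamma$ is EB by Corollary \ref{cor:ebmaps-on-S}, and it is unital because $\Gamma(I_E) = \sum_i g_i(I_E)\,T_i^*T_i = \sum_i T_i^*T_i = I_H$. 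This keeps the argument self-contained and avoids invoking the $C^*$-convexity of $\text{UEB}(\cS, B(H))$ as a black box.

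For $\text{UEB}(\cS, B(H)) \subseteq \cE_+$, I would apply Lemma \ref{lem:extremal} to write a given UEB map $\Phi$ as $\Phi(X) = \sum_{j=1}^k \phi_j(X)S_j$ with the $\phi_j$ distinct linear extremal states, each $S_j \succeq 0$, and $\sum_{j=1}^k S_j = I_H$. Setting $B_j := S_j^{1/2} \in B(H)^+$ then gives $\sum_j B_j^2 = I_H$ together with $\Phi(X) = \sum_j B_j(\phi_j(X)I_H)B_j$, exhibiting $\Phi$ as a positive $C^*$-convex combination of the elements $X \mapsto \phi_j(X)I_H$ of $\cK$. This closes the chain and proves (i).

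For part (ii), given $\Gamma(X) = \sum_{i=1}^m T_i^*(g_i(X)I_H)T_i$ with $\sum_i T_i^*T_i = I_H$ (as required of a $C^*$-convex combination), I would set $B_i := (T_i^*T_i)^{1/2} \in B(H)^+$. Then $\sum_i B_i^2 = \sum_i T_i^*T_i = I_H$, and since $g_i(X)$ is a scalar, $B_i(g_i(X)I_H)B_i = g_i(X)B_i^2 = g_i(X)T_i^*T_i = T_i^*(g_i(X)I_H)T_i$, so $\Gamma(X) = \sum_i B_i(g_i(X)I_H)B_i$ and $\Gamma \in \cE_+$. I do not anticipate a genuine obstacle in this proof: the real work is packaged into the extremal decomposition of Lemma \ref{lem:extremal}, and the only point to keep in mind is that replacing $T_i$ by $(T_i^*T_i)^{1/2}$ is lossless precisely because the elements of $\cK$ are scalar multiples of $I_H$.
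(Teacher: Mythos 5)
Your proposal is correct and follows essentially the same route as the paper: the only substantive inclusion is $\text{UEB}(\cS,B(H)) \subseteq \cE_+$ via Lemma \ref{lem:extremal} and the square roots $\sqrt{S_j}$, and part (ii) rests on the observation that scalars pass through the coefficients, so $T_i^*(g_i(X)I_H)T_i = g_i(X)T_i^*T_i = B_i(g_i(X)I_H)B_i$ with $B_i = (T_i^*T_i)^{1/2}$. The paper phrases part (ii) through the polar decomposition $T_i = U_iB_i$ and dismisses $\cE \subseteq \text{UEB}(\cS,B(H))$ as evident, whereas you spell that inclusion out via the Holevo form; these are presentational differences only.
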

\begin{proof}
Evidently, $\cE_+ \subset \cE \subset $ UEB$(\cS, B(H))$. To complete the proof of part $(i)$, 
 it suffices to show that  UEB$(\cS, B(H))$ $\subset \cE_+$. To this end, let $\Phi \in$ UEB$(\cS, B(H)).$ 
By Lemma \ref{lem:extremal}, it follows that for each $X \in \cS$, $\Phi(X) = \sum_{j=1}^k \phi_j(X)S_j$,  
where $\phi_j$ are distinct linear extremal states on $\cS$ and $S_j$ are 
positive operators in $B(H)$ satisfying $\sum_{j=1}^k S_j= I_H$. 
Rewrite
\[
\Phi(X) = \sum_{j=1}^k \phi_j(X)S_j = \sum_{j=1}^k \sqrt{S_j} (\phi_j(X)I_H) \sqrt{S_j}. 
\]
Thus $\Phi \in \mathcal E_+$. To prove part (ii), write $T_i$ in its polar decomposition, i.e., $T_i = U_iB_i$, where $U_i$ is a unitary operator and 
$B_i = \sqrt{T_i^*T_i} \in B(H)^+$. Note that $\sum_{i=1}^m B_i^2 = \sum_{i=1}^m T_i^*T_i = I_H$. 
By part $(i)$, it follows that $\Gamma \in \mathcal E_+$. Also, for each $X \in \cS$, 
\begin{align*}
\Gamma(X) &= \sum_{i=1}^m T_i^*(g_i(X)I_H)T_i = \sum_{i=1}^m B_iU_i^*(g_i(X)I_H)U_iB_i \\
& = \sum_{i=1}^m B_i(g_i(X)I_H)B_i. \qedhere
\end{align*}
\end{proof}

\begin{proof}[Proof of Theorem~\ref{thm:max-iff-C*ext}]
$(i) \Rightarrow (ii):$ Using the alternate definition of a $C^*$-extreme UEB map given in
 Lemma \ref{lem:alt-defn}, let $\Phi(X)= T_1^*\Phi_1(X)T_1+T_2^*\Phi_2(X)T_2$ for all 
$X\in \cS$, where $\Phi_1, \Phi_2:\cS \to B(H)$
 are UEB maps and $T_1, T_2\in B(H)$ are invertible operators such that 
$\sum_{i=1}^2T_i^*T_i=I_H$. It suffices to show that $\Phi_1$ and $\Phi_2$ are 
unitarily equivalent to $\Phi$. 

Define an isometry $V: H \to H \oplus H$ via $V(x)= (T_1(x), T_2(x))$ and a linear
 map $\Psi:\cS \to B(H\oplus H)$ by
\[
\Psi(X)= \begin{bmatrix}
\Phi_1(X) & 0\\
0 & \Phi_2(X)
\end{bmatrix}, 
\] 
for all $X\in \cS$. Observe that $\Psi$ being a direct sum of UEB maps, is a UEB map. 
In fact, it is a UEB dilation of $\Phi$ since $V^*\Psi(X)V=\Phi(X)$ for all $X\in \cS$.
By hypothesis, it follows that $\Psi$ is a trivial UEB dilation of
 $\Phi$, that is, $\Psi(X)V= V\Phi(X)$ for all $X\in \cS$. Equivalently,
\[
\begin{bmatrix}
\Phi_1(X) & 0\\
0 & \Phi_2(X)
\end{bmatrix} \begin{bmatrix}
T_1\\
T_2
\end{bmatrix} = \begin{bmatrix}
T_1\\
T_2
\end{bmatrix} \Phi(X)
\]
for all $X \in \cS$. Consequently, we have
\[
\Phi_1(X) = T_1 \Phi(X)T_1^{-1} \, \text{ and }\,  \Phi_2(X) = T_2 \Phi(X)T_2^{-1}
\]
for all $X\in\cS$.

Using the polar decomposition of the invertible operator $T_i^*$, $i=1,2$, there exists a unitary
 operator $W_i\in B(H)$ and a
 positive invertible operator $P_i\in B(H)$ given by $\sqrt{T_iT_i^*}$  such that 
$T_i=P_iW_i$, $i=1,2$. 
 Hence
\begin{equation}
\label{eq:def-of-phi}
\Phi(X)= W_i^*P_i^{-1}\Phi_i(X)P_iW_i,
\end{equation}
for $i = 1, 2.$ Since $\Psi$ is a trivial UEB dilation of $\Phi$, it follows that 
\begin{equation}
\label{eq:tri-dil}
\Psi(X)VV^*= VV^*\Psi(X) ,
\end{equation}
for all $X\in \cS$, (See Remark \ref{rem:alt-defn}).
Applying the definition of $\Psi$ and $VV^*$ in Equation \eqref{eq:tri-dil}, one obtains 
\begin{align*}
\Phi_i(X)T_iT_i^* =& T_iT_i^*\Phi_i(X),
\end{align*}
for $i = 1,2$. Note that $T_iT_i^*$ is a positive operator and $\Phi_i(X)$ commutes with $T_iT_i^*$. 
Hence
\begin{equation}
\label{eq:comm-equality}
\Phi_i(X)P_i= P_i\Phi_i(X),
\end{equation}
for each $X \in \cS$ and $i = 1,2$.
 
It follows from equations \eqref{eq:def-of-phi} and \eqref{eq:comm-equality} that 
 \begin{align*}
 \Phi(X)= W_i^* P_i^{-1}P_i \Phi_i(X)W_i = W_i^*\Phi_i(X)W_i,
 \end{align*}
for each $X \in \cS$ and $i = 1,2$. Since $W_1$ and $W_2$ are unitaries, the proof 
is complete.

The following proof of the implication $(ii) \Rightarrow (i)$ is adapted from \cite[Theorem 4.1]{FM93}. 
Recall the notations $\mathcal K$, $\mathcal E$ and $\mathcal E_+$ from Lemma \ref{lem:ueb}.
Let $\Phi:\cS \rightarrow B(H)$ be a $C^*$-extreme UEB map. 
Using Lemma \ref{lem:ueb}, write $\Phi(X)=\sum_{i=1}^m T_i^* \Psi_i(X)T_i$, where $\Psi_i(X)=g_i(X)I_H$ 
for all $X \in \cS$, the $g_i$'s are linear extremal states on $\cS$, and $\sum_{i=1}^m T_i^*T_i=I_H$. 
By part \eqref{eq:pos} of Lemma \ref{lem:ueb}, there is no loss of generality in assuming that the $T_i$'s are positive. 
Let $m$ be the least number of coefficients required to represent $\Phi$ as a positive $C^*$-convex 
combination, i.e., as a $C^*$-convex combination 
of elements of $\mathcal K$ with positive coefficients (or equivalently to represent $\Phi$ as an element 
of $\mathcal E_+$). If $m=1$, then there is nothing to prove, due to 
Theorem \ref{thm:max-iff-stdform}. So assume $m\geq 2$. Note that there must exist an $i$ such that $\| T_i\|=1$. 
Otherwise $\|T_i\| < 1$ for all $i$ and, by modifying Technique-A in \cite{FM93} to
 our current setting, one can rewrite $\Phi$ as a proper $C^*$-convex 
combination of some $\Gamma_j \in \mathcal E$ with the additional property that
each $\Gamma_j$ is a $C^*$-convex combination of less than $m$ of the $\Psi_i$'s. 
Since $\Phi$ is $C^*$- extreme, each $\Gamma_j$ is unitarily
 equivalent to $\Phi$. An application of part \eqref{eq:pos} of Lemma \ref{lem:ueb} 
 implies that $\Phi$ is a positive $C^*$-convex combination of fewer 
than $m$ of the $\Psi_i$'s, which contradicts the minimality of $m$. 
Thus at least one of the $T_i$'s has unit norm. Without loss of generality, assume that $\|T_1\|=1$. 
Due to the unitality of $\Phi$, note also that $\sum_{i=1}^m T_i^2=I_H$.
 Since $T_1\geq 0$ and $\|T_1\|=1$, 
it follows that there exists a unitary $U\in B(H)$ such that 
\[
U^*T_1U = \begin{pmatrix}
I &  0\\ 
0 &   Y_1
\end{pmatrix}
\]
where $Y_1$ is a diagonal matrix satisfying 
$Y_1\geq 0$, $\|Y_1\|<1$ and $I$ is 
the identity operator of suitable size.
Since $T_i\geq 0$ for all $i\geq 2$ and $\sum_{i=1}^m  T_i^2=I_H$, 
\[
U^*T_iU= \begin{pmatrix}
0 & \\ 
& Y_i
\end{pmatrix},  \text{ with } Y_i \ge 0 \text{ for all } i \geq 2.
\]
Let $W_i = U^*T_iU$ for each $i \in \{1,2,\dots,m\}$. 
Observe that $W_i \ge 0$ and 
\begin{equation}
\label{eq:firststep}
U^*\Phi(X)U =\sum_{i =1}^m W_i(U^*\Psi_i(X)U)W_i = \sum_{i =1}^m W_i\Psi_i(X)W_i.
\end{equation}
Since $\sum_{i=1}^m W_i^2 = I_H$, it follows that 
 $\sum_{i=1}^m Y_i^2 = I$. Since $\|Y_1\|<1$, 
$I-Y_1^2$ invertible. 
Also since 
 \[
\sum _{i\geq 2}\begin{pmatrix}
0 & 0\\ 
0 & Y_i^2
\end{pmatrix} = \sum_{i\geq 2} W_i^2=I_H - W_1^2= \begin{pmatrix}
0 & 0 \\ 
0 & I -Y_1^2
\end{pmatrix},
\]
it follows that $\sum_{i\geq 2}Y_i^2$ is invertible. 
Adapting Technique-C from \cite{FM93} to our setting and applying it here 
 allows us to write $\sum_{i\geq 2} W_i\Psi_i(X)W_i$ as a single term 
$T_0^*\Gamma_0(X)T_0$ for some $\Gamma_0\in \cE$. Note that $T_0$ need not be positive. 
By using the polar decomposition $T_0= U_0W_0$ where $U_0$ is unitary and
 $W_0$ is a positive operator, observe that 
$
T_0^*\Gamma_0(X)T_0=W_0\Psi_0(X)W_0, 
$
for some $\Psi_0 \in \cE$.
Thus 
\begin{equation}
\label{eq:secstep}
\sum_{i\geq 2} W_i\Psi_i(X)W_i= W_0\Psi_0(X)W_0
\end{equation} 
for some positive operator $W_0$ and $\Psi_0\in \cE$.

Indeed $W_0^2+W_1^2= I_H$ and $W_0= \begin{pmatrix}
0 & 0\\ 
0 & Y_0
\end{pmatrix}
$ for some positive operator $Y_0$. 
Recall the positive matrix $Y_1$ and rewrite it as 
 $Y_1 = \begin{pmatrix}
Z_1 & 0\\ 
0 & 0
\end{pmatrix}$ where $Z_1$ is a positive invertible matrix with 
$\|Z_1\|<1.$ 
Since $Y_0^2 + Y_1^2 = I$, 
 $Y_0= \begin{pmatrix}
 Z_0 & 0 \\ 
0 & I
\end{pmatrix}$ 
for some matrix $Z_0$.
Thus 
\begin{equation*}
W_1=\begin{pmatrix}
I & 0 & 0\\ 
0 & Z_1 & 0\\ 
0&0 & 0
\end{pmatrix} \quad \text{and} \quad W_0=\begin{pmatrix}
0 & 0 & 0\\  
0& Z_0 & 0\\ 
0&0 & I
\end{pmatrix}, 
\end{equation*}
where $Z_1, Z_0$ are positive matrices. Since $W_0^2 + W_1^2 = I_H$, 
it must be the case that $Z_0^2 + Z_1^2 = I$, which in turn implies that 
$Z_0$ is invertible, since $\|Z_1\|<1$.
Recall that $Z_1$ is also invertible. It follows from equations \eqref{eq:firststep} and 
\eqref{eq:secstep} and the fact that $\Psi_1 \in \cK$ that 
\begin{align}
\notag
&U^*\Phi(X)U =  W_0 \Psi_0(X) W_0 + W_1 \Psi_1(X) W_1  \\
\label{eq:matrix-one}
&= \begin{pmatrix}
I & 0 & 0  \\ 
0 & Z_0 & 0\\ 
0 & 0 & I\\ 
\end{pmatrix} \Theta(X) \begin{pmatrix}
I & 0 & 0  \\ 
0 & Z_0 & 0\\ 
0 & 0 & I\\ 
\end{pmatrix} + 
\begin{pmatrix}
0 & 0 & 0\\ 
0 & Z_1 & 0 \\ 
0 & 0 & 0
\end{pmatrix} \Psi_1(X) \begin{pmatrix}
0 & 0 & 0 \\ 
0 & Z_1 & 0\\ 
0 & 0 & 0
\end{pmatrix},
\end{align}
where 
\begin{align}
\label{eq:matrix-two}
\Theta(X) &= \begin{pmatrix}
I & 0 & 0 \\ 
0 & 0 & 0 \\ 
0 & 0 & 0
\end{pmatrix} \Psi_1(X) \begin{pmatrix}
I & 0 & 0\\ 
0 & 0 & 0 \\ 
0 & 0 & 0
\end{pmatrix}+ \begin{pmatrix}
0 & 0 & 0 \\
0 & I & 0\\
0 & 0 & I
\end{pmatrix} \Psi_0(X) \begin{pmatrix}
0 & 0 & 0\\
0 & I & 0 \\
0 & 0 & I
\end{pmatrix} \\
\notag
& \in \mathcal E.
\end{align}
By the invertibility of $Z_0$, it is immediate that 
$Y_0= \begin{pmatrix}
 Z_0 & 0 \\ 
0 & I
\end{pmatrix}$ is also invertible. Applying Technique-B from \cite{FM93} allows for writing 
$U^*\Phi U$ as a proper $C^*$-convex combination of $\Theta$  and some other 
$\Gamma \in \mathcal E$. By hypothesis, $\Phi$ is a $C^*$-extreme UEB map. Hence   
so is $U^*\Phi U$. It follows that $\Phi$ is unitarily equivalent to $\Theta$. Let $Q_1$ and 
$Q_1^\perp$ denote the projections $\begin{pmatrix}
I & 0 & 0 \\ 
0 & 0 & 0 \\ 
0 & 0 & 0
\end{pmatrix}$ and $\begin{pmatrix}
0 & 0 & 0 \\
0 & I & 0\\
0 & 0 & I
\end{pmatrix}$ respectively.
Indeed, with respect to the decomposition $H = \range(Q_1) \oplus \range(Q_1^\perp)$,  
\begin{equation}
\label{eq:dirsum}
\Theta(X) = Q_1 \Psi_1(X)|_{\range(Q_1)} \oplus Q_1^\perp \Psi_0(X)|_{\range(Q_1^\perp)},
\end{equation}
for all $X \in \cS$.
Since  $Q_1^\perp \Psi_0(X)|_{\range(Q_1^\perp)} \in$ UEB($\cS, B(\range(Q_1^\perp)))$, one 
can write 
\[
Q_1^\perp \Psi_0(X)|_{\range(Q_1^\perp)} := \sum_{i=1}^k S_i^* \Gamma_i(X)S_i,
\]
where $\Gamma_i \in$ UEB($\cS, B(\range(Q_1^\perp)))$ are given by 
$\Gamma_i(X) = \gamma_i(X) I_{\range(Q_1^\perp)}$, $\gamma_i$ are 
linear extremal states defined on $\cS$, $S_i \in B(\range(Q_1^\perp))$ satisfy
 $\sum_{i=1}^k S_i^*S_i = I_{\range(Q_1^\perp)}.$
Since $\Theta \in$ UEB($\cS, B(H))$ is a $C^*$-extreme map, it follows from 
 an argument similar to the one on page 770 in \cite{FM93}, that each direct summand 
of $\Theta$ in equation \eqref{eq:dirsum} is also a $C^*$-extreme UEB map. In particular, 
the UEB map $Q_1^\perp \Psi_0(X)|_{\range(Q_1^\perp)}$ is a $C^*$-extreme point of  
UEB($\cS, B(\range(Q_1^\perp)))$. As before, there is no loss of generality in
 assuming that $\|S_1\|=1$. We can now repeat the arguments from before to the 
current set up and conclude (by taking appropriate direct sums with the 
zero operator) that there exists a projection $Q_2$ in $B(H)$ such that 
\[
\Theta(X) = Q_1 \Psi_1(X) Q_1 + Q_2 \Delta_1(X) Q_2 + \text{an EB map}, 
\]
where $Q_1$ and $Q_2$ are mutually orthogonal projections in $B(H)$,
 $\Delta_1(X) = \gamma_1(X)I_H$ and 
$\gamma_1$ is a linear extremal state on $\cS$.
 This process has to end after a finite number of steps, 
due to our finite dimensionality assumptions. This along with the fact that $\Theta$ is 
unitarily equivalent to $\Phi$ implies that $\Phi(X) = \sum_{i=1}^\ell \phi_i(X)P_i$, for 
all $X \in \cS$, where the $\phi_i$'s are linear extremal states on $\cS$ and the $P_i$'s
 are mutually orthogonal projections in $B(H)$ satisfying $\sum_{i=1}^\ell P_i = I_H$. 
An application of 
Theorem \ref{thm:max-iff-stdform} implies that $\Phi$ is a maximal UEB map and 
the proof is complete. 

To prove the implication (ii) $\implies$ (iii), observe that since $\Phi$ is a $C^*$-extreme 
UEB map, by the (proved) equivalence of statements (i) and (ii) and by an application 
of Lemma \ref{lem:maximal-imp-commrang}, $\Phi$ has commutative 
range. That $\Phi$ is a linear extreme UEB map follows from an easy and direct  
 adaptation of Theorem 2.2.2 in \cite{Z} to UEB maps.

Finally, to prove the implication (iii) $\implies $ (i), observe that since $\Phi$ is a 
UEB map with commutative range,  Lemma \ref{lem:comm-range} implies that 
\[
\Phi(X)=\sum_{i=1}^k\phi_i(X)P_i,
\]
for some $k \in \N$, where the $\phi_i$'s are distinct
 states on $\cS$ and the $P_i$'s are mutually orthogonal projections in 
$B(H)$ satisfying $\sum_{i=1}^k P_i=I_H$. 
By appealing to Theorem \ref{thm:max-iff-stdform}, it 
suffices to show that $\phi_i$ is a linear extremal state for each $i$.
 Fix $i\in \{1, \dots, k\}$. Let $\sigma$ and $\tau$ be states defined on $\cS$ such that 
\[
\phi_i=t \sigma +(1-t)\tau,
\]
for some $t\in (0,1)$. It is enough to show that $\sigma=\tau$. 
Observe that \begin{align*}
\Phi(X)= t\left( \sigma(X)P_i+ \sum_{j\neq i}\phi_j(X)P_j\right)+(1-t)\left(\tau(X)P_i+ \sum_{j\neq i}\phi_j(X)P_j\right),
\end{align*}
for all $X\in \cS$. Define the linear maps $\Psi, \Gamma:\cS \rightarrow B(H)$ by 
\[
\Psi(X) = \sigma(X)P_i+ \sum_{j\neq i}\phi_j(X)P_j \text{ and }
\Gamma(X) = \tau(X)P_i+ \sum_{j\neq i}\phi_j(X)P_j.
\] Observe that 
$\Psi, \Gamma \in $ UEB$(\cS, B(H))$ and 
$\Phi(X)= t \Psi(X)+(1-t)\Gamma(X)$, for all $X\in \cS$. 
 From the assumption that $\Phi$ is linear extreme,
it follows that $\Psi = \Gamma$. This in turn implies that $\sigma= \tau$ 
and the proof is complete. \qedhere
\end{proof}

\begin{remark}
The following remarks concern the above proof of Theorem \ref{thm:max-iff-C*ext}.
\begin{itemize}
\item[(i)] The proof of the implication $(i) \Rightarrow (ii)$ in the above theorem works equally well
 for UCP maps or even just unital positive maps. 
\item[(ii)] In the proof of the implication $(ii) \implies (i)$, it is assumed that $T_1$ is not invertible and 
hence the appearance of the zero block in the definition of $Y_1$. If the operator $T_1$ is invertible, 
even then the proof works just fine by letting $Y_1 = Z_1$, $Y_0 = Z_0$ and by deleting the last row 
and column in the coefficient matrices occuring in equations 
\eqref{eq:matrix-one} and \eqref{eq:matrix-two}.
\end{itemize}
\end{remark}

%

\begin{proof}[Proof of Corollary \ref{cor:extn}]
Since $\Phi$ is a $C^*$-extreme point in UEB$(\cS, B(H))$, it follows from 
Theorems \ref{thm:max-iff-C*ext} and \ref{thm:max-iff-stdform} that
$
\Phi(X)= \sum_{i=1}^\ell \phi_i(X)P_i,
$
where the $\phi_i$'s are linear extremal states on $\cS$ and the 
$P_i$'s are orthogonal projections in $B(H)$ such that $\sum_{i=1}^\ell P_i=I_H$.
 By \cite[Proposition 1.2.4]{Z} there exist linear extremal states  $\psi_i:B(E) \to \C$
 such that $\psi_i\big|_\cS=\phi_i$, for each $i$. Define $\Psi:B(E) \to B(H)$ by 
$
\Psi(X)= \sum_{i=1}^\ell \psi_i(X)P_i.
$
Observe that $\Psi\big|_\cS=\Phi$. It follows from Theorems \ref{thm:max-iff-stdform} 
and \ref{thm:max-iff-C*ext} that $\Psi$ is a $C^*$-extreme UEB extension of $\Phi$.
\end{proof}

The following example shows 
 that the converse of Corollary \ref{cor:extn} is not true in general,
i.e., the restriction of a $C^*$-extreme UEB map on an 
operator system need not always be $C^*$-extreme.  
Recall that for states defined on operator systems, $C^*$-extremality 
coincides with linear extremality. Let $\cS \subset M_2(\C)$ denote 
the operator system  
\[
\cS:=\left\lbrace \begin{pmatrix}
a & b \\
c & a
\end{pmatrix} \in M_2(\C) : a,b,c \in \C\right\rbrace.
\]
Define $\phi: \cS \to \C$ by $\phi(X)= \trace(XE)$ for all $X\in \cS$, where $E= \begin{pmatrix}
1/2 & 0 \\
0 & 1/2
\end{pmatrix}$. Then $\phi$ is a state on $\cS$. In particular, $\phi$ is a UEB map. 
Define the (distinct) states $\phi_1, \phi_2 : \cS \to \C$ by 
$\phi_i(X)= \trace(XF_i)$,  for all $X\in \cS$, i = 1, 2, 
where $F_1= \begin{pmatrix}
1/2 & 1/2\\
1/2 & 1/2
\end{pmatrix}$ and $F_2= \begin{pmatrix}
1/2 & -1/2\\
-1/2 & 1/2
\end{pmatrix}$.
It follows that 
 \begin{align*}
\phi(X) &= \trace\left(X \begin{pmatrix}
1/2 & 0 \\
0 & 1/2
\end{pmatrix}\right)
= \frac{1}{2} \trace(X(F_1+F_2)) \\
& = \frac{1}{2} \left(\phi_1(X)+\phi_2(X)\right).
\end{align*}
This in turn implies that $\phi$ is not a linear extremal state and hence 
 not a $C^*$-extreme state on $\cS$. Let $\psi:M_2 \to \C$  be the state defined by 
$\psi(Y)= \trace(YE_{11})$,  where 
$E_{11}= 
\begin{pmatrix}
1 & 0\\
0 & 0
\end{pmatrix}$. Since $E_{11}$ is a projection of rank one, it is well-known that 
$\psi$ has to be a linear extremal state (See \cite{St} and \cite{Wa})
or equivalently a $C^*$-extremal state on $M_2$. 
Finally, observe that $\psi|_{\cS}=\phi$. \\

We end this section with a proof of Corollary \ref{cor:kre-mil-for-ueb}, which is 
a Krein-Milman type theorem for UEB maps. 
\begin{proof}[Proof of Corollary \ref{cor:kre-mil-for-ueb}]
 Recall the notations $\cE$ and $\mathcal K$ from Lemma \ref{lem:ueb}. It follows 
from Lemma \ref{lem:ueb} that 
$\cE = $ UEB($\cS, B(H))$. By Theorems \ref{thm:max-iff-stdform} and 
\ref{thm:max-iff-C*ext}, it follows that each element of $\cK$ is a $C^*$-extreme 
UEB map in UEB($\cS, B(H))$. This completes the proof. 
\end{proof}

\section{$C^*$-extreme UEB maps between Matrix algebras} 
\label{S:matalg}
%
This section contains an improved version of \cite[Theorem 5.3]{BDMS}, 
which includes various characterizations of a $C^*$-extreme UEB map 
between matrix algebras (See Theorem \ref{thm:improv}).
Recall the convention~\ref{assump:main} and the definitions of the Choi-rank, 
EB rank, Schmidt rank and Schmidt number 
from Section \ref{sec:intro}. For a positive matrix 
$X = \sum_{i=1}^\ell A_i \otimes B_i \in 
B(E \otimes H)$, \df{the partial trace of $X$ with respect to 
the first coordinate}, is denoted by $\trace_1(X)$ and 
is defined as 
\[
\trace_1(X) = \displaystyle \sum_{i=1}^\ell \trace(A_i)B_i \in B(H).\\
\]
The following lemma is a minor variant of \cite[Lemma 8]{HSR}.
\begin{lemma}
\label{lem:spectral-decomp-choi}
Let $\Psi:M_d \rightarrow M_n$ be an EB map and $C_{\Psi} = 
\sum_{j=1}^m \xi_j\xi_j^*$ with $SR(\xi_j)=1$ for each $j \in \{1,2,\dots,n\}$. If the Choi-rank of $\Psi = n$ and 
$\trace_1(C_{\Psi}) = \Psi(I_d) \in M_n$ is invertible, then $m \ge n$ and
 there exists $\gamma_1,\dots,\gamma_n\in \C^d \otimes \C^n$ with $SR(\gamma_k) = 1$ such that 
$C_{\Psi} = \sum_{k=1}^n \gamma_k \gamma_k^*$. 
\end{lemma}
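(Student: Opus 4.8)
The plan is to pass through the Choi--Jamio\l kowski correspondence, turning the statement into one about product--vector decompositions, then dispose of the inequality $m \ge n$ by a rank count and construct the $n$--term decomposition by reducing to the unital case and peeling off product vectors one at a time.

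First I would record the dictionary supplied by Theorems~\ref{thm:cpmaps-choikraus} and \ref{thm:ebmaps}. Writing each product vector as $\xi_j = \bar x_j \otimes z_j$ with $x_j \in \C^d$ and $z_j \in \C^n$, the separable decomposition $C_\Psi = \sum_{j=1}^m \xi_j\xi_j^*$ is precisely a rank--one Choi--Kraus (equivalently Holevo) representation $\Psi(X) = \sum_{j=1}^m (x_j^* X x_j)\, z_j z_j^*$. In this language the hypothesis that $\trace_1(C_\Psi)=\Psi(I_d)$ is invertible becomes the statement that $\sum_j \|x_j\|^2 z_j z_j^*$ is invertible, i.e.\ that the second components $\{z_j\}$ span $\C^n$; this is the form in which I expect to use it. I would also use the standard identity $\text{Choi--rank} = \operatorname{rank}(C_\Psi)$ (from a spectral decomposition of $C_\Psi$ together with Theorem~\ref{thm:cpmaps-choikraus}), so that the hypothesis Choi--rank $=n$ reads $\operatorname{rank}(C_\Psi)=n$.

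The inequality $m\ge n$ is then immediate: $\range(C_\Psi)$ is the span of $\xi_1,\dots,\xi_m$ and has dimension $\operatorname{rank}(C_\Psi)=n$, so at least $n$ of the $\xi_j$ are needed, and one may select $n$ linearly independent product vectors among them forming a basis of $\mathcal{R}:=\range(C_\Psi)$. For the existence of an $n$--term product decomposition I would first reduce to the unital case: since $R:=\Psi(I_d)$ is invertible, conjugating by $R^{-1/2}$ replaces $\Psi$ by the UEB map $X\mapsto R^{-1/2}\Psi(X)R^{-1/2}$, whose Choi matrix is $(I\otimes R^{-1/2})C_\Psi(I\otimes R^{-1/2})$; this preserves separability, the rank, and the product structure (a decomposition $\gamma_k=\bar u_k\otimes w_k$ downstairs transfers back to $\bar u_k\otimes R^{1/2}w_k$), so it is harmless. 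With $\trace_1(C_\Psi)=I_n$ fixed, I would then try to reach exactly $\operatorname{rank}(C_\Psi)=n$ product terms by an inductive peeling argument in the spirit of \cite[Lemma~8]{HSR}: as long as the current separable decomposition has more than $\operatorname{rank}$ terms, produce a single product vector $\gamma\in\mathcal{R}$ with $\gamma\gamma^*\le C_\Psi$ that drops the rank by one and leaves a residual $C_\Psi-\gamma\gamma^*$ that is again separable. Iterating, and using that the second components span $\C^n$ to keep the marginal nondegenerate, the process terminates with product vectors $\gamma_1,\dots,\gamma_n$ satisfying $C_\Psi=\sum_{k=1}^n\gamma_k\gamma_k^*$; transferring back through conjugation by $R^{1/2}$ yields the $\gamma_k$ with $SR(\gamma_k)=1$ for the original $\Psi$.

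The main obstacle is precisely the peeling step. A bare application of Carath\'eodory's theorem in the real space of Hermitian operators supported on $\mathcal{R}$ only bounds the number of product terms by $(\dim\mathcal{R})^2=n^2$, not by $n$, so the sharp count must genuinely exploit the hypotheses. The delicate point is to guarantee that after subtracting a well--chosen boundary product vector (one with $\langle C_\Psi^{\dagger}\gamma,\gamma\rangle=1$, so that the rank really drops) the residual remains \emph{separable} rather than merely positive. This is exactly where the equality $\operatorname{rank}(C_\Psi)=n$ together with the invertibility of $\Psi(I_d)$ — equivalently, the spanning of the second components $\{z_j\}$ — is essential, and it is the part that most closely follows, and must be adapted from, the argument for \cite[Lemma~8]{HSR}.
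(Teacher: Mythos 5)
Your setup is fine: the identification of the hypotheses with a rank-one Choi--Kraus form, the identity $\mathrm{Choi\text{-}rank}=\operatorname{rank}(C_\Psi)$, and the resulting inequality $m\ge n$ all match the paper, and the reduction to the unital case via conjugation by $R^{-1/2}$ is harmless (if unnecessary). But the heart of the lemma --- passing from $m$ product terms to exactly $n$ product terms --- is exactly the step you label ``the main obstacle'' and then leave unresolved. Producing a single product vector $\gamma$ with $\gamma\gamma^*\le C_\Psi$, $\langle C_\Psi^{\dagger}\gamma,\gamma\rangle=1$, and a \emph{separable} residual $C_\Psi-\gamma\gamma^*$ is precisely what needs an argument, and deferring it to ``an adaptation of \cite[Lemma~8]{HSR}'' is not a proof. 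As written, the proposal has a genuine gap.

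The mechanism the paper actually uses avoids peeling entirely and is worth contrasting with your plan. Writing $\xi_j=x_j\otimes y_j$ with $\|x_j\|=1$, the invertibility of $\trace_1(C_\Psi)=\sum_j y_jy_j^*$ lets one relabel so that $y_1,\dots,y_n$ form a basis of $\C^n$ (note: this is a more specific choice than your ``any $n$ linearly independent $\xi_j$''). Then $\xi_1,\dots,\xi_n$ are linearly independent, hence a basis of $\range(C_\Psi)$, so $\xi_{n+1}=\sum_{j=1}^n\alpha_j\xi_j$. Applying $I\otimes f$ for $f$ ranging over the dual basis of $\{y_j\}$ forces $x_j=\lambda_jx_{n+1}$ with $|\lambda_j|=1$ whenever $\alpha_j\ne 0$; consequently all terms with $\alpha_j\ne0$, together with $\xi_{n+1}\xi_{n+1}^*$, share the common first factor $x_{n+1}x_{n+1}^*$, and their second factors plus $y_{n+1}y_{n+1}^*$ live in an $(n-r)$--dimensional space, so a spectral re-diagonalization in the second tensor slot rewrites those $n-r+1$ terms as $n-r$ product terms. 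This \emph{merges} $n+1$ product terms into $n$ without ever subtracting anything, so no question of the residual's separability arises; iterating $m-n$ times finishes the proof. If you want to salvage your outline, this ``parallel first components'' observation is the missing ingredient you would have to supply.
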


\begin{proof}
Since the Choi-rank of $\Psi$ equals the rank of the Choi matrix $C_\Psi$ (See \cite{Wa} and \cite{St}), it is clear that $m \ge n$.
 Suppose that $m > n$ and $C_{\Psi} = \sum_{j=1}^m \xi_j \xi_j^*$ 
where $SR(\xi_j) = 1$ for all $1 \le j \le m$.  For each $1 \le j \le m$, let $\xi_j = 
x_j \otimes y_j$ with $\|x_j\|=1$. By hypothesis, $\Psi(I_d) = \trace_1(C_{\Psi}) = 
 \sum_{j=1}^m \trace(x_jx_j^*) y_jy_j^* =  \sum_{j=1}^m y_jy_j^*$ is invertible. This implies that 
Span$\{y_1,\dots,y_m\} = \C^n$. Without loss of generality, assume that 
$\{y_1,\dots,y_n\}$ is a basis for $\C^n = \range(\trace_1(C_{\Psi}))$. 
 Note that 
 $B=\{\xi_j\,:\, 1 \le j \le n\}$ is linearly independent.  By the Douglas range 
inclusion property (\cite[Theorem 1]{D}) and due to the hypothesis that the rank of $C_{\Psi}=n$,
 it follows that $\{\xi_j\,:\, 1 \le j \le m\} \subset \range(C_{\Psi})$ and that 
 $B$ is in fact a basis for $\range(C_{\Psi})$. 
Consider the sum $\sum_{j=1}^{n+1 }\xi_j \xi_j^*$. 
There exists $\alpha_j$ such that $\xi_{n+1} = \sum_{j=1}^n \alpha_j \xi_j$. 
It follows that 
$x_j = \lambda_j x_{n+1}$, for some scalars $\lambda_j$, 
whenever $\alpha_j \neq 0$, $1 \le j \le n$.  Thus
$|\lambda_j| = 1$ (since the $x_j$'s are unit vectors) 
and 
\[
x_{n+1}\otimes y_{n+1}= \xi_{n+1} = \sum_{j:\alpha_j\ne 0} \alpha_j \xi_j 
  = \sum_{j:\alpha_j\ne 0} x_{n+1}\otimes (\lambda_j \alpha_j) y_j.
\]
Consequently,
$y_{n+1} = \displaystyle \sum_{j: \alpha_j \neq 0} (\lambda_j\alpha_j) y_j,$ and  
\begin{align}
\notag
\sum_{j=1}^{n+1 }\xi_j \xi_j^* &=  x_{n+1}x_{n+1}^* \otimes \left( \left( \sum_{j:\alpha_j \neq 0} 
y_jy_j^* \right)  + y_{n+1}y_{n+1}^* \right) \\
\label{eq:part-sum}
& + \sum_{j:\alpha_j = 0} x_jx_j^* \otimes y_jy_j^*.
\end{align}

Let $r$ denote the cardinality of the set $\{j:\alpha_j=0\}$. 
Since $\{y_j:\alpha_j \neq 0\}$ is linearly independent and   
 $y_{n+1} \in \operatorname{span}\{y_j : \alpha_j \neq 0\}$, 
\[
\sum_{j:\alpha_j \neq 0}  y_jy_j^* + y_{n+1}y_{n+1}^* = 
\sum_{k=1}^{n-r}w_kw_k^*,
\]
for some non-zero vectors $w_k \in \operatorname{span}\{y_j: \alpha_j\ne 0\}\subseteq \C^n.$ 
Thus,
\[
\sum_{j=1}^{n+1} \xi_j \xi_j^* = \sum_{k=1}^{n-r} \eta_{k}\eta_{k}^* + \sum_{j:\alpha_j=0} \xi_j\xi_j^*,
\]
where $\eta_k = x_{n+1} \otimes w_k$.
Note that the right hand side is a sum consisting of exactly $n$ summands that are vectors in
 $\C^d \otimes \C^n$ 
with Schmidt rank equal to one. Repeat the above process, by adding the $(n+2)^{th}$ term of 
$C_{\Psi}$ to $\sum_{j=1}^{n+1} \xi_j \xi_j^* $. Continuing this way, 
after a finite number of iterations, one can conclude that $C_{\Psi}$ is of the desired form, i.e.,  
$C_\Psi =\sum_{k=1}^n \gamma_k\gamma_k^*$ with $\gamma_k \in \C^d \otimes \C^n$ and 
$SR(\gamma_k) = 1$. This completes the proof. 
\end{proof}
\begin{theorem}
\label{thm:improv}
Let $\Phi:M_d \to M_n$ be a UEB map. The following statements are equivalent.
\begin{itemize}
\item[(i)]  $\Phi$ is a $C^*$-extreme UEB map.
\item[(ii)] $\Phi$ is a linear extreme UEB map with commutative range.
\item[(iii)] $\Phi$ is a maximal UEB map.
\item[(iv)] The map $\Phi$ has the form
$\Phi(X)= \sum_{i=1}^{\ell} \phi_i(X)P_i,$
where the $\phi_i$'s are distinct linear extremal states defined on $M_d$ and the $P_i$'s 
are mutually orthogonal projections in $M_n$ satisfying $\sum_{i=1}^{\ell} P_i=I_n$.
\item[(v)] The Choi-rank of $\Phi$ is $n$.
\item[(vi)]
If $C_{\Phi} = \sum_{j=1}^m \xi_j\xi_j^*$ with $SR(\xi_j)=1$ for
each $j \in \{1,2,\dots,m\}$, then $m \ge n$ and there exists $\gamma_1,\dots,\gamma_n \in \C^d \otimes \C^n$ 
with $SR(\gamma_j) = 1$ such that $C_{\Phi} = \sum_{j=1}^n \gamma_j \gamma_j^*$.
\item[(vii)] 
 If $\Phi(X) = \sum_{i=1}^k V_i^*XV_i$ and 
rank of $V_i$ is one for each $i \in \{1,2,\dots,k\}$, then $k \ge n$ and 
there exists matrices
$W_1,\dots,W_n$ of rank one such that 
$\Phi(X) = \sum_{i=1}^n W_i^*XW_i$.
\item[(viii)] The EB-rank of $\Phi$ is $n$.
\end{itemize}
\end{theorem}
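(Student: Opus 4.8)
The plan is to leverage the operator-system results already proved and then graft on the Choi-matrix bookkeeping special to the full algebra $\cS = B(E) = M_d$. First I would dispose of the equivalences among $(i)$, $(ii)$, $(iii)$ for free: specializing Theorem~\ref{thm:max-iff-C*ext} to $\cS = B(E)$ shows these three are mutually equivalent (its clauses are exactly maximal, $C^*$-extreme, and linear extreme with commutative range), while Theorem~\ref{thm:max-iff-stdform} identifies maximal with the explicit form $(iv)$. Hence all of $(i)$--$(iv)$ are equivalent with no new work, and the entire content beyond \cite{BDMS} is to tie the rank conditions $(v)$--$(viii)$ into this chain.

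The forward bridge $(iv) \Rightarrow (v),(vi),(vii),(viii)$ I would prove by a single computation of the Choi matrix of the standard form. Writing each extremal state as a vector state $\phi_i(X) = \langle X u_i, u_i\rangle$ with $\|u_i\|=1$ and each $P_i = \sum_s f_{i,s} f_{i,s}^*$ via an orthonormal basis of its range, one finds
\[
C_\Phi = \sum_i (\bar u_i \bar u_i^*) \otimes P_i = \sum_i \sum_{s=1}^{\rankk(P_i)} (\bar u_i \otimes f_{i,s})(\bar u_i \otimes f_{i,s})^*,
\]
a sum of exactly $\sum_i \rankk(P_i) = n$ Schmidt-rank-one vectors. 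Since the second factors $f_{i,s}$ are mutually orthogonal (orthonormal within a block, and distinct blocks live in the orthogonal ranges of the $P_i$), these $n$ product vectors are orthogonal and nonzero, so $\rankk(C_\Phi)=n$; as $\rankk(C_\Phi)$ is the Choi-rank this gives $(v)$, and reading the same display as a Schmidt-rank-one decomposition (resp.\ via the rank-one Kraus operators attached to $\bar u_i \otimes f_{i,s}$) yields the existence halves of $(vi)$, $(vii)$, $(viii)$.

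For the equivalences internal to $(v)$--$(viii)$ I would first record a uniform lower bound: for any Schmidt-rank-one decomposition $C_\Phi = \sum_{j=1}^m (x_j\otimes y_j)(x_j\otimes y_j)^*$, unitality forces $\sum_j \|x_j\|^2 y_j y_j^* = \trace_1(C_\Phi) = \Phi(I_d) = I_n$, so $\operatorname{span}\{y_j\} = \C^n$; choosing $n$ independent $y_j$ with nonzero $x_j$ makes the corresponding product vectors independent, whence $\rankk(C_\Phi) \ge n$ and $m \ge n$. The passages $(vi) \iff (vii)$ and $(vii) \iff (viii)$ are then bookkeeping: the correspondence between Schmidt-rank-one vectors in $\C^d \otimes \C^n$ and rank-one Choi-Kraus operators (Theorems~\ref{thm:cpmaps-choikraus} and~\ref{thm:ebmaps}) matches decompositions term for term and preserves the count, and EB-rank is by definition the minimal number of rank-one Kraus operators, so $(viii)$ is $(vii)$ restated. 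Finally $(v) \Rightarrow (vi)$ is exactly Lemma~\ref{lem:spectral-decomp-choi} (applicable since $\Phi$ is UEB, so $\Phi(I_d)=I_n$ is invertible and the Choi-rank is $n$), and $(vi) \Rightarrow (v)$ follows because an $n$-term decomposition caps $\rankk(C_\Phi)$ at $n$ while the lower bound pins it at $n$.

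I would close the loop with $(vi) \Rightarrow (iv)$. Given $C_\Phi = \sum_{k=1}^n (x_k \otimes y_k)(x_k \otimes y_k)^*$ with $x_k \neq 0$, the induced rank-one Kraus form reads $\Phi(X) = \sum_{k=1}^n \langle X\hat x_k,\hat x_k\rangle\, \tilde y_k \tilde y_k^*$, where $\hat x_k := x_k/\|x_k\|$ and $\tilde y_k := \|x_k\| y_k$. The decisive point, and the step I expect to be the \textbf{main obstacle}, is that unitality gives $\sum_{k=1}^n \tilde y_k \tilde y_k^* = \Phi(I_d) = I_n$, so the square matrix $[\tilde y_1\,\cdots\,\tilde y_n]$ is a coisometry, hence unitary, forcing $\{\tilde y_k\}$ to be an \emph{orthonormal} basis of $\C^n$. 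Thus $P_k := \tilde y_k \tilde y_k^*$ are mutually orthogonal rank-one projections summing to $I_n$ and $\Phi(X) = \sum_k \langle X\hat x_k,\hat x_k\rangle P_k$; merging the blocks on which the extremal vector states coincide produces exactly form $(iv)$, converting the numerical rank condition back into the maximal structure and completing the cycle. The delicate points throughout are keeping the transpose/conjugation conventions in $C_\Phi$ consistent and justifying that ``exactly $n$ terms'' rigidly upgrades a frame-type resolution of the identity to an orthonormal basis.
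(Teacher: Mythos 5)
Your proposal is correct and follows essentially the same route as the paper: the equivalence of $(i)$--$(iv)$ by specializing Theorems~\ref{thm:max-iff-C*ext} and~\ref{thm:max-iff-stdform} to $\cS=B(E)$, the Choi-matrix computation of the standard form for $(iv)\Rightarrow(v)$, Lemma~\ref{lem:spectral-decomp-choi} for $(v)\Rightarrow(vi)$, the spectral/Kraus correspondence for $(vi)$--$(viii)$, and the square-coisometry-is-unitary rigidity argument to close the loop (the paper runs it from $(viii)$ rather than $(vi)$, an immaterial difference).
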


\begin{proof}
The equivalence of $(i), (ii)$ and $(iii)$ follows by letting $\cS = B(E)$ in
 Theorem \ref{thm:max-iff-C*ext}

The equivalence $(iii) \iff (iv)$ follows by letting  
$\cS=B(E)$ in Theorem \ref{thm:max-iff-stdform}. 

To prove $(iv) \implies (v), $ rewrite $\Phi(X) = \sum_{k=1}^n \psi_k(X) w_kw_k^*$,
 where $\psi_k \in \{\phi_i\,:\, 1 \le i \le \ell\}$ and $\{w_k\,:\, 1 \le k \le n\}$ is an orthonormal 
set of vectors satisfying $\sum_{k=1}^n w_kw_k^* = I_n$. For each $k \in \{1, 2, \dots, n\}$, 
observe that $\psi_k(X) = \trace(Xu_ku_k^*) = \langle X u_k, u_k \rangle$ for some unit vector $u_k$.
 Recall the Choi matrix $C_{\Phi} \in M_d \otimes M_n$. It follows that 
\begin{align*}
C_{\Phi} &= \sum_{i,j=1}^d e_ie_j^* \otimes \Phi(e_ie_j^*)
= \sum_{k=1}^n \sum_{i,j=1}^d e_ie_j^* \otimes  (u_k^*e_ie_j^*u_k) w_kw_k^*\\
              & =   \sum_{k=1}^n \left( \sum_{i=1}^d (u_k^*e_i) e_i \otimes w_k \right) 
 \left( \sum_{i=1}^d (e_i^*u_k) e_i^*  \otimes w_k^* \right) = \sum_{k=1}^n z_kz_k^*,
\end{align*}
where $z_k =  \sum_{i=1}^d (u_k^*e_i) e_i \otimes w_k $ . Since 
$C_{\Phi}$ is a sum of rank-one 
operators,  the rank of $C_{\Phi}$, i.e., the Choi-rank of $\Phi$ is at most $n$. 
Also since $C_{\Phi}$ is a sum of the positive rank-one operators $z_kz_k^*$, the 
Douglas range inclusion property (\cite[Theorem 1]{D})  implies that 
$\{z_k\,:\, 1 \le k \le n\}$ is contained in $\range(C_{\Phi})$. By the orthonormality of 
$\{z_k\,:\, 1 \le k \le n\} \subset \range(C_{\Phi})$ it follows that Choi-rank$(\phi) \ge n$.
 Thus in fact, the Choi-rank of $\Phi$ equals $n$.

Since $\Phi$ is unital, the implication $(v) \implies (vi)$ follows from the 
 observation that $\trace_1(C_{\Phi})=I_n$ and a direct application 
 of Lemma \ref{lem:spectral-decomp-choi}.

The implication $(vi) \implies (vii)$, is a direct consequence of the well-known fact
 (See \cite[Proposition 4.1.6]{St} and \cite[Theorem 3.1.1]{Bh} for instance),
 that every spectral decomposition of $C_{\Phi}$ as a sum of $\ell$ rank one positive
 matrices $\xi\xi^*$ with 
$SR(\xi) = 1$, yields a Choi-Kraus decomposition of $\Phi$ with exactly $\ell$ Choi-Kraus coefficients, 
each having rank one (and vice-versa).

The implication $(vii) \implies (viii)$ is immediate from the definition of the EB-rank of $\Phi$. 

The below proof of the implication $(viii) \implies (iv)$ is essentially the same as that given in 
\cite{BDMS}. We include it here for the sake of completeness. Let $\Phi(X) = \sum_{j=1}^n (v_ju_j^*) X (u_jv_j^*) $, 
where $\{u_1,\dots,u_n\} \subset \C^d$ are unit vectors and $\{v_1,\dots,v_n\} \subset \C^n$. 
It suffices to show that $\{v_1,\dots,v_n\}$ forms an orthonormal basis for $\C^n$ or 
equivalently, the matrix $W := \left( v_1 \, \vert \,  v_2 \, \vert  \cdots \vert \, v_n \right) \in M_n$ is unitary. 
This easily follows from the fact that $I_n = \Phi(I_d) = \sum_{j=1}^n v_jv_j^* = WW^*$. Let 
$\{u_{j_k}:1 \le k \le \ell\}$ be the distinct unit vectors among $u_1,\dots,u_n$. 
For $1 \le k \le \ell$, define the states $\phi_k$ on $M_d$ by $\phi_k(X) = \trace(Xu_{j_k})$. 
Rewrite $\Phi$ such that
\[
 \Phi(X) = \sum_{j=1}^{n} (v_ju_j^*) X (u_jv_j^*) = \sum_{k=1}^{\ell} \phi_k(X)P_k.
\]
It is easily seen that the $P_k$'s are mutually orthogonal projections satisfying 
$\sum_{k=1}^{\ell} P_k = I_n$. This completes the proof.
\end{proof}

\section{An example}
\label{s:eg}
This section contains an example of a $C^*$-extreme UEB map on an 
operator system. We begin by recalling some basic facts about states.
  Let $\cS \subset M_d$ be an operator system and 
$\psi:\cS \rightarrow \C$ be a state. By the Riesz representation 
theorem, there exists a unique $L \in \cS$ such that 
$\psi(X) = \trace(XL^*)$ for all $X \in \cS$.
Observe that $\trace(L) = 1$. Moreover, for each $X \in \cS$,  
\[
\trace(XL^*) = \psi(X) = \overline{\psi(X^*)} = \overline{\trace(X^*L^*)} = \trace(XL).
\]
By the uniqueness of the Riesz representative of $\psi$, $L^* = L$. We 
record these basic facts below. 

\begin{remark}
\label{rem:on-states}
Let $\cS \subset M_d$ be an operator system. 
If $\psi:\cS \rightarrow \C$ is a state and $L$ is the Riesz 
representative of $\psi$, then $\trace(L) = 1$ and $L = L^*$.
\end{remark}
\noindent
{\bf Example:} Let $\cS \subset M_3$ denote the operator system  
\[
\cS= \left\{\begin{pmatrix}
x & 0 & y\\
0 & x & z\\
u & v & w
\end{pmatrix} \,:\, x,y,z,u,v,w \in \C  \right\}.
\]
Define the linear map $\Phi: \cS \to M_n$ by 
\[
\Phi(X)= \phi(X)I_n,
\]
where $\phi: \cS \to \C$ is the linear functional 
\[
\phi(X)= \trace (XF),
\] 
and $F= \begin{pmatrix}
1/2 & 0 & 0\\
0 & 1/2 & 0\\
0 & 0 & 0
\end{pmatrix}$. 
Note that $\phi(X)$ is nothing but the $(1,1)$ entry of the matrix $X$. 
It is easily seen that $\phi$ is, in fact, a state on $\cS$. 
Since $\Phi$ is a unital positive map with commutative range, by Lemma \ref{lem:pos-to-eb},  
it follows that $\Phi$ is a UEB map on the operator system $\cS$.

Our goal is to show that $\Phi$ is, in fact, a $C^*$-extreme UEB map on $\cS$. 
By appealing to Theorems \ref{thm:max-iff-stdform} and \ref{thm:max-iff-C*ext}, 
it is enough to show that $\phi$ is a linear extremal state on $\cS$. To this end, let 
\[
\phi= \frac{1}{2} \left(\phi_1+\phi_2\right), 
\]
where $\phi_i: \cS \to \C$ are states on $\cS$, for $i=1,2$. It suffices to show that 
$\phi_1 = \phi_2$. By the Riesz Representation theorem, there exists unique $Y, Z \in \cS$ such that 
$\phi_1(X)= \trace(XY^*)$ and $\phi_2(X)= \trace(XZ^*)$ for all $X\in \cS$. 
Since $\phi_1$ and $\phi_2$ are states on $\cS$, by Remark \ref{rem:on-states},
it follows that $Y=Y^*$, $Z=Z^*$ and $\trace(Y) = \trace(Z) = 1$. By 
the uniqueness of the Riesz representative of the state $\phi$, it must be the case that 
$F= \frac12 (Y+Z)$.\\
Let $Y :=\begin{pmatrix}
a & 0 & b\\
0 & a & c\\
\bar{b} & \bar{c} & d
\end{pmatrix}$ and $Z := \begin{pmatrix}
r & 0 & s\\
0 & r & t\\
\bar{s} & \bar{t} & u
\end{pmatrix}$. 
Note that 
\begin{equation}
\label{eq:entries of Y and Z}
a+ r=1 \text{ and } b+s=c+t=d+u =0.
\end{equation}
Moreover, 
\begin{equation}
\label{eq:trace-and-entries}
2a+d=2r+u=1.
\end{equation}
Since $\phi_1$ and $\phi_2$ are states, it follows that $\phi_1(e_3e_3^*) = d \geq 0$ and 
$\phi_2(e_3e_3^*) = u \geq 0$, where $e_3^* = (0, 0, 1)$. Combining this with equations  
\eqref{eq:entries of Y and Z} and \eqref{eq:trace-and-entries} yields 
\begin{equation}
\label{eq:entries}
 u = d = 0 \text{ and } a = r = \frac12. 
\end{equation}
Fix $\lambda >1$. Suppose that $b,c \neq 0$.  Consider the matrices  
\[
V= \begin{pmatrix}
1 & 0 & \frac{-\lambda b}{|b|}\\
0 & 1 & 0\\
\frac{-\lambda \bar{{b}}}{|b|} & 0 & \lambda^4 
\end{pmatrix} 
\text{   and   } 
W= \begin{pmatrix}
1 & 0 & 0\\
0 & 1 & \frac{-\lambda c}{|c|}\\
0 & \frac{-\lambda \bar{{c}}}{|c|} & \lambda^4
\end{pmatrix} \in \cS.
\]
Observe that $V, W \in \cS^+$.
Since $\phi_1$ is a state on $\cS$, it follows that 
\begin{equation}
\label{eq:pos-of-phi-one-Z}
0 \leq \phi_1(V) = \trace(VY) = 1-2 \lambda |b| 
\end{equation} 
and 
\begin{equation}
\label{eq:pos-of-phi-one-W}
0 \leq \phi_1(W) = \trace(WY) = 1-2 \lambda |c|.
\end{equation}
Letting $\lambda \rightarrow \infty$ in equations \eqref{eq:pos-of-phi-one-Z} 
and \eqref{eq:pos-of-phi-one-W} yields contradictions. 
Thus $b = c = 0$. It follows from equation  
\eqref{eq:entries of Y and Z} that $s = t = 0.$ 
Thus $Y = Z = F$. Equivalently, $\phi_1 = \phi_2 = \phi$. \\

\noindent
\textbf{Acknowledgements:} The authors thank Professor Scott McCullough (UF) for several discussions and many helpful suggestions, Dr. Devendra Repana (IITM) for a clarification/suggestion on Lemma 6.1 and also for pointing us to some important references and Mr. Chinmay Ajay Tamhankar (IITM) for his comments/suggestions on Theorem 1.10. 
The authors also thank the anonymous referees for their insightful comments and suggestions.

\end{document}